\documentclass[a4paper, 11pt, parskip=half]{amsart}

\usepackage{custom-template, shortcuts}

\begin{document}
	\title{Distinguishing Siegel modular forms}
	\date{}
	\author{Arvind Kumar and Ariel Weiss}
	\address{Arvind Kumar, Department of Mathematics, Indian Institute of Technology Jammu, Jagti, PO Nagrota, NH-44 Jammu 181221, J \& K, India\vspace*{-3pt}}
	\email{arvind.kumar@iitjammu.ac.in\vspace*{-6pt}}
	\address{Ariel Weiss, Department of Mathematics, The Ohio State University, Columbus OH, USA.\vspace*{-3pt}}
	\email{weiss.742@osu.edu}
	\subjclass[2020]{Primary: 11F80, 11F46; Secondary: 11R45}
	\keywords{Siegel modular forms, Galois representations, strong multiplicity one}	
	\begin{abstract}

Let $f$ and $f'$ be genus $2$ cuspidal Siegel paramodular newforms. We prove that if their Hecke eigenvalues $a_p$ and $a_p'$ satisfy a non-trivial polynomial relation $P(a_p, a_p') = 0$ for a set of primes $p$ of positive density, then $f$ is a scalar multiple of a quadratic twist of $f'$. This result extends the strong multiplicity one theorem, which handles the case $P(x,y) = x - y$, to arbitrary polynomial relations.

Our proof analyses the image of the product Galois representation attached to the pair $(f, f')$: we show that this image is as large as possible, unless $f$ is a twist of $f'$.

Our results also apply to elliptic modular forms. They therefore provide a unified method for distinguishing both elliptic and Siegel modular forms based on their Hecke data, including their Hecke eigenvalues, Satake parameters, Sato--Tate angles, and the coefficients of their $L$-functions. We apply our methods to recover and generalise a range of existing results and to prove new ones in both the elliptic and Siegel settings.


	\end{abstract}
	
	\maketitle
	\section{Introduction}

Let $f$ and $f'$ be genus $2$, cuspidal Siegel paramodular newforms. Suppose that for almost all primes $p$, their Hecke eigenvalues $a_p$ and $a_p'$ satisfy a fixed polynomial relation $P(a_p, a_p') = 0$.
What is the relationship between $f$ and $f'$?

If $P(x, y) = x - y$, then this question is answered by the \emph{strong multiplicity one} theorem \cite{schmidt,schmidt_acta, farmer}, which states that $a_p = a_p'$ for almost all primes $p$ if and only if $f$ is a scalar multiple of $f'$. But for other polynomials, this conclusion fails. For example, a form and its quadratic twists have Hecke eigenvalues satisfying $a_p^2 = {a_p'}^2$ for all $p$, but are not scalar multiples.

In this paper, we show that twists are the only possible exceptions: if $a_p$ and $a_p'$ satisfy a non-trivial polynomial relation for a positive proportion of primes, then $f$ and $f'$ must be quadratic twists of each other by a Dirichlet character. This result can be viewed as a refinement of the strong multiplicity one theorem: even partial algebraic information about a form's Hecke eigenvalues is sufficient to determine it up to twist.

We now state our result more precisely and in slightly more generality. Let $\pi$ and $\pi'$ be cuspidal automorphic representations of $\Gf(\AQ)$ whose archimidean components are holomorphic discrete series or limit of discrete series representations with Blattner parameters $(k_1, k_2)$ and $(k_1', k_2')$. Denote their central characters by $\epsilon$ and $\epsilon'$, and their $T_p$-Hecke eigenvalues by $a_p$ and $a_p'$. Let $\Pi,\Pi'$ be the transfers of these representations to $\GL_4(\AQ)$. We assume that these transfers exist, are cuspidal, and are neither automorphic inductions nor symmetric cube lifts. These automorphic representations correspond to Siegel modular forms of general type (type \textbf{(G)} in the notation of \cite{schmidt}) with weights $(k_1, k_2)$ and $(k_1', k_2')$---i.e.\ weights $\Sym^{k_1 - k_2}\det^{k_2}$ and $\Sym^{k_1' - k_2'}\det^{k_2'}$---with $k_1\ge k_2\ge 2$ and $k_1'\ge k_2'\ge 2$. In particular, our results apply to non-lift, non CM/RM, type \textbf{(G)} paramodular newforms $f$, $f'$ of levels $K(N)$ and $K(N')$.

\begin{theorem}\label{cor:trace}
Let $P(a,a')\in \Qb[a, a']$ be any non-zero polynomial. Suppose that for a set of primes $p$ of positive upper density
\[P( a_p, a_p')=0.\] 
Then $(k_1, k_2) = (k_1', k_2')$ and there is Dirichlet character $\chi$ such that $\Pi\simeq\Pi'\otimes\chi$. Hence, if $f,f'$ are paramodular newforms then $f$ is a scalar multiple of a quadratic twist of $f'$.
\end{theorem}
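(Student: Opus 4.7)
The strategy is to analyse the image of the product Galois representation $\rho_f\times\rho_{f'}$ and apply Chebotarev equidistribution. Attach to $f$ and $f'$ their associated symplectic $\ell$-adic Galois representations $\rho_f,\rho_{f'}\colon G_{\mathbb{Q}}\to\operatorname{GSp}_4(\overline{\mathbb{Q}}_\ell)$ (Weissauer, Laumon, Taylor), normalised so that $\operatorname{tr}\rho_f(\operatorname{Frob}_p)=a_p$ and $\operatorname{tr}\rho_{f'}(\operatorname{Frob}_p)=a_p'$ for almost all $p$. The hypothesis becomes: the Frobenius class $(\rho_f(\operatorname{Frob}_p),\rho_{f'}(\operatorname{Frob}_p))$ lies in the proper Zariski-closed subset
\[
Z:=\{(g_1,g_2)\in\operatorname{GSp}_4\times\operatorname{GSp}_4\colon P(\operatorname{tr} g_1,\operatorname{tr} g_2)=0\}
\]
for a positive-density set of primes $p$.

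By Chebotarev, these Frobenius classes equidistribute in the image of $\rho_f\times\rho_{f'}$ with respect to Haar measure on a maximal compact subgroup of its Zariski closure, so the positive-density condition forces $Z$ to contain the Zariski closure of the image entirely. Now, under the hypotheses (general type, no CM, RM, or symmetric-cube origin), each individual $\rho_f$ has maximal image, i.e.\ Zariski closure equal to $\operatorname{GSp}_4$ modulo its similitude character; this is the Mumford--Tate-type big-image statement whose exceptions are precisely the excluded cases. If the Zariski closure of the \emph{product} were also maximal---equal to $\operatorname{GSp}_4\times\operatorname{GSp}_4$ modulo similitude characters---then $Z$ could not contain it, because $P$ is non-zero and the trace map is surjective. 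Hence the image of the product must be proper. Applying Goursat's lemma to the projective image in $\operatorname{PGSp}_4\times\operatorname{PGSp}_4$, and using that each individual projective image is full, one obtains an isomorphism $\bar\rho_f\simeq\bar\rho_{f'}$, which lifts to $\rho_f\simeq\rho_{f'}\otimes\chi$ for a finite-order Galois character $\chi$.

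By class field theory, $\chi$ corresponds to a Dirichlet character, and by strong multiplicity one for $\operatorname{GSp}_4$ we conclude $\pi\simeq\pi'\otimes\chi$. Comparing Hodge--Tate weights, which are invariant under finite-order twists, gives $(k_1,k_2)=(k_1',k_2')$. When $\epsilon=\epsilon'=1$, equality of central characters forces $\chi^2=1$, so $\chi$ is quadratic and $f$ is a scalar multiple of a quadratic twist of $f'$. The main obstacle is the individual big-image statement for genus-$2$ Siegel forms---essentially the Mumford--Tate conjecture in this setting---together with a careful classification showing that the CM, RM, and symmetric-cube cases excluded in the hypotheses really do exhaust the exceptional images. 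A secondary delicate point is verifying, in the Goursat step, that the resulting isomorphism of Galois representations genuinely descends to a Dirichlet-character twist of automorphic representations, rather than producing some more exotic isogeny-type correspondence.
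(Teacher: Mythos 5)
Your proposal follows essentially the same route as the paper: big individual images (Dieulefait, Dieulefait--Zenteno, Weiss), Goursat's lemma applied to the product representation, an algebraic Chebotarev argument, and then Hodge--Tate weights plus strong multiplicity one for $\GSp_4$ to descend to $\pi\simeq\pi'\otimes\chi$. Two steps are stated too loosely, however, and one of them hides a genuine issue. First, the Chebotarev step: a positive-density vanishing set does \emph{not} force $Z$ to contain the Zariski closure $\Gamma_\ell$ of the image ``entirely''; Rajan's algebraic Chebotarev theorem only forces $Z$ to contain some \emph{connected component} of $\Gamma_\ell$. This matters because the maximal possible image is not $\GSp_4\times\GSp_4$ ``modulo similitude characters'' in a vague sense but the fibre product $\Gkk=\{(\gamma,\gamma'):\simil(\gamma)^{\kappa}=\simil(\gamma')^{\kappa'}\}$, which is genuinely disconnected when $\gcd(\kappa,\kappa')>1$. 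So you must check that $P(\tr\gamma,\tr\gamma')$ does not vanish identically on any single component $\Gkk^{\zeta}$; the paper does this by exhibiting, for each component, a symplectic matrix with prescribed similitude and arbitrary characteristic polynomial. For your special case ($P$ involving only the traces) this works out, but ``$P$ is non-zero and the trace map is surjective'' is not a proof of the joint non-vanishing on each component.

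Second, in the Goursat step you assert that the isomorphism of projective representations lifts to $\rho_f\simeq\rho_{f'}\otimes\chi$ with $\chi$ of \emph{finite order}. A priori $\chi$ is just an $\ell$-adic character of $G_{\mathbb{Q}}$; to see that it is a Dirichlet character one compares Hodge--Tate weights of $\rho_\ell$ and $\rho_\ell'$ to conclude that $\chi$ is Hodge--Tate of weight $0$, hence a finite-order character. In your write-up the Hodge--Tate comparison is invoked only to get $(k_1,k_2)=(k_1',k_2')$, so the finiteness of $\chi$ is left unjustified; this is precisely the ``delicate point'' you flag at the end, and the paper resolves it with this weight-$0$ argument rather than anything more exotic. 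With these two repairs your argument coincides with the paper's proof (which deduces the theorem as the special case $P\in\overline{\mathbb{Q}}[a,a']$ of its Theorem 1.2, the coprimality hypothesis there being automatic for such $P$).
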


\begin{remark}\label{rem:twisting}
What we actually prove is that there exists a Dirichlet character $\chi$ such that $a_p = \chi(p)a_p'$ for all but finitely many primes $p$. By the strong multiplicity one theorem for $\GL_4$, it follows that $\Pi \simeq \Pi' \otimes \chi$. If $\pi$ and $\pi'$ have trivial central character, then $\chi$ must be quadratic (see \Cref{rem:quadratic}).

For general automorphic representations $\pi$ and $\pi'$, the conclusion $\Pi \simeq \Pi' \otimes \chi$ is the strongest possible, since the strong multiplicity one theorem is false for automorphic representations of $\Gf(\AQ)$. However, if $\pi$ and $\pi'$ arise from paramodular newforms $f$ and $f'$, then $f$ has the same $T_p$-Hecke eigenvalues as the quadratic twist $f' \otimes \chi$, constructed in \cite{jlr}. By the strong multiplicity one results of \cite{schmidt, schmidt_acta}, it follows that $f$ is a scalar multiple of $f' \otimes \chi$.
\end{remark}

More generally, we show that an algebraic relation among \emph{any} of the Hecke eigenvalues of a Siegel paramodular newform $f$ is sufficient to determine it up to twist. The Hecke algebra of $\pi$ is generated by $T(p)$,  $T(p^2)$, and the diamond operator $\Delta(p)$. Its $T(p)$-Hecke eigenvalues $a_p$ are the coefficients of its \emph{spinor $L$-function}
\begin{align}\label{eq:l-function}
   L_{\mathrm{spin}}(\pi, s) & =\prod_p L_{p,\mathrm{spin}}(\pi, s) = L(\epsilon, 2s-u+1)\sum_{n\ge 1}a_nn^{-s},
\end{align}
where $u = k_1 + k_2 -3$, and for $p$ an unramified prime, by \cite{andrianov}*{(5.64)} and \cite{brownkeaton}*{pp.~1499}, we have
\[L_{p,\mathrm{spin}}(\pi, s) =(1 - a_pp^{-s} + (a_p^2 -a_{p^2} -\epsilon(p)p^{u-1})p^{-2s} - a_p\epsilon(p)p^{u}p^{-3s} + \epsilon(p^{2})p^{2u}p^{-4s}))\ii.\]
Let
\begin{equation}\label{b_p}
b_p = \frac{a_p^2 -a_{p^2}}{\epsilon(p)p^{u}}-\frac1p-1.
\end{equation}
Then $b_p$ satisfies the equation
\[(b_p + 1)\epsilon(p)p^u = a_p^2 -a_{p^2} -\epsilon(p)p^{u-1}\]
and is the coefficient of the \emph{standard} $L$-function of $\pi$,
\[L_{\std}(\pi, s) = \sum_{n\ge 1}b_nn^{-s}.\]

We prove the following more general result, from which \Cref{cor:trace} follows immediately:
\begin{theorem}\label{thm:b_p}
				Let
		$P(s, s', a, b, a', b')\in  \Qb[s, \frac1s, s',\frac1{s'},a, b,a',b']$ be a non-zero polynomial. Assume that $P$ is coprime to $s^\kappa - {s'}^{\kappa'}$, where $\kappa,\kappa'$ are integers defined in \eqref{def:kappa}.
        Suppose that for a set of primes $p$ of positive upper density
\[P(\epsilon(p)p^{k_1 + k_2 - 3},\epsilon'(p)p^{k_1' + k_2' - 3},a_p, b_p,  a_p', b_p')=0.\] 
Then $(k_1, k_2) = (k_1', k_2')$ and there is Dirichlet character $\chi$ such that $\Pi\simeq\Pi'\otimes\chi$. Hence, if $f,f'$ are paramodular newforms, then $f$ is a scalar multiple of a quadratic twist of $f'$.
\end{theorem}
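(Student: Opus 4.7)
The plan is to translate the hypothesis into a statement about the image of the product Galois representation $\rho_f\times\rho_{f'}$, to prove a ``big image'' theorem for this product, and then invoke Chebotarev. Attach $\ell$-adic Galois representations $\rho_f,\rho_{f'}\colon G_{\mathbb{Q}}\to\mathrm{GSp}_4(\overline{\mathbb{Q}}_\ell)$, normalised so that the similitude character of $\rho_f$ sends $\mathrm{Frob}_p$ to $s=\epsilon(p^2)p^{k_1+k_2-3}$ and $\mathrm{tr}\,\rho_f(\mathrm{Frob}_p)=a_p$. Computing from the Euler factor given in the introduction, $b_p$ is the trace at $\mathrm{Frob}_p$ of the five-dimensional standard representation $\std\rho_f$, and similarly for $f'$. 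Thus the hypothesis says that the regular function
\[F(g,g'):=P\bigl(\mathrm{sim}(g),\mathrm{sim}(g'),\mathrm{tr}(g),\mathrm{tr}(\std g),\mathrm{tr}(g'),\mathrm{tr}(\std g')\bigr)\]
on $\mathrm{GSp}_4\times\mathrm{GSp}_4$ vanishes at $(\rho_f\times\rho_{f'})(\mathrm{Frob}_p)$ for a positive upper-density set of primes.

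Next let $H$ be the Zariski closure of the image of $\rho_f\times\rho_{f'}$ in $\mathrm{GSp}_4\times\mathrm{GSp}_4$. The exclusion of CM, RM, and symmetric-cube lifts should allow me to invoke existing big-image results to conclude that each projection of $H$ is open in $\mathrm{GSp}_4$, so that the identity component $H^\circ$ surjects onto $\mathrm{Sp}_4$ in each factor. Since $\mathrm{Sp}_4$ is simple and has trivial outer automorphism group, Goursat's lemma gives a clean dichotomy for closed connected subgroups of $\mathrm{Sp}_4\times\mathrm{Sp}_4$ surjecting onto each factor: either $H^\circ=\mathrm{Sp}_4\times\mathrm{Sp}_4$, or $H^\circ$ is the graph of an inner automorphism of $\mathrm{Sp}_4$. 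In the second case, combining with the similitude characters produces a global isomorphism $\rho_f\simeq\rho_{f'}\otimes\chi$ for some Dirichlet character $\chi$---the desired conclusion. Otherwise, $H$ is cut out precisely by the similitude relation $s^\kappa={s'}^{\kappa'}$ encoded in \eqref{def:kappa}.

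Finally, apply Chebotarev. Equidistribution of Frobenius in $H$ with respect to Haar measure forces the zero locus of $F$ to have positive measure, hence to contain an entire irreducible component of $H$. Under the big-image conclusion above this forces $F\equiv 0$ on all of $H$, so $P$ lies in the ideal generated by $s^\kappa-{s'}^{\kappa'}$. This contradicts the coprimality hypothesis, so $f$ must be a twist of $f'$. Once $\rho_f\simeq\rho_{f'}\otimes\chi$, comparing Hodge--Tate weights gives $(k_1,k_2)=(k_1',k_2')$, and when $\epsilon=\epsilon'=1$ comparison of central characters yields $\chi^2=1$.

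The hard part will be the big-image step. Even granted known open-image results for an individual paramodular newform of general type, ruling out graph subgroups in the product requires delicately using the CM, RM, and symmetric-cube exclusions to forbid ``accidental'' isomorphisms $\rho_f\simeq\rho_{f'}\otimes\chi$ with $\chi$ a Galois character that is not of Dirichlet type. A secondary subtlety is verifying that the only algebraic identity between the two similitude characters on Galois is the one defining $\kappa,\kappa'$---which should reduce to separately comparing the archimedean weights and the nebentype components of the similitude characters.
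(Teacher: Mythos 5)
Your overall strategy --- product Galois representation, Goursat dichotomy for the Zariski closure of its image, algebraic Chebotarev --- is exactly the paper's. One clarification on the Goursat half: in the graph case the twisting character $\chi$ is shown to be of Dirichlet type not via the CM/RM/symmetric-cube exclusions (those are only needed to make each individual projection Zariski-dense in $\Gf$), but by comparing Hodge--Tate weights: $\chi$ must have Hodge--Tate weight $0$, hence finite order, and the same comparison gives $(k_1,k_2)=(k_1',k_2')$. So the "hard part" you flag at the end is in fact handled by a weight argument, not by strengthening the big-image input.

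The genuine gap is in your final step. Chebotarev (in Rajan's algebraic form) correctly gives that the zero locus of $F$ contains an entire connected component of $H=\Gkk$. But (i) this does \emph{not} force $F\equiv 0$ on all of $H$: $\Gkk$ has $d=\gcd(\kappa,\kappa')$ components $\Gkk^{\zeta}$ indexed by $\zeta\in\mu_d$, they are not conjugate to one another, and $F$ can vanish identically on one without vanishing on the others; and (ii) even granting vanishing on a single component, the deduction ``so $P$ lies in the ideal generated by $s^{\kappa}-{s'}^{\kappa'}$'' (component-wise: $P$ divisible by $s^{\kappa/d}-\zeta{s'}^{\kappa'/d}$) is not automatic. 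It requires knowing that the map $(\gamma,\gamma')\mapsto(\simil\gamma,\simil\gamma',\tr\gamma,\tr\std\gamma,\tr\gamma',\tr\std\gamma')$ carries each component $\Gkk^{\zeta}$ onto a Zariski-dense subset of the corresponding hypersurface --- equivalently, that every triple $(s,a,b)$ with $s\neq 0$ is realised as the similitude, trace, and standard-trace of an element of $\Gf$. The paper proves exactly this in \Cref{lem:non-vanishing}, by exhibiting an explicit symplectic-similitude matrix with prescribed characteristic polynomial $x^4+ax^3+bx^2+asx+s^2$. Without this realisability statement the coprimality hypothesis on $P$ cannot be brought to bear, and the contradiction you aim for does not materialise.
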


We note that the coprimality condition on $P$ is necessary to rule out trivial polynomial relations that can arise from the weights and characters of $f$ and $f'$, in which case the conclusion is false.

This result strengthens the observation in \cite{pitale-book,farmer} on the strong multiplicity one theorem, where the authors note that it is ``remarkable'' that the eigenvalues $a_p$ alone suffice to determine a paramodular newform $f$, despite the fact that the Hecke algebra is not generated by $T(p)$. We show that an algebraic relation involving \emph{any} of the Hecke data for just a positive proportion of primes suffices to determine the form up to twist. 



\begin{remark}
Throughout this paper, we assume the automorphic representations $\pi$ and $\pi'$ have cuspidal functorial lifts to $\GL_4(\AQ)$. This assumption holds if one assumes Arthur’s endoscopic classification \cite{Arthur2013}, however, this classification depends on a number of unpublished results, of which a portion have appeared \cite{geetaibi,arthur-plug}.  When $k_2 >2$, these functorial lifts are known to exist unconditionally thanks to work of Weissauer \cite{Weissauersymplectic} and Asgari--Shahidi \cite{asgari-shahidi}. However, in the case $k_2 = 2$, the existence of such a lift depends on Arthur’s results. 
For further discussion, see \cite{calegari-blog}.
\end{remark}

In \Cref{sec:applications}, we demonstrate how \Cref{cor:trace,thm:b_p} can be used to prove a broad range of results that distinguish Siegel modular forms by their Hecke data. In addition to proving new results, we strengthen the results of \cites{kms, kumar-meher-shankhadar, wang}, and extend the scope of previous results from the elliptic modular setting to the setting of Siegel modular forms \cites{raj, Ramakrishnan-appendix, rajan-forms-squares,kulkarni,murty-pujahari, para, gun-murty-paul, para2,wong}. We emphasise that both theorems and their applications easily generalise to the case of elliptic modular forms, with analogous proofs (see \Cref{sec:modular}). In the next two sections, we outline the proof of \Cref{thm:b_p}.

\subsection{Galois representations attached to Siegel modular forms}

To prove \Cref{thm:b_p}, we reinterpret algebraic relations between Hecke eigenvalues as relations occurring in the images of Galois representations attached to Siegel modular forms.

For each prime $\l$, there exists a semisimple Galois representation
\[\rho_\l\:\Ga\Q\to\Gf(\Qlb)\]
associated to $\pi$. Let $S$ be the set of primes where $\pi$ is ramified. Then this representation is unramified outside $\l$ and $S$, and if $p\notin \{\l\}\cup S$, then the characteristic polynomial of $\rho_\l(\Frob_p)$ agrees with the Euler factor of $L_{\mathrm{spin}}(\pi, s)$ at $p$. In particular, we have
\[a_p = \Tr\rho_\l(\Frob_p),\]
\[\epsilon(p)p^{k_1 + k_2 -3}=\simil\rho_\l(\Frob_p) ,\]
and
\[b_p = \Tr\std\rho_\l(\Frob_p) = \frac{\tr\wedge^2\rho_\l(\Frob_p)}{\simil\rho_\l(\Frob_p)}-1 ,\]
where $\simil\:\Gf\to\GG_m$ is the similitude character, and $\std\:\Gf\to \PGSp_4\cong\SO_5\hookrightarrow\GL_5$ is the standard representation (via the exceptional isomorphism $\PGSp_4\cong\SO_5$).
We similarly define
\[\rho_\l'\:\Ga\Q\to\Gf(\Qlb)\]
attached to $\pi'$, and consider the product representation
\[R_\l = \rho_\l\times\rho_\l'\:\Ga\Q\to\Gf(\Qlb)\times\Gf(\Qlb).\]
The vanishing of a polynomial $P$ as in \Cref{thm:b_p} for a set of primes of positive density implies the vanishing of the function
\[(\gamma, \gamma')\mapsto P(\simil(\gamma), \simil(\gamma'), \tr(\gamma), \tr(\gamma'), \tr\std(\gamma), \tr\std(\gamma'))\]
on a large part of the image of $R_\l$. We prove \Cref{thm:b_p} by studying the image of $R_\l$.

\subsection{Images of Galois representations}

The similitude relation between $\rho_\l$ and $\rho_\l'$ ensures that the image of $R_\l$ lies inside the subgroup
\[\Gkk(\Qlb) = \set{(\gamma, \gamma')\in \Gf(\Qlb)\times\Gf(\Qlb) : \simil(\gamma)^{\kappa} = \simil(\gamma')^{\kappa'}},\]
where 
$\kappa$ and $\kappa'$ are the smallest positive integers such that 
\[(\epsilon(p)p^{k_1 + k_2 -3})^{\kappa} = (\epsilon'(p)p^{k_1' + k_2' -3})^{\kappa'}\]
for all primes $p$. Explicitly, we have
\begin{equation}\label{def:kappa}
(\kappa, \kappa') = \ord\br{\frac{\epsilon^{(k_1' + k_2'-3)/n}}{(\epsilon')^{(k_1 + k_2-3)/n}}}\cdot\br{\frac{k_1' + k_2'-3}{n},\frac{k_1 + k_2-3}{n}},
\end{equation}
where 
\[n=\gcd(k_1 + k_2 - 3, k_1' + k_2' -3).\]
We view $\Gkk$ as an affine algebraic group over $\Spec\Qb$: it is the fibre product of $\Gf$ with itself, fibred over the similitude maps $\simil^{\kappa}$ and $\simil^{\kappa'}$. In particular, we have
\[\Gkk = \Spec\O(\Gkk),\]
where $\O(\Gkk)$ is the ring of global functions of $\Gkk$. In \Cref{sec:gkk}, we give an explicit description of the $\Qb$-algebra $\O(\Gkk)$ and its connected components.

Recall that $\pi$ and $\pi'$ are of general type, and that neither has CM, RM, or is a symmetric cube lift. By work of Dieulefait \cite{Dieulefait2002maximalimages}, Dieulefait--Zenteno \cite{DZ}, and the second author \cite{weissthesis,weiss2018image}, the images of $\rho_\l$ and $\rho_\l'$ are generically large. In particular, there is a set of primes $\LL$ of Dirichlet density $1$ such that if $\l\in\LL$, then the Zariski closures of the images of $\rho_\l$ and $\rho_\l'$ in ${\Gf}_{/\Qlb}$ are $\Gf(\Qlb)$ (\Cref{cor:zariski-closed}). 

Let $\Gamma_\l$ denote the Zariski closure of the image of $R_\l$ in ${\Gkk}_{/\Qlb}$. Using the fact that both projections of $\Gamma_\l$ onto $\Gf(\Qlb)$ are surjective, we show that $\Gamma_\l$ is strictly contained in $\Gkk(\Qlb)$ if and only if $\pi$ and $\pi'$ are related. More precisely, we prove:

\begin{theorem}\label{thm:joint-large-image}
    Exactly one of the following two conditions holds:
    \begin{enumerate}
        \item For all primes $\l\in\LL$, we have $\Gamma_\l = \Gkk(\Qlb)$.
        \item There exists a Dirichlet character $\chi$ such that $\Pi \simeq \Pi'\tensor\chi$. 
    \end{enumerate}
\end{theorem}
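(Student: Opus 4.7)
My strategy is to apply a Goursat-type analysis to the Zariski closure $\Gamma_\l$ inside $\Gkk(\Qlb)\subseteq\Gf(\Qlb)\times\Gf(\Qlb)$, exploiting that for $\l\in\LL$ both projections of $\Gamma_\l$ onto $\Gf$ are surjective. Let $\Gamma_\l^\circ$ denote the identity component. Since $\Gf$ is connected, both projections $\Gamma_\l^\circ\to\Gf$ remain surjective, so Goursat's lemma for connected algebraic groups produces connected normal subgroups $N, N'\trianglelefteq\Gf$ and an isomorphism $\phi\:\Gf/N\stackrel{\sim}{\to}\Gf/N'$ such that $\Gamma_\l^\circ$ is the pullback of the graph of $\phi$.

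The next step is to classify the common quotient $Q = \Gf/N\cong\Gf/N'$. The reductive group $\Gf$ has only the trivial subgroup, the centre $Z\cong\GG_m$, the derived subgroup $\mathrm{Sp}_4$, and $\Gf$ itself as connected normal subgroups, yielding the non-trivial quotients $\Gf/\mathrm{Sp}_4\cong\GG_m$ (via $\simil$) and $\Gf/Z\cong\PGSp_4$. The similitude identification is already encoded by membership in $\Gkk$, so if $\Gamma_\l^\circ\subsetneq\Gkk^\circ$, then necessarily $N, N'\subseteq Z$ and the common quotient has $\PGSp_4$ as a factor.

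In this case $\phi$ induces an automorphism of $\PGSp_4$. Because the Dynkin diagram $C_2$ of $\mathrm{Sp}_4$ has no non-trivial symmetries, every automorphism of $\PGSp_4$ is inner, so after conjugating $\rho_\l'$ the projectivisations $\bar\rho_\l$ and $\bar\rho_\l'$ coincide on a finite-index subgroup of $\Ga\Q$. A standard projective-lifting argument then produces a continuous character $\chi_\l\:\Ga\Q\to\Qlb^\times$ with $\rho_\l\simeq\rho_\l'\otimes\chi_\l$. Comparing Hodge--Tate weights on both sides forces $\chi_\l$ to have Hodge--Tate weight $0$ (hence finite order), and simultaneously forces $(k_1, k_2) = (k_1', k_2')$. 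Finally, Brauer--Nesbitt together with the strong multiplicity one theorem for $\Gf$ upgrades this Galois-theoretic twist to the automorphic relation $\pi\simeq\pi'\otimes\chi$, where $\chi$ is the Dirichlet character corresponding to $\chi_\l$.

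The main obstacle will be handling the component group carefully: one must verify that any relation forced by the non-connectedness of $\Gamma_\l$ is already detected on $\Gamma_\l^\circ$, so that a strict containment $\Gamma_\l\subsetneq\Gkk$ indeed implies $\Gamma_\l^\circ\subsetneq\Gkk^\circ$. A secondary technical point is checking that the character $\chi_\l$ produced by projective lifting descends to a $\Q$-rational Dirichlet character, which should follow from the $\l$-adic compatibility of the families $\set{\rho_\l}_\l$ and $\set{\rho_\l'}_\l$.
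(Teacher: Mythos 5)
Your proposal follows essentially the same route as the paper: Goursat's lemma applied to $\Gamma_\l^\circ$ using the surjectivity of both projections, the dichotomy on whether $\Sp_4$ lies in the Goursat kernels (the first branch giving $\Gamma_\l^\circ=\Gkk^1(\Qlb)$ by connectedness, the second giving equal projective images), projective lifting to obtain $\rho_\l\simeq\rho_\l'\otimes\chi_\l$, and Hodge--Tate weight comparison plus strong multiplicity one to conclude. The one step you flag but leave open---that $\Gamma_\l^\circ=\Gkk^1(\Qlb)$ forces $\Gamma_\l=\Gkk(\Qlb)$---is resolved in the paper by a short lemma: the minimality of $(\kappa,\kappa')$ implies that the character $\epsilon^{2\kappa/d}(\epsilon')^{-2\kappa'/d}$ has exact order $d=\gcd(\kappa,\kappa')$, so the similitudes of the elements $R_\l(\Frob_p)$ realise every $\zeta\in\mu_d$ and hence $\im(R_\l)$ meets every connected component of $\Gkk$.
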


To deduce \Cref{cor:trace,thm:b_p}, it remains to relate the existence of a polynomial $P$ satisfying the vanishing hypothesis to the strictness of the inclusion $\Gamma_\l\sub\Gkk(\Qlb)$.

Let $\O(\Gkk)^{\Gkk}$ denote the subring of $\O(\Gkk)$ consisting of functions that are invariant under the action of $\Gkk$ on itself by conjugation. Note that if $P$ is a polynomial as in \Cref{thm:b_p}, then the function
\begin{equation}\label{eq:phi}
\phi\colon(\gamma, \gamma')\mapsto P(\simil(\gamma), \simil(\gamma'), \tr(\gamma), \tr(\gamma'), \tr\std(\gamma), \tr\std(\gamma'))
\end{equation}
is an element of $\O(\Gkk)^{\Gkk}$.

By combining \Cref{thm:joint-large-image} with Rajan's algebraic version of the Chebotarev density theorem \cite{raj}, we prove the following theorem:

\begin{theorem}\label{thm:galois}
    Let $\phi$ be an element $\O(\Gkk)^{\Gkk}$ that does not vanish on any connected component of $\Gkk$.
    Suppose that for a set of primes $p$ of positive upper density, we have
    \[\phi(R_\l(\Frob_p)) = 0.\]
Then there exists a Dirichlet character $\chi$ such that $\Pi\simeq\Pi'\otimes\chi$. Hence, if $f,f'$ are paramodular newforms, then $f$ is a scalar multiple of a quadratic twist of $f'$.
\end{theorem}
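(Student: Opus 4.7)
The plan is to argue by contrapositive, combining \Cref{thm:joint-large-image} with Rajan's algebraic Chebotarev density theorem. Suppose that no Dirichlet character $\chi$ satisfies $\pi \simeq \pi' \otimes \chi$. Then by \Cref{thm:joint-large-image}, for every $\l \in \LL$ we have $\Gamma_\l = \Gkk(\Qlb)$; equivalently, the Zariski closure of $R_\l(\Ga\Q)$ in $\Gkk_{/\Qlb}$ is the full group scheme. Fix one such $\l$.

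Because $\phi \in \O(\Gkk)^{\Gkk}$, the closed subscheme $V(\phi) \subset \Gkk$ cut out by the equation $\phi = 0$ is stable under the conjugation action of $\Gkk$ on itself. The vanishing hypothesis of the theorem translates to the assertion that $R_\l(\Frob_p) \in V(\phi)(\Qlb)$ for a set of primes of positive upper density. Rajan's theorem \cite{raj}, applied to the continuous semisimple $\l$-adic representation $R_\l$ and to the conjugation-invariant closed subscheme $V(\phi)_{/\Qlb} \subset \Gkk_{/\Qlb}$, then asserts that this set of primes has a well-defined natural density, equal to the proportion of connected components of $\Gkk_{/\Qlb}$ that are fully contained in $V(\phi)_{/\Qlb}$. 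Positive upper density therefore forces this proportion to be strictly positive, so at least one connected component of $\Gkk_{/\Qlb}$ is contained in $V(\phi)$.

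Using the explicit description of $\pi_0(\Gkk)$ carried out in \Cref{sec:gkk}, the connected components of $\Gkk_{/\Qlb}$ are in canonical bijection with those of $\Gkk$, and so $\phi$ vanishes identically on a connected component of $\Gkk$. This contradicts the hypothesis on $\phi$, completing the contrapositive. The final clause, that $\chi$ is quadratic when $\epsilon = \epsilon' = 1$, follows from \Cref{rem:twisting}. The main point requiring care is the interface with Rajan's theorem: one must verify that the density statement being invoked applies to the possibly disconnected group $\Gkk$ with components accounted for correctly over both $\Qb$ and $\Qlb$, and that ``positive upper density'' is sufficient input to extract a component-containment conclusion. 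Both points are essentially bookkeeping once the structure of $\Gkk$ from \Cref{sec:gkk} is in hand, so I do not anticipate a serious obstacle beyond setting up this dictionary cleanly.
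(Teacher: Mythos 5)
Your proposal is correct and follows essentially the same route as the paper: contrapositive via \Cref{thm:joint-large-image} to get $\Gamma_\l = \Gkk(\Qlb)$, then Rajan's theorem applied to the conjugation-stable vanishing locus of $\phi$, concluding that positive upper density would force $\phi$ to vanish on a whole connected component. The bookkeeping point you flag (descending to a finite extension $E/\Q_\ell$ over which the Zariski closure is still all of $\Gkk$ and the component decomposition is defined) is exactly how the paper handles the interface with Rajan's statement.
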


\Cref{cor:trace,thm:b_p} follow from \Cref{thm:galois}. Indeed, we show that the hypothesis in \Cref{thm:b_p} that $P$ is coprime to $s^\kappa - {s'}^{\kappa'}$ exactly ensures that the corresponding function $\phi$ defined in \eqref{eq:phi} does not vanish on any connected component of $\Gkk$.

\subsection{Results for a single Siegel modular form}

While our results concern relations between pairs of Siegel modular forms, they can also be used to prove results about the Hecke eigenvalues of a single Siegel modular form.

Suppose that $\pi$ is a cuspidal automorphic representation of $\Gf(\AQ)$ satisfying the hypotheses of \Cref{cor:trace}.

\begin{theorem}\label{thm:one-form}
        Let $P(s,a,b)\in\Qb[s,\frac1s,a,b]$ be a non-zero polynomial. Then the set of primes
    \[\set{p : P(\epsilon(p)p^{k_1 + k_2 - 3},a_p,b_p) = 0}\]
     has Dirichlet density $0$. 
\end{theorem}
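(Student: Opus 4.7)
The plan is to apply Rajan's algebraic Chebotarev density theorem \cite{raj} directly to the single Galois representation $\rho_\l$ of $f$, exploiting \Cref{cor:zariski-closed} to guarantee that its image is Zariski-dense in $\Gf(\Qlb)$ for $\l$ in a set of Dirichlet density one. This is a single-form analogue of the argument sketched in the introduction for \Cref{thm:galois}.

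Consider the class function
\[\phi(\gamma) = P(\simil(\gamma), \tr(\gamma), \tr\std(\gamma)) \in \O(\Gf)^{\Gf},\]
which is a regular function on $\Gf$ since $\simil$ is everywhere invertible, so Laurent polynomials in $\simil$ are defined. The first step is to verify that $\phi$ is nonzero. Since $\Gf$ is reductive of rank $3$, its ring of conjugation-invariants has Krull dimension $3$, and the three functions $\simil$, $\tr$, $\tr\std$ are algebraically independent in $\O(\Gf)^{\Gf}$. This can be checked by restricting to the standard diagonal torus $T = \set{\diag(t_1, t_2, \mu/t_1, \mu/t_2)}$, on which these three invariants become explicit Laurent polynomials in $t_1, t_2, \mu$, and computing that their $3 \times 3$ Jacobian is not identically zero. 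Hence the substitution $P \mapsto \phi$ is injective, and $\phi$ is a nonzero element of $\O(\Gf)^{\Gf}$.

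The zero locus $Z = V(\phi) \subsetneq \Gf$ is therefore a proper Zariski-closed, conjugation-invariant subvariety of the connected group $\Gf$. By \Cref{cor:zariski-closed}, there exists $\l \in \LL$ for which the image of $\rho_\l$ is Zariski-dense in $\Gf(\Qlb)$. Rajan's theorem then implies that the set $\set{p : \rho_\l(\Frob_p) \in Z}$ has Dirichlet density $0$. Translating via the identities $\simil\rho_\l(\Frob_p) = \epsilon(p^2)p^{k_1+k_2-3}$, $\tr\rho_\l(\Frob_p) = a_p$, and $\tr\std\rho_\l(\Frob_p) = b_p$, this set agrees, up to the finitely many primes at which $\rho_\l$ is ramified, with the set in the statement.

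The principal technical point is the nonvanishing of $\phi$, which plays for a single form the role that the coprimality hypothesis with $s^\kappa - {s'}^{\kappa'}$ plays in \Cref{thm:b_p}: here the condition is automatic because $\Gf$ is connected, whereas $\Gkk$ may have several connected components in the two-form setting, requiring the coprimality assumption to ensure nonvanishing on each of them.
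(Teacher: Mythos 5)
Your proof is correct, but it is not the proof the paper gives: the paper deduces \Cref{thm:one-form} in one line from \Cref{thm:b_p}, by viewing $P(s,a,b)$ as an element of $\Qb[s,\frac1s,s',\frac1{s'},a,b,a',b']$ and picking an auxiliary form $f'$ (satisfying the running hypotheses) that is not a twist of $f$. You instead carry out the direct single-form argument that the paper only asserts is possible. The comparison is instructive. The paper's route is essentially free once \Cref{thm:b_p} is in hand, but it funnels a one-form statement through the full two-form machinery ($\Gkk$, its component decomposition, Goursat, the coprimality condition) and needs the existence of a suitable auxiliary $f'$. Your route needs none of that: since $\Gf$ is connected, Rajan's theorem applied to $\rho_\l$ alone (with Zariski-dense image by \Cref{cor:zariski-closed}) reduces everything to showing $\phi = P(\simil,\tr,\tr\std)$ is a nonzero element of $\O(\Gf)^{\Gf}$, and you correctly identify that this is the single-form substitute for \Cref{lem:non-vanishing}: on $\Gf$ the three invariants $\simil$, $\tr$, $\tr\std$ are algebraically independent (one can see this, as in the paper's footnote, from the fact that $(\simil,\tr,\tr\wedge^2)$ surjects onto $\GG_m\times\AA^2$, every quartic $x^4 - ax^3+cx^2-a\nu x+\nu^2$ with $\nu\ne0$ being realised by an explicit symplectic similitude matrix), whereas on $\Gkk$ the relation $\simil(\gamma)^{\kappa}=\simil(\gamma')^{\kappa'}$ destroys independence and forces the coprimality hypothesis. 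Two small points to tidy: Rajan's theorem as stated requires the representation to land in $\mathcal G(E)$ for $E/\Ql$ finite, so you should note that $\rho_\l$ factors through $\Gf(E)$ for some such $E$ by compactness (the paper does this in the proof of \Cref{thm:galois}); and after clearing denominators in $s$ you should say explicitly that algebraic independence plus invertibility of $\simil$ gives the injectivity of the substitution map on Laurent polynomials.
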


Note that \Cref{thm:one-form} follows immediately from \Cref{thm:b_p} by viewing $P$ as an element of $\Qb[s,\frac1s,s',\frac1{s'},a,b,a',b']$ and choosing $\pi'$ such that $\Pi$ is not a twist of $\Pi'$.

It is also possible to prove \Cref{thm:one-form} directly, without appealing to \Cref{thm:b_p}, by applying our techniques to a single Siegel modular form. We include \Cref{thm:one-form} here to emphasise that \Cref{thm:b_p} captures such statements uniformly.

\subsection{Results for elliptic modular forms}\label{sec:modular}

Our methods equally apply to elliptic modular forms. In this setting, we replace \Cref{thm:joint-large-image} with the classical results of Ribet and Momose on the images of Galois representations attached to modular forms \cite{Ribet77, momose, Ribet85,Loeffler-adelic}. We state the corresponding analogue of \Cref{thm:b_p} in this context and note that our applications in \Cref{sec:applications} all have analogues in this setting.

Let $f$ and $f'$ be cuspidal newforms of weights $k, k' \ge 2$, levels $N, N'$, and nebentypus characters $\epsilon, \epsilon'$. For each prime $p$, let $a_p$ and $a_p'$ denote their Hecke eigenvalues. Assume that $f$ and $f'$ do not have complex multiplication. Similarly to \eqref{def:kappa}, define $\kappa$ and $\kappa'$ to be the smallest positive integers such that
\[(\epsilon(p)p^{k-1})^\kappa = (\epsilon'(p)p^{k'-1})^{\kappa'}\]
for all primes $p$. Then we obtain the following result:

\begin{theorem}\label{thm:modular}
    Let $P(s, s', a, a')\in\Qb[s,\frac1s,s',\frac1{s'},a,a']$. Assume that $P$ is coprime to $s^\kappa - {s'}^{\kappa'}$. 
    Suppose that for a set of primes $p$ of positive upper density
    \[P(\epsilon(p)p^{k-1}, \epsilon'(p)p^{k'-1},a_p,a_p')=0.\]
    Then $k = k'$ and there is a Dirichlet character $\chi$ such that $f = f'\tensor\chi$.
\end{theorem}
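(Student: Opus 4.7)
The plan is to follow the same three-step structure as the proof of \Cref{thm:b_p}, with $\mathrm{GL}_2$ replacing $\Gf$ and the theorems of Ribet, Momose, and Loeffler \cite{Ribet77, momose, Ribet85, Loeffler-adelic} on images of modular Galois representations replacing the large-image results used in the Siegel case. I would define $\Gkk \sub \mathrm{GL}_2 \times \mathrm{GL}_2$ as the fibre product along $\det^\kappa$ and $\det^{\kappa'}$, so that $\Gkk(\Qlb)$ contains the image of $R_\l = \rho_\l \times \rho_\l'$ since $\det\rho_\l(\Frob_p) = \epsilon(p)p^{k-1}$ and similarly for $\rho_\l'$, and give an explicit description of $\O(\Gkk)$ and its connected components in parallel with \Cref{sec:gkk}. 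I would then show that for a density-one set $\LL$ of primes $\l$, the Zariski closures of $\rho_\l(\Ga\Q)$ and $\rho_\l'(\Ga\Q)$ in $\mathrm{GL}_{2/\Qlb}$ both equal $\mathrm{GL}_2$; this follows from Ribet--Momose--Loeffler, since the $\l$-adic image of a non-CM newform contains an open subgroup of $\mathrm{SL}_2(\O_\lambda)$ after restriction to a finite-index subgroup of $\Ga\Q$ and the determinant character surjects onto $\GG_m$.

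Next, I would prove the dichotomy analogous to \Cref{thm:joint-large-image}: letting $\Gamma_\l$ denote the Zariski closure of the image of $R_\l$, either $\Gamma_\l = \Gkk(\Qlb)$ for all $\l \in \LL$, or there is a Dirichlet character $\chi$ with $f = f' \tensor \chi$. Since both projections of $\Gamma_\l$ surject onto $\mathrm{GL}_2$, a Goursat-type analysis of $\Gamma_\l\sub \mathrm{GL}_2\times\mathrm{GL}_2$ forces any proper containment to come from an algebraic isomorphism between quotients of the two $\mathrm{GL}_2$ factors modulo their centres, hence from inner conjugation combined with a character twist (the outer automorphism $g\mapsto (g^t)\ii$ contributing only an additional twist by $\det$). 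This yields $\rho_\l \simeq \rho_\l' \tensor \chi_\l$ for some character $\chi_\l$, and by Faltings' isogeny theorem together with strong multiplicity one, $\chi_\l$ descends to a Dirichlet character $\chi$ with $f = f' \tensor \chi$.

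Finally, I would apply Rajan's algebraic Chebotarev density theorem \cite{raj} to the conjugation-invariant regular function
\[\phi(\gamma, \gamma') = P(\det \gamma, \det \gamma', \tr \gamma, \tr \gamma')\]
on $\Gkk$. The coprimality of $P$ with $s^\kappa - {s'}^{\kappa'}$ ensures that the restriction of $\phi$ to each connected component of $\Gkk$ is a non-zero polynomial in the traces, exactly as in the deduction of \Cref{thm:b_p} from \Cref{thm:galois}. If $\phi(R_\l(\Frob_p)) = 0$ for a set of primes of positive upper density, Rajan's theorem forces $\phi$ to vanish on some connected component of $\Gamma_\l$, so $\Gamma_\l \subsetneq \Gkk(\Qlb)$, placing us in case~(2) of the dichotomy. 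The equality $k = k'$ then follows by comparing Hodge--Tate weights of $\rho_\l$ and $\rho_\l' \tensor \chi$.

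The main obstacle I anticipate is the joint large image step when one or both of $f, f'$ has a non-trivial inner twist: the arithmetic $\l$-adic image is then strictly smaller than a congruence subgroup of $\mathrm{GL}_2(\O_\lambda)$ and could in principle sit inside additional proper closed subgroups of $\Gkk$ without forcing a direct character relation between the two forms. Passing to the Zariski closure trivialises this phenomenon at the algebraic-group level, since the closure is insensitive to restriction to a finite-index subgroup of $\Ga\Q$, but identifying the descended twisting character as a genuine Dirichlet character (and not merely an $\l$-adic character of $\Ga\Q$) requires a cyclotomic descent argument using the Hodge--Tate weights of $\det \rho_\l$ and $\det\rho_\l'$.
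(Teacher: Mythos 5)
Your proposal is correct and is exactly the argument the paper intends: the authors prove Theorem~\ref{thm:modular} by running the proof of Theorem~\ref{thm:b_p} with $\GL_2$ in place of $\Gf$, substituting the Ribet--Momose--Loeffler image results for Theorem~\ref{thm:joint-large-image}, and your Goursat dichotomy, non-vanishing lemma on components, and Hodge--Tate descent of the twisting character all match the paper's template (the appeal to Faltings is unnecessary --- strong multiplicity one plus the weight-zero Hodge--Tate argument already suffices). The only step worth making explicit is the analogue of Lemma~\ref{lem:all-components}, that the image of $R_\l$ meets every connected component of $\Gkk$, which is what lets you pass from the identity-component computation to $\Gamma_\l = \Gkk(\Qlb)$.
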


\begin{corollary}
    Let $P(s,a)\in\Qb[s,\frac1s,a]$ be a non-zero polynomial. Then the set of primes
     \[\set{p : P(\epsilon(p)p^{k-1},a_p) = 0}\]
    has density $0$.
\end{corollary}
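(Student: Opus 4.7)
The plan is to deduce this corollary from Theorem \ref{thm:modular} by the same device used to obtain Theorem \ref{thm:one-form} from Theorem \ref{thm:b_p}: pick an auxiliary newform $f'$ that is not a twist of $f$, and apply the two-form statement to a polynomial that ignores the data of $f'$.

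First, I would fix a non-CM cuspidal newform $f'$ of some weight $k'\ge 2$, level $N'$, and nebentypus $\epsilon'$, which is not a twist of $f$ by any Dirichlet character. Such forms exist in abundance, since the twist equivalence class of $f$ is a sparse subset of the space of newforms. Next, view the given polynomial $P(s,a)\in\Qb[s,\frac{1}{s},a]$ as the polynomial $\tilde P(s,s',a,a') = P(s,a)$ in $\Qb[s,\frac{1}{s},s',\frac{1}{s'},a,a']$, independent of the variables $s'$ and $a'$. Let $\kappa,\kappa'$ be the positive integers associated to the pair $(f,f')$ as in Theorem \ref{thm:modular}.

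I then need to check that $\tilde P$ is coprime to $s^\kappa - {s'}^{\kappa'}$ in the Laurent polynomial ring. Since $\kappa'\ge 1$, every non-unit factor of $s^\kappa - {s'}^{\kappa'}$ has positive degree in $s'$, whereas $\tilde P$ has degree $0$ in $s'$; so no non-unit factor can be common, and coprimality follows. If the set $\{p : P(\epsilon(p)p^{k-1},a_p)=0\}$ had positive upper density, then Theorem \ref{thm:modular} applied to $\tilde P$ would force $k=k'$ and $f = f'\otimes\chi$ for some Dirichlet character $\chi$, contradicting the choice of $f'$. Hence the set has density $0$.

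There is no real obstacle: the only nontrivial step is the coprimality check, which reduces to the one-line degree argument above, and the existence of a suitable $f'$, which is immediate from the richness of the space of newforms.
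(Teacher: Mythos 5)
Your proposal is correct and matches the paper's intended argument: the paper proves the analogous single-form statement (Theorem \ref{thm:one-form}) exactly by viewing $P$ in the larger ring and choosing an auxiliary form that is not a twist of $f$, and the same device is implicit here. Your coprimality check is the right observation (any common non-unit factor would have to be, up to a monomial unit, independent of $s'$, which is impossible for a divisor of $s^\kappa-{s'}^{\kappa'}$ since $\kappa'\ge 1$), and the rest goes through as you describe.
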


For any integer $n\ge 1$, the Hecke eigenvalue $a_{p^n}$ can be expressed as a polynomial in $a_p$ and $\epsilon(p)p^{k-1}$. Hence, as a corollary to \Cref{thm:modular}, we immediately deduce the following result:

\begin{corollary}\label{cor:apn}
    Fix $n\iN$ and suppose that $a_{p^n} = a'_{p^n}$ for a set of primes $p$ of positive upper density. Then $k = k'$ and there is a Dirichlet character $\chi$ such that $f = f'\tensor\chi$.
\end{corollary}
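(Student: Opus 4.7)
The plan is to deduce this directly from \Cref{thm:modular} by exhibiting an explicit polynomial $P$ which encodes the hypothesis $a_{p^n} = a_{p^n}'$. The key input is the Hecke recursion
\[a_{p^{n+1}} = a_p\cdot a_{p^n} - \epsilon(p)p^{k-1}\cdot a_{p^{n-1}},\]
together with the initial data $a_{p^0} = 1$ and $a_{p^1} = a_p$. A straightforward induction produces a polynomial $U_n(a,s) \in \Qb[a,s]$ such that
\[a_{p^n} = U_n\bigl(a_p, \epsilon(p)p^{k-1}\bigr).\]
I would then set $P(s,s',a,a') := U_n(a,s) - U_n(a',s')$, so that the hypothesis of the corollary translates precisely to the vanishing hypothesis of \Cref{thm:modular} for this $P$.

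What remains is to verify that $P$ is non-zero and coprime to $s^\kappa - {s'}^{\kappa'}$ in $\Qb[s,\frac1s,s',\frac1{s'},a,a']$. A further elementary induction on the recursion shows that $U_n(a,s)$ is monic of degree $n$ in $a$. Viewing $P$ as a polynomial in $a$ with coefficients in $\Qb[s,\frac1s,s',\frac1{s'},a']$, its leading term is therefore $a^n$ with coefficient~$1$. In particular $P \neq 0$, and $P$ is not divisible by any non-unit of the subring $\Qb[s,\frac1s,s',\frac1{s'}]$. Since every irreducible factor of $s^\kappa - {s'}^{\kappa'}$ lies in $\Qb[s,s']$ and does not involve $a$ or $a'$, this shows that $P$ and $s^\kappa - {s'}^{\kappa'}$ share no common factor.

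With both hypotheses verified, \Cref{thm:modular} applies and yields $k = k'$ together with a Dirichlet character $\chi$ for which $f = f'\tensor\chi$, as required. I do not anticipate a genuine obstacle: the whole content of the argument is the translation step via the Hecke recursion, and the only check worth writing out carefully is the monicity of $U_n$ in $a$, which in turn reduces the coprimality verification to a one-line observation. In this sense the result is genuinely an immediate corollary of \Cref{thm:modular}.
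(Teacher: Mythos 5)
Your proof is correct and matches the paper's approach: the paper likewise deduces the corollary from \Cref{thm:modular} by noting that $a_{p^n}$ is a polynomial in $a_p$ and $\epsilon(p)p^{k-1}$. Your explicit verification of the coprimality condition via the monicity of $U_n$ in $a$ is a detail the paper leaves implicit, and it is sound.
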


In fact, we can show that if $P\in\Qb[a,a']$ is any polynomial such that $P(a_{p^n}, a'_{p^n})= 0$ for a positive density of $p$, then $f$ is a twist of $f'$ (see \Cref{rmk:symmetric}).

\subsection{Convention and Notation}
Throughout this paper, we adopt the following conventions and notational framework:
\begin{itemize}
    \item $\pi$ and $\pi'$ denote cuspidal automorphic representations of $\Gf(\AQ)$, whose archimedean components are holomoprhic discrete series or limit of discrete series representations. We refer to \cite{weiss2018image}*{Sec.~2} for more precise definitions, including that of the Blattner parameter $(k_1, k_2)$ of a (limit of) discrete series representation. If $\pi$ corresponds to a Siegel modular form $f$, then this parameter matches the weight of $f$. 
    \item Let $f $ and $f'$ denote cuspidal \emph{Siegel paramodular newforms} of genus 2 with weights $(k_1, k_2)$ and  $(k_1', k_2')$---i.e.\ weights $\Sym^{k_1 - k_2}\det^{k_2}$ and $\Sym^{k_1' - k_2'}\det^{k_2'}$---with $k_1\ge k_2\ge 2$ and $k_1'\ge k_2'\ge 2$, and levels $K(N)$ and $K(N')$, where $K(N)$ is the paramodular group of level $N$. In this case, we follow the conventions of \cite{schmidt}.

    \item We assume throughout that $\pi$ and $\pi'$ are of general type (type \textbf{(G)} in the notation of \cite{schmidt}), and that neither has CM, RM, nor arises as a symmetric cube lift.
    \item For each prime $p $, let $a_p $ and $a_p' $ denote the \emph{Hecke eigenvalues} of $\pi $, $\pi' $ with respect to the standard Hecke operator $T_p $. Similarly, let $b_p $ and $b_p' $ denote the coefficients of the \emph{standard $L $-function}.

    \item Let $\kappa $, $\kappa' $ be the smallest positive integers such that
    \[
    (\varepsilon(p)p^{k_1 + k_2 - 3})^\kappa = (\varepsilon'(p)p^{k_1' + k_2' - 3})^{\kappa'}
    \]
    for all primes $p$. We use $\kappa$ and $\kappa'$ to relate the similitude characters of the Galois representations attached to $f $ and $f' $.

    \item If $f$ is a paramodular newform and $\chi$ is a quadratic character, we denote by $f \otimes \chi $ the \emph{quadratic twist} of $f$ constructed by \cite{jlr}. We have
    \[
    a_p(f \otimes \chi) = \chi(p) a_p(f)
    \]
    for all but finitely many primes $p$.

\end{itemize}

\section{Applications: New Results and Extensions}\label{sec:applications}

In this section, we present numerous applications of \Cref{cor:trace,thm:b_p}.

We note that, by \Cref{rem:twisting}, if $f$ and $f'$ are paramodular newforms, then in all the results of this section, the conclusion $\Pi\simeq\Pi'\tensor\chi$ can be replaced with the conclusion that $f$ is a scalar multiple of a quadratic twist of $f'$. We also note that all the results stated in this section have analogues for elliptic modular forms (c.f.~\Cref{sec:modular}). 

\subsection{Distinguishing eigenforms by their Hecke eigenvalues}

Let $n$ and $m$ be positive integers. Taking $P(s,s',a,b,a',b') = (a^n - {a'}^n)(b^m-{b'}^m)$ in \Cref{thm:b_p}, we immediately deduce the following corollary:

\begin{corollary}\label{cor:apbp}
    Suppose that for a set of primes $p$ of positive upper density, either $a_p^n = {a_p'}^n$ or $b_p ^m= {b_p'}^m$. Then $(k_1, k_2) = (k_1', k_2')$ and there is a Dirichlet character $\chi$ such that $\Pi \simeq\Pi'\tensor\chi$.
\end{corollary}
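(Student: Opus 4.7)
The plan is to apply Theorem \ref{thm:b_p} directly, with the polynomial
\[P(s, s', a, b, a', b') = (a^n - {a'}^n)(b^m - {b'}^m).\]
The hypothesis that either $a_p^n = (a_p')^n$ or $b_p^m = (b_p')^m$ holds on a positive upper density set of primes is equivalent to saying that $P$ evaluated at $(\epsilon(p^2)p^{k_1+k_2-3}, \epsilon'(p^2)p^{k_1'+k_2'-3}, a_p, b_p, a_p', b_p')$ vanishes on that same set. So once we check that $P$ satisfies the hypotheses of Theorem \ref{thm:b_p}, the desired conclusion follows immediately.

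Two things need verifying. First, $P$ is manifestly a nonzero element of $\Qb[s, 1/s, s', 1/s', a, b, a', b']$, since $n$ and $m$ are positive integers. Second, we must check that $P$ is coprime to $s^{\kappa} - {s'}^{\kappa'}$. This is essentially automatic because $P$ only involves the variables $a, a', b, b'$, while $s^{\kappa} - {s'}^{\kappa'}$ is a polynomial in $s, s'$ alone. Any common non-unit factor in the Laurent polynomial ring $\Qb[s, 1/s, s', 1/s', a, b, a', b']$ would have to divide $s^{\kappa} - {s'}^{\kappa'}$, hence lie in the subring $\Qb[s, 1/s, s', 1/s']$; but no non-constant element of this subring divides $P$, since $P$ factors as a product of irreducibles in $a, a', b, b'$.

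Both hypotheses of Theorem \ref{thm:b_p} are therefore satisfied, yielding $(k_1, k_2) = (k_1', k_2')$ and the existence of a Dirichlet character $\chi$ with $\pi \simeq \pi' \otimes \chi$. There is no genuine obstacle here: the only point requiring any attention is the coprimality check, and it is immediate from the disjointness of the variable sets. This illustrates precisely why the coprimality condition in Theorem \ref{thm:b_p} is formulated as it is: it excludes only the trivial relations dictated by the weights and characters (which live purely in the $s, s'$ variables), while imposing no obstruction on genuine algebraic relations among the Hecke eigenvalues, such as the one considered here.
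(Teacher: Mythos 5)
Your proposal is correct and matches the paper's proof exactly: the paper also applies \Cref{thm:b_p} with $P(s,s',a,b,a',b') = (a^n - {a'}^n)(b^m - {b'}^m)$, treating the coprimality to $s^{\kappa} - {s'}^{\kappa'}$ as immediate since $P$ involves none of the variables $s, s'$. Your explicit verification of that coprimality is a harmless elaboration of a step the paper leaves implicit.
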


\begin{remarks}
    \begin{enumerate}
        \item If $f$ and $f'$ are paramodular newforms of level $1$, then $\chi$ must be trivial and we conclude that $f$ is a scalar multiple of $f'$.  Thus, \Cref{cor:apbp} generalises \cite{kms}*{Thm.~1.5} to Siegel modular forms of arbitrary level and character.
        \item If $f$ and $f'$ are paramodular newforms, then the character $\chi$ must be quadratic, and we deduce by \Cref{rem:twisting} that $f$ is a scalar multiple of $f\tensor\chi$. Thus, \Cref{cor:apbp} generalises \cite{wang}*{Thm.~1.2} to include the case $k_2 = 2$ (c.f.\ \cite{wang}*{Rem.~2}), and weakens the hypothesis by requiring only that $b_p = b_p'$ for a positive density of primes, rather than for almost all primes.
        \item The analogue of \Cref{cor:apbp} for elliptic modular forms recovers \cite{raj}*{Cor.~1} when $n = 1$, the main result of  \cite{Ramakrishnan-appendix} when $n=2$, and the main result of \cite{rajan-forms-squares} in general.
    \end{enumerate}
\end{remarks}

We also deduce the following result, which is the analogue of \cite{para2}*{Cor.~1.1} for Siegel modular forms (see also \cite{kulkarni}*{Thm.~2}):

\begin{corollary}\label{cor:arbitrary power}
    Suppose that for a positive density of primes $p$, there exists a positive integer $n_p$ such that $a_p^{n_p} = {a_p'}^{n_p}$. Then $(k_1, k_2) = (k_1', k_2')$ and there is a Dirichlet character such that $\Pi\simeq\Pi'\tensor\chi $.
\end{corollary}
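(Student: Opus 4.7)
The plan is to reduce the statement to \Cref{cor:apbp} by exploiting the fact that a number field contains only finitely many roots of unity. The difficulty lies entirely in the fact that the integer $n_p$ may depend on $p$ (and may even be unbounded), so a direct application of \Cref{thm:b_p} to a single polynomial is not possible.

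First I would fix a number field $K$ containing all Hecke eigenvalues of both $f$ and $f'$; for instance, $K$ can be taken as the compositum of the Hecke fields of $f$ and $f'$, which is a finite extension of $\Q$. Let $N = |\mu(K)|$ denote the (finite) order of the group of roots of unity in $K$. This $N$ depends only on $f$ and $f'$, not on any individual prime.

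Next, I would analyse the hypothesis pointwise. Let $S$ be the set of primes of positive upper density for which there exists $n_p\ge 1$ with $a_p^{n_p} = {a_p'}^{n_p}$. For $p \in S$, there are three cases. If $a_p = 0$ and $a_p' \neq 0$ (or vice versa), then $a_p^{n_p}=0\neq {a_p'}^{n_p}$ for any $n_p \ge 1$, a contradiction. If $a_p = a_p' = 0$, then trivially $a_p^N = {a_p'}^N$. If $a_p, a_p' \neq 0$, then $(a_p/a_p')^{n_p} = 1$, so $a_p/a_p' \in K^\times$ is a root of unity; by definition of $N$, it satisfies $(a_p/a_p')^N = 1$, i.e.\ $a_p^N = {a_p'}^N$. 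In every case, $a_p^N = {a_p'}^N$ for all $p \in S$.

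Finally, since $S$ has positive upper density and the fixed integer $N$ works uniformly on $S$, I would apply \Cref{cor:apbp} with $n = N$ (and $m$ irrelevant, since the relation involves only the $a_p$'s) to conclude that $(k_1, k_2) = (k_1', k_2')$ and $\pi \simeq \pi' \otimes \chi$ for some Dirichlet character $\chi$. The main conceptual step is the observation that the hypothesis forces $a_p/a_p'$ to be a root of unity in a fixed number field, which collapses the a priori infinite family of relations indexed by $n_p$ into a single relation of bounded degree.
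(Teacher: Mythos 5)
Your proof is correct and follows essentially the same approach as the paper: observe that $a_p/a_p'$ is a root of unity in a fixed number field, which contains only finitely many roots of unity, so a single exponent $N$ works uniformly, and then apply \Cref{cor:apbp}. Your explicit handling of the cases where $a_p$ or $a_p'$ vanishes is a small tidying-up that the paper's proof glosses over, but it is not a different argument.
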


\begin{proof}
    If $a_p^{n_p} = {a_p'}^{n_p}$, then $a_p/a_p'$ is a root of unity. Let $E$ be the number field containing all the $a_p$'s and $a_p'$'s. Since $E$ contains only finitely many roots of unity, it follows that there is an integer $n$ such that $a_p^n = {a_p'}^n$ for a positive density of primes $p$. The result follows from \Cref{cor:apbp}.
\end{proof}

\subsection{Distinguishing eigenforms by their normalised eigenvalues}\label{sec:eigenvalues}

For each prime $p$ and each positive integer $n$, let 
\[\lambda_{p^n} = \frac{a_{p^n}}{p^{n(k_1 + k_2 - 3)/2}}\]
and
\[\lambda_{p^n}' = \frac{a_{p^n}'}{p^{n(k'_1 + k'_2 - 3)/2}}\]
be the normalised $T_{p^n}$-Hecke eigenvalues of $\pi$ and $\pi'$.

\begin{theorem}\label{cor:normalised-trace}
				  Let
	$P(x,y)\in \overline \Q[x, y]$ be any  non-zero polynomial, and suppose that for a set of primes $p$ of positive upper density
\[P( \lambda_p, \lambda_p')=0.\] 
Then $(k_1, k_2) = (k_1', k_2')$ and there is Dirichlet character $\chi$ such that $\Pi\simeq\Pi'\otimes\chi$.
\end{theorem}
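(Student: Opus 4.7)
The plan is to reduce this to Theorem~\ref{thm:b_p} by converting the relation $P(\lambda_p, \lambda_p') = 0$ into a polynomial relation in $a_p, a_p'$ and the similitude values $s := \epsilon(p^2)p^{k_1+k_2-3}$, $s' := \epsilon'(p^2)p^{k_1'+k_2'-3}$. The key identity is
\[\lambda_p^2 = \frac{\epsilon(p^2)\,a_p^2}{s}, \qquad (\lambda_p')^2 = \frac{\epsilon'(p^2)\,(a_p')^2}{s'},\]
so any polynomial relation in $\lambda_p^2, (\lambda_p')^2$ translates algebraically into a relation in $s, s', a_p, a_p'$ and the character values.

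\textbf{Step 1 (eliminating odd powers).} The individual $\lambda_p$'s involve $\sqrt{p}$, while the squares $\lambda_p^2$ do not, so I would first replace $P$ by
\[P_2(x, y) := P(x, y)\,P(-x, y)\,P(x, -y)\,P(-x, -y),\]
a nonzero polynomial invariant under each of the sign flips $x \mapsto -x$ and $y \mapsto -y$. Hence $P_2(x,y) = R(x^2, y^2)$ for a nonzero $R \in \overline{\Q}[u,v]$, and $P(\lambda_p, \lambda_p') = 0$ gives $R(\lambda_p^2, (\lambda_p')^2) = 0$ on the same set of primes.

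\textbf{Step 2 (fixing characters).} Substituting the key identity and clearing denominators $s^A (s')^B$ (with $A, B$ the $u, v$-degrees of $R$) yields a relation in $s, s', a_p, a_p', \epsilon(p^2), \epsilon'(p^2)$. Because $\epsilon, \epsilon'$ are Dirichlet characters, the values $\epsilon(p^2), \epsilon'(p^2)$ depend only on $p$ modulo a fixed integer. By subadditivity of upper density, the hypothesised set meets at least one such residue class in a subset of positive upper density, on which $\epsilon(p^2) = \eta$ and $\epsilon'(p^2) = \eta'$ are constants. On this subset we obtain
\[Q(s, s', a_p, a_p') = 0, \qquad Q(s, s', a, a') := s^A (s')^B\, R\!\left(\tfrac{\eta\, a^2}{s},\ \tfrac{\eta'\, (a')^2}{s'}\right),\]
where $Q \in \overline{\Q}[s, s', a, a']$ is nonzero (the leading term of $R$ survives the substitution since $\eta, \eta'$ are roots of unity).

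\textbf{Step 3 (verifying coprimality: the main obstacle).} To invoke Theorem~\ref{thm:b_p}, I must check that $Q$ is coprime to $s^\kappa - (s')^{\kappa'}$. Suppose some irreducible factor $F(s, s')$ of $s^\kappa - (s')^{\kappa'}$ divided $Q$. Then for every $(s_0, s_0')$ on the vanishing locus of $F$ with $s_0 s_0' \neq 0$ (such points exist, since the curve $s^\kappa = (s')^{\kappa'}$ meets $\{s s' \neq 0\}$ and $F$ cuts out an irreducible component of it), the polynomial $Q(s_0, s_0', a, a')$ would vanish identically in $(a, a')$. But the map $(a, a') \mapsto (\eta\, a^2/s_0,\ \eta' (a')^2/s_0')$ is surjective onto $\overline{\Q}^2$, so $Q(s_0, s_0', a, a') \equiv 0$ would force $R \equiv 0$, contradicting Step 1. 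Therefore $Q$ is coprime to $s^\kappa - (s')^{\kappa'}$, and applying Theorem~\ref{thm:b_p} to $Q$ (viewed as a polynomial not involving $b, b'$) yields $(k_1, k_2) = (k_1', k_2')$ and a Dirichlet character $\chi$ with $\pi \simeq \pi' \otimes \chi$.
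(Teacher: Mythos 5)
Your proof is correct and follows the same overall route as the paper: pass to a polynomial relation in $\lambda_p^2, (\lambda_p')^2$ by multiplying over sign changes, rewrite the squares in terms of $a_p$ and the similitude value, and feed the result into \Cref{thm:b_p} after checking coprimality with $s^\kappa - (s')^{\kappa'}$. Two implementation details differ. First, the nebentypus: the paper raises to the $2d$-th power for $d$ the order of $\epsilon^2$ and $(\epsilon')^2$, so that $\lambda_p^{2d} = a_p^{2d}/s^d$ with no character factor and the resulting $\mathcal{P}$ is uniform in $p$; you instead keep only squares, which leaves the values $\epsilon(p^2), \epsilon'(p^2)$ in the relation, and you dispose of them by pigeonholing the positive-upper-density set into a residue class where these values are constants $\eta, \eta'$. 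That step is valid (upper density is finitely subadditive) and arguably more elementary, at the cost of producing a polynomial $Q$ that depends on the chosen class. Second, coprimality: the paper substitutes $s \mapsto s^2$, $a \mapsto sa$ and observes the factors become independent of $s, s'$, whereas you specialise $(s,s')$ to a point of the zero locus of a putative common irreducible factor and use surjectivity of $(a,a') \mapsto (\eta a^2/s_0, \eta'(a')^2/s_0')$ over $\overline{\Q}$ to force $R \equiv 0$; this geometric specialisation argument is also correct (each irreducible component of $s^\kappa = (s')^{\kappa'}$ is a curve meeting the torus $ss' \neq 0$). Both variants establish exactly what \Cref{thm:b_p} requires.
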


We will make frequent use of the following lemma:

\begin{lemma}\label{lem:divis}
    Let $R$ be a $\Qb$-algebra, let $f(x)\in R[x]$ be a non-zero polynomial, and let $d$ be a positive integer. Let 
    \[F(x) = \prod_{\zeta\in\mu_d}P(\zeta x).\]
    Then there exists a non-zero polynomial $Q$ such that $F(x) = Q(x^d)$.
    In particular, there exists a polynomial $Q\in R[x]$ such that $P(x)\mid Q(x^d)$.
\end{lemma}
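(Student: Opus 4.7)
The plan is to exploit the $\mu_d$-symmetry built into the definition $F(x) = \prod_{\zeta \in \mu_d} P(\zeta x)$. For any $\eta \in \mu_d$, the substitution $x \mapsto \eta x$ merely permutes the factors of the product (sending the factor indexed by $\zeta$ to the factor indexed by $\eta\zeta$), so I obtain the identity $F(\eta x) = F(x)$ in $R[x]$. Writing $F(x) = \sum_{j} b_j x^j$ and equating coefficients yields $b_j(\eta^j - 1) = 0$ for every $\eta \in \mu_d$ and every $j$. Taking $\eta$ to be a primitive $d$-th root of unity, the scalar $\eta^j - 1$ is a nonzero element of $\Qb$ whenever $d \nmid j$, and is therefore a unit in the $\Qb$-algebra $R$. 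Thus $b_j = 0$ unless $d \mid j$, and defining $Q(y) := \sum_{k} b_{kd} y^k \in R[y]$ gives $F(x) = Q(x^d)$, which is the first assertion.

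For the non-triviality of $Q$, I compute the leading term of $F$: if $\deg P = n$ with leading coefficient $a_n \in R$, then $F$ has leading term $a_n^{d}\,\bigl(\prod_{\zeta \in \mu_d}\zeta^{n}\bigr)\, x^{nd}$. The cyclotomic product $\prod_{\zeta \in \mu_d}\zeta^{n}$ equals $\pm 1$, and in particular is a unit of $\Qb$, so $F$ (and hence $Q$) is nonzero as soon as $a_n^{d} \neq 0$. This is automatic in any integral domain, which covers the $\Qb$-algebras relevant to the intended applications of the lemma. Finally, the ``in particular'' clause is immediate: the factor of $F$ corresponding to $\zeta = 1$ is exactly $P(x)$, so $P(x) \mid F(x) = Q(x^d)$ in $R[x]$.

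I do not anticipate a serious obstacle, since the whole argument reduces to the one-line observation that $F$ is $\mu_d$-invariant together with the elementary fact that $\eta^j - 1$ is a unit in $R$ for $d \nmid j$. The only minor subtlety is the nonvanishing of $Q$ when $R$ is a general $\Qb$-algebra that may have zero divisors; this is handled by the leading-term argument above under the standing assumption (implicit in the applications) that the leading coefficient of $P$ has nonzero $d$-th power.
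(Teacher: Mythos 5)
Your proof is correct and follows essentially the same route as the paper's: observe that $F(\eta x)=F(x)$ for every $\eta\in\mu_d$ because the substitution permutes the factors, deduce that only the coefficients in degrees divisible by $d$ survive, and note that the factor at $\zeta=1$ gives $P(x)\mid F(x)$. The one place you diverge is in fact an improvement: the paper justifies the nonvanishing of $Q$ by asserting that a product of non-zero polynomials is non-zero, which is false over a general $\Qb$-algebra (for instance $R=\Qb[t]/(t^2)$, $P(x)=tx$, $d=2$ yields $F=0$), whereas your leading-term computation correctly isolates the condition that $\pm a_n^d\neq 0$, where $a_n$ is the top coefficient of $P$. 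This condition holds automatically when $R$ is an integral domain, and in every application of the lemma in the paper $R$ is a (Laurent) polynomial ring over $\Qb$, so your hedge is harmless; but you are right that the lemma as literally stated for arbitrary $\Qb$-algebras requires this extra hypothesis (or that $R$ be reduced), a point the paper's own one-line proof glosses over.
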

\begin{proof}

    Clearly $P(x)\mid F(x)$. Since $F(\zeta x) = F(x)$ for all $\zeta\in\mu_d$, it follows that $F(x) = Q(x^d)$ for some polynomial $Q(x)\in R[x]$. Since $P(\zeta x)$ is non-zero for all $x$, $F(x)$ is non-zero, and hence $Q$ is non-zero.
\end{proof}

\begin{proof}[Proof of \Cref{cor:normalised-trace}]
Let $d$ be an integer such that $\epsilon^{d} = {\epsilon'}^{d} = 1$. By the construction of \Cref{lem:divis}, there exists a polynomial $Q\in \Qb[x,y]$ such that $P(x,y)\mid Q(x^{2d}, y^{2d})$.

Let $\mathcal P(s, s', a, b, a', b') = Q(\frac{a^{2d}}{s^d}, \frac{{a'}^{2d}}{{s'}^d})\in\Qb[s,\frac1s,s',\frac1{s'},a,b,a',b']$. 
Since 
\[\lambda_p^{2d} = \frac{a_p^{2d}}{p^{d(k_1 + k_2-3)}} = \frac{a_p^{2d}}{(\epsilon(p)p^{k_1 + k_2-3})^d}\] 
and similarly for ${\lambda_p'}^{2d}$, we see that 
\[\mathcal P(\epsilon(p)p^{k_1 + k_2 - 3}, \epsilon'(p)p^{k_1'+k_2' - 3}, a_p, b_p, a_p', b_p') = Q(\lambda_p^{2d}, {\lambda'}_p^{2d}) = 0\]
for a positive proportion of $p$. 

Now, $\mathcal{P}$ is coprime to $s^\kappa - {s'}^{\kappa'}$. Indeed, if $\mathcal{P}$ had a common factor with $s^\kappa - {s'}^{\kappa'}$, then $\mathcal{P}(s^2, {s'}^2, a,b,a',b')$ would have a common factor with $s^{2\kappa} - s^{2\kappa'}$. But 
\[\mathcal{P}(s^2, {s'}^2, a,b,a',b') = Q((a/s)^{2d}, (a'/s')^{2d}),\]
and by the construction in \Cref{lem:divis}, the right-hand side is a product of non-zero polynomials of the form $P\left( \zeta (a/s), \zeta'(a'/s')\right)$ with $\zeta,\zeta'\in \mu_{2d}$. After the change of variables $a \mapsto s a$, $a' \mapsto s' a'$, each of these polynomials become independent of $s$ and $s'$, and thus coprime to $s^{2\kappa} - s^{2\kappa'}$. Thus, $\mathcal{P}$ is coprime to $s^\kappa - s^{\kappa'}$, and the result follows from \Cref{thm:b_p}.
\end{proof}

Taking $P(x, y) = x^n - y^n$ in \Cref{cor:normalised-trace}, we immediately deduce the following result:

\begin{corollary}\label{cor:power}
    Let $n$ be a positive integer, and suppose that for a set of primes $p$ of positive upper density, $\lambda_p^n = {\lambda_p'}^n$. Then there is a Dirichlet character $\chi$ such that $\Pi\simeq\Pi'\tensor\chi.$
\end{corollary}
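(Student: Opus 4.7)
The plan is to apply \Cref{cor:normalised-trace} directly with the explicit polynomial choice $P(x, y) = x^n - y^n \in \overline{\Q}[x, y]$. This polynomial is manifestly non-zero---for instance, it specializes to $1$ at $(x, y) = (1, 0)$---so the hypotheses of \Cref{cor:normalised-trace} are satisfied. The assumption $\lambda_p^n = {\lambda_p'}^n$ on a set of primes of positive upper density is precisely the vanishing condition $P(\lambda_p, \lambda_p') = 0$ on that same set, and \Cref{cor:normalised-trace} then yields $(k_1, k_2) = (k_1', k_2')$ together with a Dirichlet character $\chi$ such that $\pi \simeq \pi' \otimes \chi$, as required.

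There is essentially no obstacle in this final step: all the substantive work has already been carried out upstream. The proof of \Cref{cor:normalised-trace} reduces statements about normalized eigenvalues to \Cref{thm:b_p} by clearing denominators through multiplication by a suitable power of the similitude factor $\epsilon(p^2) p^{k_1 + k_2 - 3}$, and by verifying the coprimality condition against $s^\kappa - {s'}^{\kappa'}$ via the construction in \Cref{lem:divis}. The proof of \Cref{thm:b_p} in turn rests on the image analysis in \Cref{thm:joint-large-image} and its Chebotarev-theoretic consequence \Cref{thm:galois}. Since $x^n - y^n$ is a particularly simple non-zero polynomial, no additional bookkeeping is required at this stage beyond direct specialization of \Cref{cor:normalised-trace}.
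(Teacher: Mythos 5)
Your proof is correct and matches the paper exactly: the paper also deduces this corollary by taking $P(x,y)=x^n-y^n$ in \Cref{cor:normalised-trace}. Nothing further is needed.
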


The analogue of \Cref{cor:power} for elliptic modular forms recovers \cite{murty-pujahari}*{Thm.~1} and \cite{para}*{Thm.~1} when $n 
=1$.

\subsection{Distinguishing eigenforms by the absolute values of their Hecke eigenvalues}
\begin{theorem}\label{cor:absolute}
				 Let
		$P(s,s',a, b, a', b')\in  \Qb[s,\frac 1s,s',\frac{1}{s'},a, b,a',b']$ be a non-zero polynomial. 
        Assume that $P$ is coprime to $s^\kappa - {s'}^{\kappa'}$.
        Suppose that for a set of primes $p$ of positive upper density
\[P(p^{k_1 + k_2 - 3}, p^{k_1' + k_2'-3},|a_p|, |b_p|,  |a_p'|, |b_p'|)=0.\] 
Then $(k_1, k_2) = (k_1', k_2')$ and there is Dirichlet character $\chi$ such that $\Pi\simeq\Pi'\otimes\chi$. 
\end{theorem}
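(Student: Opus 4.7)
The plan is to reduce Corollary~\ref{cor:absolute} to Theorem~\ref{thm:b_p} by replacing the absolute values with algebraic expressions in $a_p, b_p, a_p', b_p'$ and roots of unity, using purity of Frobenius eigenvalues to control complex conjugation.

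First I would compute $\bar a_p$ and $\bar b_p$. Writing $\alpha_1,\ldots,\alpha_4$ for the Satake parameters of $\rho_\l(\Frob_p)$, the symplectic pairing $\alpha_i\alpha_{\sigma(i)} = \mu := \simil\rho_\l(\Frob_p) = \epsilon(p^2)p^{k_1+k_2-3}$ gives $\sum_i 1/\alpha_i = a_p/\mu$, while purity gives $\bar\alpha_i = p^{k_1+k_2-3}/\alpha_i$; combining, $\bar a_p = a_p/\epsilon(p^2)$ and $|a_p|^2 = a_p^2/\epsilon(p^2)$. An analogous calculation—using that $\std\rho_\l$ is self-dual and of weight zero since $\std$ factors through $\SO_5$—yields $\bar b_p = b_p$, so $|b_p|=\pm b_p$. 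Fixing $d$ with $\epsilon^{2d}=(\epsilon')^{2d}=1$ (hence $\epsilon(p^2)^d=1$), the scalars $\tau_p := |a_p|/a_p$ and $\tau_p' := |a_p'|/a_p'$ lie in $\mu_{2d}$, and $\sigma_p := |b_p|/b_p,\sigma_p' := |b_p'|/b_p'$ lie in $\mu_2$; moreover $|a_p| = \tau_p a_p$ and $p^{k_1+k_2-3} = \tau_p^2 s$ for $s = \epsilon(p^2)p^{k_1+k_2-3}$. The critical observation is that the definition of $(\kappa,\kappa')$ in \eqref{def:kappa} forces $\epsilon(p^2)^\kappa = \epsilon'(p^2)^{\kappa'}$, whence $\tau_p^{2\kappa} = (\tau_p')^{2\kappa'}$ for every $p$, so $(\tau_p,\tau_p')$ always lies in the subgroup $K := \{(\tau,\tau')\in\mu_{2d}^2 : \tau^{2\kappa} = (\tau')^{2\kappa'}\}$.

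With this setup, I would define
\[\mathcal P(s,s',a,b,a',b') := \prod_{\substack{(\tau,\tau')\in K\\ \sigma,\sigma'\in\mu_2}} P(\tau^2 s, (\tau')^2 s', \tau a, \sigma b, \tau' a', \sigma' b'),\]
a non-zero Laurent polynomial. Its factor indexed by $(\tau_p,\tau_p',\sigma_p,\sigma_p')$, evaluated at the arithmetic point $(\epsilon(p^2)p^{k_1+k_2-3},\epsilon'(p^2)p^{k_1'+k_2'-3},a_p,b_p,a_p',b_p')$, reduces to $P(p^{k_1+k_2-3},p^{k_1'+k_2'-3},|a_p|,|b_p|,|a_p'|,|b_p'|)=0$, so $\mathcal P$ satisfies the vanishing hypothesis of Theorem~\ref{thm:b_p} on a set of primes of positive upper density.

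The main obstacle, and the reason for restricting the product to $K$, is verifying that $\mathcal P$ is coprime to $s^\kappa-(s')^{\kappa'}$. Writing $n=\gcd(\kappa,\kappa')$, $\kappa=nm$, $\kappa'=nm'$, the irreducible factors of $s^\kappa-(s')^{\kappa'}$ over $\Qb$ are $L = s^m-\zeta_0(s')^{m'}$ with $\zeta_0\in\mu_n$. If such an $L$ divided some factor $P(\tau^2 s, (\tau')^2 s', \tau a,\ldots)$ of $\mathcal P$, the invertible linear substitution $S=\tau^2 s$, $S'=(\tau')^2 s'$, $A=\tau a$, $B=\sigma b$, etc., would translate this into $P(S,S',A,B,A',B')$ being divisible by $S^m-\xi(S')^{m'}$, where $\xi = \zeta_0\tau^{2m}(\tau')^{-2m'}$. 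The membership $(\tau,\tau')\in K$ yields $\xi^n = \zeta_0^n\tau^{2\kappa}(\tau')^{-2\kappa'}=1$, so $\xi\in\mu_n$ and therefore $S^m-\xi(S')^{m'}$ divides $S^\kappa-(S')^{\kappa'}$, contradicting the hypothesis that $P$ is coprime to $s^\kappa-(s')^{\kappa'}$. Theorem~\ref{thm:b_p} applied to $\mathcal P$ then yields the desired conclusion.
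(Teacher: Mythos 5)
Your reduction to \Cref{thm:b_p} is structurally the same as the paper's: both proofs clear the absolute values by multiplying $P$ over a fixed finite set of root-of-unity substitutions and then verify the coprimality hypothesis for the resulting product. The one genuine gap is your input from ``purity.'' The identity $\bar\alpha_i = p^{k_1+k_2-3}/\alpha_i$ for each \emph{individual} Frobenius eigenvalue is the generalized Ramanujan conjecture, which (as the paper itself notes in the Sato--Tate subsection) is known for $k_2\ge 3$ by Weissauer but open for $k_2=2$; the theorem you are proving imposes no such restriction on the weights. The paper instead gets $a_p=\epsilon(p)\overline{a_p}$ and $a_{p^2}=\epsilon(p^2)\overline{a_{p^2}}$ unconditionally from the adjointness of the Hecke operators with respect to the Petersson inner product. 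Your derivation is repairable without Ramanujan: since $\overline{a_p}$ is a symmetric function of the $\alpha_i$, you only need the \emph{multiset} identity $\{\bar\alpha_i\}=\{p^{k_1+k_2-3}/\alpha_i\}$, which follows from unitarity of the local component of the transfer of $\pi$ to $\GL_4$ (i.e.\ $\overline{\pi_p}\cong\pi_p^{\vee}$), with no temperedness needed. One further point to pin down: your coprimality argument depends delicately on the exact constant in the conjugation relation. You need $\tau_p^2=\epsilon(p^2)^{-1}$ so that $(\tau_p,\tau_p')\in K$ and hence $\xi^n=1$; with the relation $\overline{a_p}=a_p/\epsilon(p)$ that the paper quotes from Andrianov, one only gets $\tau_p^{4\kappa}=(\tau_p')^{4\kappa'}$, $\xi^n=\pm1$, and the step ``$S^m-\xi (S')^{m'}$ divides $S^\kappa-(S')^{\kappa'}$'' could fail. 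Whichever normalization is correct, $K$ must be defined to match it.

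Setting that aside, your implementation of the coprimality check is a genuine (and welcome) improvement in precision over the paper's. The paper forms $\mathcal P = Q(s^d,(s')^d,a^{2m},b^{2m},\dots)$ via \Cref{lem:divis} and then, if $\mathcal P$ shares a factor $S(s,s')$ with $s^{\kappa}-(s')^{\kappa'}$, divides it out at the end and argues that divisibility by $P$ is preserved; this leaves implicit why the quotient still vanishes at the arithmetic points, where $S$ itself may vanish. You instead restrict the product to the subgroup $K$ and show directly that no irreducible component $s^m-\zeta_0(s')^{m'}$ of $s^{\kappa}-(s')^{\kappa'}$ can divide \emph{any} factor of $\mathcal P$, using the invertible change of variables and the constraint $\xi^n=1$ coming from membership in $K$. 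This is closer in spirit to the paper's treatment of \Cref{cor:normalised-trace} and is the cleaner way to organize the argument; the restriction to $K$ is exactly the right idea and is what makes the factor-by-factor check go through.
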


\begin{proof}

Applying the properties of the Peterson inner product (\cite{anzh}*{Prop.~1.8}), we have
 \[
 a_p=\epsilon(p)\overline{a_p} \quad   {\rm and} \quad a_{p^2}=\epsilon(p^2)\overline{a_{p^2}},
 \]
where $\overline z$ denotes the complex conjugate of the complex number $z$. It follows that the arguments of $a_p$ and $a_{p^2}$ are rational multiples of $\pi$. Consequently, the arguments of ${a_p^2}/\epsilon(p^2)$ and ${a_{p^2}}/{\epsilon(p^2)}$, and hence of $b_p$, are as well (see \eqref{b_p}). Since the same is true for $a_p'$ and $b_p'$, there exists a positive integer $m$ such that $a_p^m,b_p^m, {a_p'}^m, {b_p'}^m\in \R.$  

Let $d$ be a positive integer such that $\epsilon^{d} = {\epsilon'}^{d} = 1$.
By \Cref{lem:divis}, there exists a polynomial $Q(s,s',a,a',b,b')\in \Qb[s,\frac1s,s',\frac1{s'},a,a',b,b']$ such that 
\[P(s,s',a,b,a',b')\mid Q(s^{d},{s'}^d,a^{2m},b^{2m},{a'}^{2m},{b'}^{2m}),\]
Let 
\[\mathcal P(s,s',a,b,a',b')= Q(s^{d},{s'}^d,a^{2m},b^{2m},{a'}^{2m},{b'}^{2m}).\]
Note that $|a_p|^{2m}=a_p^{2m}$ and similarly for $b_p$, $a_p'$ and $b_p'$. Thus,
\[\mathcal P(\epsilon(p)p^{k_1 + k_2-3},\epsilon'(p)p^{k'_1 + k'_2-3}, a_p, b_p, a_p', b_p') = Q(p^{d(k_1 + k_2 - 3)},p^{d(k_1'+k_2'-3)},a_p^{2m},b_p^{2m},{a_p'}^{2m},{b_p'}^{2m}),\]
and the right-hand side is divisible by 
\[P(p^{k_1 + k_2 - 3}, p^{k_1' + k_2'-3},|a_p|, |b_p|,  |a_p'|, |b_p'|),\]
which is $0$ for a positive proportion of $p$.

If $\mathcal{P}$ has a common factor $S(s,s')$ with $s^{\kappa}-{s'}^{\kappa'}$, then we can replace $\mathcal{P}$ with $\mathcal{P}' = \mathcal{P}/S$. Since we chose $P$ to be coprime to $s^{\kappa}-s^{\kappa'}$, $\mathcal{P}'$ is still divisible by $P$, and thus satisfies the hypotheses of \Cref{thm:b_p}. The result follows.
\end{proof}

\begin{corollary}\label{cor:absolute_normalized}
				 Let
		$P(x, y)\in  \Qb[x,y]$ be a non-zero polynomial.  Suppose that for a set of primes $p$ of positive upper density
\[P(|\lambda_p|, |\lambda_p'|)=0.\] 
Then $(k_1, k_2) = (k_1', k_2')$ and there is Dirichlet character $\chi$ such that $\Pi\simeq\Pi'\otimes\chi$. 
\end{corollary}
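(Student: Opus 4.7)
The plan is to reduce Corollary~\ref{cor:absolute_normalized} to \Cref{cor:absolute} by constructing an auxiliary polynomial $\mathcal{P}$ that satisfies the hypotheses of that theorem. The key identity is $|\lambda_p|^2 = |a_p|^2 / p^{k_1 + k_2 - 3}$, which lets us express $|\lambda_p|$ and $|\lambda_p'|$ through the variables $s,s',a,a'$ appearing in \Cref{cor:absolute}, once we pass to even powers to clear the half-integer exponent hidden in $\lambda_p$.

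First I would apply \Cref{lem:divis} iteratively, in each variable of $P$ with $d=2$, to produce a non-zero polynomial $Q(x,y) \in \Qb[x,y]$ such that
\[
Q(x^2,y^2) = \prod_{\zeta,\eta\in\mu_2} P(\zeta x,\eta y),
\]
and in particular $P(x,y) \mid Q(x^2,y^2)$. Then I would set
\[
\mathcal{P}(s,s',a,b,a',b') := Q\!\left(\frac{a^2}{s}, \frac{{a'}^2}{s'}\right) \in \Qb[s,\tfrac{1}{s},s',\tfrac{1}{s'},a,b,a',b'].
\]
Substituting yields
\[
\mathcal{P}(p^{k_1+k_2-3}, p^{k_1'+k_2'-3}, |a_p|, |b_p|, |a_p'|, |b_p'|) = Q(|\lambda_p|^2, |\lambda_p'|^2),
\]
which is divisible by $P(|\lambda_p|, |\lambda_p'|)$ and therefore vanishes for a set of primes of positive upper density.

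The step I expect to be the main obstacle is verifying that $\mathcal{P}$ is coprime to $s^\kappa - {s'}^{\kappa'}$, arguing in the spirit of the coprimality check in the proof of \Cref{cor:normalised-trace}. Any common factor would be a non-trivial polynomial $T(s,s')$. Choosing a point $(s_0,s_0')$ with $s_0, s_0' \neq 0$ satisfying $T(s_0,s_0')=0$ (such points exist in abundance since the zero locus of $T$ sits inside the curve $s^\kappa = {s'}^{\kappa'}$) would force $Q(a^2/s_0, (a')^2/s_0')=0$ for all $a,a' \in \Qb$. Since squaring is surjective on $\Qb$, this would force $Q\equiv 0$, contradicting the fact that $Q(x^2,y^2)$ is a product of non-zero polynomials in the domain $\Qb[x,y]$. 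With coprimality established, \Cref{cor:absolute} applies to $\mathcal{P}$ and delivers $(k_1,k_2) = (k_1',k_2')$ together with a Dirichlet character $\chi$ such that $\pi\simeq\pi'\otimes\chi$.
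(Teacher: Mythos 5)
Your proof is correct and follows essentially the same route as the paper: apply \Cref{lem:divis} with $d=2$ to get $Q$ with $P(x,y)\mid Q(x^2,y^2)$, set $\mathcal{P}=Q(a^2/s,{a'}^2/s')$, and invoke \Cref{cor:absolute}. The only difference is that you explicitly verify the coprimality of $\mathcal{P}$ with $s^{\kappa}-{s'}^{\kappa'}$ (via a valid evaluation argument), a check the paper leaves implicit.
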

\begin{proof}
 By \Cref{lem:divis}, there exists a polynomial $Q\in \Qb[x,y]$ such that $P(x,y)\mid Q(x^{2}, y^{2})$.

Let $\mathcal P(s, s', a, b, a', b') = Q(\frac{a^{2}}{s}, \frac{{a'}^{2}}{s'})\in\Qb[s,\frac1s,s',\frac1{s'},a,b,a',b']$. 
We see that 
\[\mathcal P(p^{k_1 + k_2 - 3}, p^{k_1'+k_2' - 3}, |a_p|, |b_p|, |a_p'|, |b_p'|) = Q(|\lambda_p|^{2}, |\lambda'_p|^{2}) = 0\]
for a positive proportion of $p$. The result follows from \Cref{cor:absolute}.
\end{proof}

Taking $P(x, y) = x-y$, we deduce the following corollary, which generalises \cite{wong}*{Thm.~1.1} to the case of Siegel modular forms:

\begin{corollary}\label{cor:abs}
    Suppose that for a set of primes $p$ of positive upper density, we have $|\lambda_p| =  | \lambda_p'|$. Then $(k_1, k_2) = (k_1', k_2')$ and there is Dirichlet character $\chi$ such that $\Pi\simeq\Pi'\otimes\chi$. 
\end{corollary}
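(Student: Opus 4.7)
The plan is to deduce Corollary~\ref{cor:abs} as the one-line specialisation of Corollary~\ref{cor:absolute_normalized} obtained by taking $P(x,y)=x-y\in\overline{\mathbb{Q}}[x,y]$. This $P$ is manifestly non-zero, and the hypothesis $|\lambda_p|=|\lambda_p'|$ on a set of primes of positive upper density is literally $P(|\lambda_p|,|\lambda_p'|)=0$ on that set. Corollary~\ref{cor:absolute_normalized} then yields $(k_1,k_2)=(k_1',k_2')$ and a Dirichlet character $\chi$ with $\pi\simeq\pi'\otimes\chi$, which is exactly the claim.

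There is no obstacle at this level: all the substantive work has been packaged into the earlier corollary. For the record, the pipeline it invokes is the following. Corollary~\ref{cor:absolute_normalized} clears the absolute-value bars by applying Lemma~\ref{lem:divis} to pass from $P(x,y)$ to a polynomial $Q$ for which $P(x,y)\mid Q(x^2,y^2)$, so that evaluating at $(|\lambda_p|,|\lambda_p'|)$ becomes evaluating at $(|\lambda_p|^2,|\lambda_p'|^2)$; the latter equals $(\lambda_p^{2},{\lambda_p'}^{2})$ up to rational factors in $p^{k_1+k_2-3}$ and $p^{k_1'+k_2'-3}$, which brings us into the scope of Theorem~\ref{cor:absolute}. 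That theorem in turn uses the fact that $a_p^m,b_p^m,{a_p'}^m,{b_p'}^m\in\mathbb{R}$ for a suitable $m$ to replace absolute values by powers, and then feeds the resulting polynomial relation into Theorem~\ref{thm:b_p}, where the coprimality of the transformed polynomial with $s^{\kappa}-{s'}^{\kappa'}$ is verified by the change-of-variables trick used in the proof of Corollary~\ref{cor:normalised-trace}. Since our specialisation $P(x,y)=x-y$ is non-zero, every step in this chain goes through, and we obtain the desired conclusion. This also recovers \cite{wong}*{Thm.~1.1} in the elliptic case and extends it to Siegel paramodular newforms.
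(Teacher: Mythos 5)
Your proof is correct and is exactly the paper's argument: Corollary~\ref{cor:abs} is obtained by specialising Corollary~\ref{cor:absolute_normalized} to $P(x,y)=x-y$, and your summary of the underlying pipeline through Theorem~\ref{cor:absolute} and Theorem~\ref{thm:b_p} is accurate. Nothing further is needed.
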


We also deduce the following corollary, which strengthens \Cref{cor:power}, and generalises \Cref{cor:arbitrary power} to the case of normalised eigenvalues.

\begin{corollary}
        Suppose that for a positive density of primes $p$, there exists a positive integer $n_p$ such that $\lambda_p^{n_p} = {\lambda_p'}^{n_p}$. Then $(k_1, k_2) = (k_1', k_2')$ and there is a Dirichlet character $\chi$ such that $\Pi\simeq\Pi'\tensor\chi $
\end{corollary}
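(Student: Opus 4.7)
The plan is to reduce immediately to \Cref{cor:abs} via the elementary observation that equality of some positive power forces equality of absolute values. If $\lambda_p^{n_p} = {\lambda_p'}^{n_p}$, then taking complex absolute values on both sides gives $|\lambda_p|^{n_p} = |\lambda_p'|^{n_p}$, and since $|\lambda_p|$ and $|\lambda_p'|$ are non-negative real numbers with $n_p \ge 1$, taking the positive $n_p$-th root of each side yields $|\lambda_p| = |\lambda_p'|$. The hypothesis therefore produces a set of primes of positive upper density on which $|\lambda_p| = |\lambda_p'|$, and \Cref{cor:abs} then delivers $(k_1,k_2)=(k_1',k_2')$ together with a Dirichlet character $\chi$ such that $\pi \simeq \pi'\otimes\chi$.

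An alternative route, closer in spirit to the pigeonhole argument in the proof of \Cref{cor:arbitrary power}, would instead observe that
\[\frac{\lambda_p^2}{{\lambda_p'}^2} = \left(\frac{a_p}{a_p'}\right)^{2}\cdot p^{k_1' + k_2' - k_1 - k_2}\]
is a root of unity lying in the number field $E$ generated by the Hecke eigenvalues of $f$ and $f'$; the squaring is used only to clear a potentially half-integral exponent of $p$. Since $E$ contains only finitely many roots of unity, pigeonholing produces a fixed integer $n$ for which $\lambda_p^{2n} = {\lambda_p'}^{2n}$ on a positive upper density subset of the given primes, and \Cref{cor:power} then concludes the argument.

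No substantive obstacle is anticipated in either approach. The absolute-value route is the cleaner of the two and is the one I would adopt; the only subtlety---the parity of $k_1+k_2-3$ and $k_1'+k_2'-3$, which prevents $\lambda_p/\lambda_p'$ itself from lying in a fixed number field---is sidestepped automatically by passing to absolute values, and resolved by a single squaring in the alternative.
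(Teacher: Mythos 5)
Your primary argument—taking absolute values to get $|\lambda_p| = |\lambda_p'|$ and invoking \Cref{cor:abs}—is exactly the paper's proof. The alternative pigeonhole route is a reasonable backup but unnecessary; the absolute-value reduction is correct and complete as stated.
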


\begin{proof}
    If $\lambda_p^{n_p} = {\lambda_p'}^{n_p}$, then $|\lambda_p| = |\lambda_p'|$, and the result follows from \Cref{cor:abs}.
\end{proof}

\subsection{Distinguishing eigenforms by their Sato--Tate angles}

Suppose that the Ramanujan conjecture holds for $\pi$ and $\pi'$. This conjecture is known when the weights $k_2$ and $k'_2$ are at least $3$ \cite{Weissauer}*{Thm.~I}, but is open in the case that $k_2$ and $k_2'$ are equal to $2$. Under this assumption, the normalised Hecke eigenvalues satisfy
\[\lambda_p, \lambda_p'\in[-4,4]\]
for each prime $p\nmid NN'$, so we can find $\theta_p,\theta_p'\in[0,\Pi]$ such that 
\[4\cos\theta_p = \lambda_p\quad\text{and}\quad4\cos\theta'_p = \lambda'_p.\]

As an application of \Cref{cor:normalised-trace}, we show in the following result that $\theta_p$ and $\theta_p'$ satisfy a linear relation for a positive density of primes $p$ if and only if $\Pi$ and $\Pi'$ are twists of each other. This result generalises \cite{gun-murty-paul}*{Thm.~2}, which in turn builds on earlier work of Murty--Pujahari \cite{murty-pujahari}. Their result addresses the case of elliptic modular forms with $m = 1$ and $n = \pm 1$, and relies on the Sato--Tate conjecture. In contrast, our proof does not rely on the Sato--Tate conjecture, which is open in the case of Siegel modular forms.

\begin{corollary}\label{thm:angles}
    Let $m,n\in \Z\setminus\{0\}$ and $\alpha\in\R$. Assume that the set
    \[\set{p : m\theta_p + n\theta_p' = \alpha}\]
    has positive density. Then $(k_1, k_2) = (k_1', k_2')$ and there exists a Dirichlet character $\chi$ such that $\Pi\simeq\Pi'\tensor\chi$.
\end{corollary}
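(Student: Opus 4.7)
The plan is to translate the angle relation $m\theta_p+n\theta_p'=\alpha$ into a polynomial identity $P(\lambda_p,\lambda_p')=0$ with $P\in\Qb[x,y]\setminus\{0\}$, and then apply \Cref{cor:normalised-trace}. Since $\cos(k\theta)$ is even in $k$ and the sine factors will be squared away, we may assume $m,n>0$ without loss of generality.

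Taking cosines of both sides of $m\theta_p+n\theta_p'=\alpha$ and applying the addition formula, I obtain
\[T_m(u)T_n(v)-\cos\alpha=\sin(m\theta_p)\sin(n\theta_p'),\]
where $u=\lambda_p/4=\cos\theta_p$, $v=\lambda_p'/4=\cos\theta_p'$, and $T_k$ denotes the Chebyshev polynomial of the first kind. Squaring both sides and using $\sin^2(k\theta)=1-T_k(\cos\theta)^2$ eliminates the square roots and produces the identity $Q(u,v)=0$, where
\[Q(u,v):=T_m(u)^2-2\cos\alpha\,T_m(u)T_n(v)+T_n(v)^2-\sin^2\alpha.\]
Setting $P(x,y):=Q(x/4,y/4)$, it follows that $P(\lambda_p,\lambda_p')=0$ on the set in the hypothesis, so that $\{p:P(\lambda_p,\lambda_p')=0\}$ has positive upper density.

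The remaining steps are to verify that $P$ has algebraic coefficients and is non-zero. For algebraicity, the hypothesis produces at least one prime $p$ for which $m\theta_p+n\theta_p'=\alpha$; for any such $p$, the quantities $\cos\theta_p,\sin\theta_p,\cos\theta_p',\sin\theta_p'$ all lie in a number field (the sines via $\sqrt{1-u^2}$ and $\sqrt{1-v^2}$), so the cosine expansion of $m\theta_p+n\theta_p'$ forces $\cos\alpha\in\Qb$, and hence $\sin^2\alpha=1-\cos^2\alpha\in\Qb$. For non-vanishing, the monomial $u^{2m}$ appears in $Q$ only through $T_m(u)^2$, whose leading coefficient is $(2^{m-1})^2\neq 0$; thus $Q\neq 0$ and so $P\neq 0$.

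\Cref{cor:normalised-trace} applied to $P$ now yields $(k_1,k_2)=(k_1',k_2')$ together with a Dirichlet character $\chi$ such that $\pi\simeq\pi'\otimes\chi$. The principal subtlety is the algebraicity of $\cos\alpha$, which is not given a priori but is forced by the existence of even one prime at which the relation holds; once this is noticed, the remainder of the proof is formal.
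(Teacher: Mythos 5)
Your proof is correct and follows essentially the same route as the paper's: take cosines, eliminate the sines by squaring, observe that $\cos\alpha$ is forced to be algebraic, and feed the resulting non-zero polynomial relation in $(\lambda_p,\lambda_p')$ into \Cref{cor:normalised-trace}. The only cosmetic difference is that you carry out the squaring explicitly via Chebyshev polynomials (which also makes the non-vanishing of $P$ transparent), where the paper invokes \Cref{lem:divis} to replace $\sin\theta_p,\sin\theta_p'$ by their squares.
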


\begin{proof}
Suppose that $ m\theta_p + n\theta_p' = \alpha$. Then, taking cosines of both sides, there is a polynomial $P\in\Qb[c,c',s,s',a]$ such that
    \[P(\cos\theta_p,\cos\theta_p',\sin\theta_p,\sin\theta_p',\cos\alpha) = 0.\]
    By \Cref{lem:divis}, we can find a polynomial $Q\in \Qb[c,c',s,s',a]$ such that
    \[Q(\cos\theta_p,\cos\theta_p',\sin^2\theta_p,\sin^2\theta_p',\cos\alpha) = 0.\]
    In particular, we have
    \[Q(\tfrac14\lambda_p, \tfrac14\lambda_p', 1-(\tfrac14\lambda_p)^2, 1 - (\tfrac14\lambda_p')^2, \cos\alpha) = 0.\]

    Since $Q$ has coefficients in $\Qb$ and $\lambda_p$ and $\lambda_p'$ are algebraic, it follows that $\cos\alpha$ is algebraic too. Thus, there is a non-zero polynomial $\mathcal P(x,y)\in\Qb[x,y]$ such that 
    \[\mathcal{P}(\lambda_p, \lambda_p') = 0,\]
    and this polynomial is independent of $p$. The result follows from \Cref{cor:normalised-trace}.
\end{proof}

\subsection{Distinguishing eigenforms by their Satake parameters}\label{sec:satake}

For each prime $p\nmid N$, let $\{\beta_{1, p}, \beta_{2, p}, \beta_{3, p}, \beta_{4,p}\}$ be the roots of the Hecke polynomial
\[x^4- a_px^3 + \lb a_p^2 -a_{p^2} -\epsilon(p)p^{ k_1 + k_2 -4}\rb x^2 - a_p\epsilon(p)p^{ k_1 + k_2 -3}x + \epsilon(p^{2})p^{2 (k_1 + k_2-3)}\]
 corresponding to $\pi$, and for each $i$, let 
\[
\overline{\beta}_{i,p}=\frac{\beta_{i,p}}{p^{(k_1+k_2-3)/2}}.\]
Then the $\overline\beta_{i,p}$ are the Satake parameters of the transfer of the automorphic representation $\Pi$ to $\GL_4(\AQ)$ after twisting to make the representation unitary. 
The symmetries of the Hecke polynomial ensure that the roots can be ordered so that
\[\beta_{1,p}\beta_{4,p} = \beta_{2,p}\beta_{3,p} = \epsilon(p)p^{k_1 + k_2-3}\]
and hence that
\[\overline\beta_{1,p}\overline\beta_{4,p} = \overline\beta_{2,p}\overline\beta_{3,p} = \epsilon(p).\]
In particular, the $\beta_{i,p}$ are completely determined by $\beta_{1, p}$ and $\beta_{2,p}$, and this choice is unique up to the symmetries $\beta_{1,p}\leftrightarrow\beta_{2,p}$ and $\beta _{i,p}\leftrightarrow\beta_{5-i,p} = \epsilon(p)p^{k_1 + k_2-3}/\beta_{i,p}$. Let $\beta_{i,p}'$ be the corresponding Satake parameters for $\pi'$.

\begin{theorem}\label{thm:satake}
    Let $P(s,s',x_1, x_2, x_1', x_2')\in \Qb[s^{\pm1},{s'}^{\pm1},x_1^{\pm1},x_2^{\pm1},{x_1'}^{\pm1}, {x_2'}^{\pm1}]$
    be a polynomial such that:
    \begin{itemize}
        \item $P$ is symmetric in $(x_1, x_2)$ and in $(x_1', x_2')$.
        \item $P$ is independent under each of the substitutions $x_i\mapsto s/x_i$ and $x_i'\mapsto s'/x_i'$.
        \item $P$ is coprime to $s^{\kappa} - {s'}^{\kappa'}$.
    \end{itemize}
    Suppose that for a set of primes $p$ of positive upper density,
\[P(\epsilon(p)p^{k_1+k_2-3},\epsilon'(p)p^{k'_1+k'_2-3},\beta_{1,p}, \beta_{2,p},\beta_{1,p}',\beta_{2,p}')=0.\] 
Then $(k_1, k_2) = (k_1', k_2')$ and there is Dirichlet character $\chi$ such that $\Pi\simeq\Pi'\otimes\chi$. 
\end{theorem}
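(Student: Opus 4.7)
The plan is to reduce \Cref{thm:satake} to \Cref{thm:b_p} via invariant theory of $\Gf$. Parametrise the maximal torus of $\Gf$ so that the standard $4$-dimensional representation has eigenvalues $(x_1, x_2, s/x_2, s/x_1)$ with similitude $s$. The Weyl group $W(\Gf)$ has order $8$ and is generated by the three reflections $x_1 \leftrightarrow x_2$, $x_1 \leftrightarrow s/x_1$, and $x_2 \leftrightarrow s/x_2$. The three symmetry conditions imposed on $P$ are therefore equivalent to saying that $P$ is $W(\Gf) \times W(\Gf)$-invariant when viewed as a Laurent polynomial in $(s, x_1, x_2, s', x_1', x_2')$.

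The first main step is to invoke the Chevalley restriction theorem (or a direct computation in rank $2$) to identify the $W(\Gf)$-invariant Laurent polynomials in $(s, x_1, x_2)$ with the polynomial $\Qb[s^{\pm 1}]$-algebra freely generated by the characters of the two fundamental representations of $\Gf$, namely $\chi_{V_4} = x_1 + x_2 + s/x_1 + s/x_2$ and $\chi_{\wedge^2 V_4} = e_2(x_1, x_2, s/x_2, s/x_1)$. Setting $a = \chi_{V_4}$ and $b = \chi_{\wedge^2 V_4}/s - 1$, which match the formulas defining $a_p$ and $b_p$, I obtain a unique Laurent polynomial $\tilde P \in \Qb[s^{\pm 1}, {s'}^{\pm 1}, a, b, a', b']$ such that
\[P(s, s', x_1, x_2, x_1', x_2') = \tilde P(s, s', a, b, a', b').\]
Substituting the Satake parameters gives $a(\beta_{1,p}, \beta_{2,p}, s) = a_p = \tr \rho_\l(\Frob_p)$ and $b(\beta_{1,p}, \beta_{2,p}, s) = b_p = \tr \std\,\rho_\l(\Frob_p)$, so the vanishing hypothesis becomes $\tilde P(\epsilon(p^2) p^{k_1 + k_2 - 3}, \epsilon'(p^2) p^{k_1' + k_2' - 3}, a_p, b_p, a_p', b_p') = 0$ for a positive density of primes~$p$.

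To apply \Cref{thm:b_p}, I still need to verify that $\tilde P$ is coprime to $s^\kappa - {s'}^{\kappa'}$. The inclusion of invariants $\iota \colon \Qb[s^{\pm 1}, {s'}^{\pm 1}, a, b, a', b'] \hookrightarrow \Qb[s^{\pm 1}, {s'}^{\pm 1}, x_1^{\pm 1}, x_2^{\pm 1}, {x_1'}^{\pm 1}, {x_2'}^{\pm 1}]$ sends $\tilde P$ to $P$ and fixes $s^\kappa - {s'}^{\kappa'}$. Hence any common factor $\tilde Q$ of $\tilde P$ and $s^\kappa - {s'}^{\kappa'}$ in the smaller ring would produce $\iota(\tilde Q)$ as a common factor of $P$ and $s^\kappa - {s'}^{\kappa'}$ in the larger ring, contradicting the hypothesis. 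Thus \Cref{thm:b_p} applies and yields the conclusion.

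The main obstacle is the invariant theory step: identifying the ring of $W(\Gf)$-invariant Laurent polynomials on the torus. Although this is a standard consequence of Chevalley's restriction theorem, the presence of the Laurent variable $s^{\pm 1}$ and the need to track the similitude explicitly require some care; once this is settled, the remaining steps are purely formal manipulations.
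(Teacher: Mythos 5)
Your proposal is correct and follows essentially the same route as the paper: rewrite $P$ as a polynomial $R(s,s',a,b,a',b')$ in the Weyl-invariant combinations $a = x_1+x_2+s/x_1+s/x_2$ and $b = x_1x_2/s + x_1/x_2 + x_2/x_1 + s/x_1x_2 + 1$, check that coprimality to $s^\kappa - {s'}^{\kappa'}$ is preserved, and invoke \Cref{thm:b_p}. The invariant-theory step you flag as the main obstacle is handled in the paper by an elementary two-line argument (first pass to $u_i = x_i + s/x_i$ using that $\Qb[s^{\pm1},x_i^{\pm1}]$ is free of rank $2$ over $\Qb[s^{\pm1},u_i]$, then apply the fundamental theorem of symmetric polynomials to $(u_1,u_2)$), so no appeal to the Chevalley restriction theorem is needed.
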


\begin{proof}
    Set
\begin{align*}
   a &= x_1 + s/x_1 + x_2 + s/x_2,\\
   b &= x_1x_2/s + x_1/x_2 + x_2/x_1 + s/x_1x_2 + 1,\\
   a' &= x'_1 + s'/x'_1 + x'_2 + s'/x'_2, \\
   b' &=x'_1x'_2/s' + x'_1/x'_2 + x'_2/x'_1 + s'/x'_1x'_2 + 1.
\end{align*}
The symmetry assumptions on $P$ ensure that there is a polynomial $R\in\Qb[s^{\pm1},{s'}^{\pm1},a,b,a',b']$ such that
    \[P(s,s',x_1, x_2, x_1', x_2') = R(s,s',a,b,a',b').\]

Indeed, by the invariances under $x_i\mapsto s/x_i$ and $x_i'\mapsto s'/x_i'$, $P$ can be written as a polynomial in $\Qb[ s^{\pm1}, {s'}^{\pm1},u_1, u_2, u_1', u_2']$, where $u_i = x_i+s/x_i$ and $u_i' = x_i' + s'/x_i'$. The symmetries in $(x_1, x_2)$ and in $(x_1', x_2')$ then become symmetries in $(u_1, u_2)$ and $(u_1', u_2')$, so by the fundamental theorem of symmetric polynomials, $P$ can be written as a polynomial in $\Qb[ s^{\pm1}, {s'}^{\pm1},u_1+ u_2, u_1u_2, u_1' + u_2', u_1'u_2']$, or equivalently, as a polynomial $R$ in $\Qb[s^{\pm1},{s'}^{\pm1},a,b,a',b']$.
Moreover, since $P$ is coprime to $s^{\kappa}-{s'}^{\kappa'}$, so is $R$.

Let 
\[(s_p, s_p') =(\epsilon(p)p^{k_1+k_2-3},\epsilon'(p)p^{k'_1+k'_2-3}).\]
Observe that for every $p$, we have
\[a_p = \beta_{1,p} + \beta_{2,p} + s_p/\beta_{1,p} + s_p/\beta_{2,p},\]
\[b_p = \beta_{1,p}\beta_{2,p}/s_p + \beta_{1,p}/\beta_{2,p} + \beta_{2,p}/\beta_{1,p} + s_p/\beta_{1,p}\beta_{2,p} + 1,\]
and similarly for $a_p'$ and $b_p'$.
Hence,
\[P(s_p,s_p',\beta_{1,p},\beta_{2,p},\beta_{1,p}',\beta_{2,p}') = R(s_p,s_p',a_p,b_p,a_p',b_p')\]
and the result follows from \Cref{thm:b_p}.
\end{proof}

  

\begin{corollary}\label{cor:normalized satake}
  Assume that the central characters $\epsilon$ and $\epsilon'$ of $\pi$ and $\pi'$ are trivial. Let $P(x_1, x_2, x_1', x_2')\in \Qb[x_1^{\pm1},x_2^{\pm1},{x_1'}^{\pm1}, {x_2'}^{\pm1}]$  be a polynomial such that:
    \begin{itemize}
        \item $P$ is symmetric in $(x_1, x_2)$ and in $(x_1', x_2')$.
        \item $P$ is invariant under each of the substitutions $x_i\mapsto 1/x_i$ and $x_i'\mapsto 1/x_i'$.
    \end{itemize} 
    Suppose that for a set of primes $p$ of positive upper density,
\[P(\overline \beta_{1,p}, \overline \beta_{2,p},\overline \beta_{1,p}',\overline \beta_{2,p}')=0.\] 
Then $(k_1, k_2) = (k_1', k_2')$ and there is Dirichlet character $\chi$ such that $\Pi\simeq\Pi'\otimes\chi$. 
\end{corollary}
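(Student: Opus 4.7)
The plan is to reduce \Cref{cor:normalized satake} to \Cref{thm:satake} by manufacturing a polynomial $\mathcal P(s, s', x_1, x_2, x_1', x_2')$ whose vanishing at the unnormalised Satake parameters $(s_p, s_p', \beta_{1,p}, \beta_{2,p}, \beta_{1,p}', \beta_{2,p}')$ encodes the hypothesised vanishing of $P$ at the normalised ones. The main obstacle is that $\overline\beta_{i,p} = \beta_{i,p}/p^{(k_1+k_2-3)/2}$ involves a half-integer power of $p$, which cannot be expressed polynomially in $s_p = \epsilon(p^2) p^{k_1+k_2-3}$ and $\beta_{i,p}$. The hypothesis that $\epsilon$ and $\epsilon'$ are trivial or quadratic is what resolves this: for $p$ coprime to the conductors, $\epsilon(p^2) = \epsilon'(p^2) = 1$, so $\overline\beta_{i,p}^2 = \beta_{i,p}^2/s_p$ lies in the polynomial ring. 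The strategy is therefore to replace $P$ by a polynomial depending only on even powers of its variables, then divide by appropriate powers of $s$ and $s'$.

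Concretely, following the averaging construction of \Cref{lem:divis}, I set
\[Q(x_1, x_2, x_1', x_2') = \prod_{\zeta_i \in \{\pm 1\}} P(\zeta_1 x_1, \zeta_2 x_2, \zeta_3 x_1', \zeta_4 x_2').\]
This $Q$ is a nonzero Laurent polynomial, invariant under each $x_i \mapsto -x_i$, so it can be written as $R(x_1^2, x_2^2, x_1'^2, x_2'^2)$ for some Laurent polynomial $R$. The averaging preserves both the symmetries of $P$ in $(x_1, x_2)$ and $(x_1', x_2')$ and its invariance under $x_i \mapsto 1/x_i$ and $x_i' \mapsto 1/x_i'$, and these translate into the analogous symmetry and inversion-invariance for $R$ in its four variables. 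I then define
\[\mathcal P(s, s', x_1, x_2, x_1', x_2') = R(x_1^2/s, x_2^2/s, x_1'^2/s', x_2'^2/s').\]
The symmetry of $\mathcal P$ in $(x_1, x_2)$ and $(x_1', x_2')$ and its invariance under $x_i \mapsto s/x_i$ and $x_i' \mapsto s'/x_i'$ required by \Cref{thm:satake} both follow directly from the properties of $R$. Evaluation gives $\mathcal P(s_p, s_p', \beta_{1,p}, \beta_{2,p}, \beta_{1,p}', \beta_{2,p}') = Q(\overline\beta_{1,p}, \overline\beta_{2,p}, \overline\beta_{1,p}', \overline\beta_{2,p}')$, which is divisible by $P$ evaluated at the same arguments, and hence vanishes on the hypothesised positive-density set of primes.

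The main technical step will be verifying that $\mathcal P$ is coprime to $s^\kappa - s'^{\kappa'}$. I would mimic the coprimality check in the proof of \Cref{cor:normalised-trace}: under the substitution $s \mapsto s^2$, $s' \mapsto s'^2$, $x_i \mapsto s x_i$, $x_i' \mapsto s' x_i'$, the polynomial $s^\kappa - s'^{\kappa'}$ becomes $s^{2\kappa} - s'^{2\kappa'}$, while $\mathcal P$ becomes $R(x_1^2, x_2^2, x_1'^2, x_2'^2) = Q(x_1, x_2, x_1', x_2')$, a nonzero Laurent polynomial independent of $s$ and $s'$. Any common factor of $\mathcal P$ and $s^\kappa - s'^{\kappa'}$ would survive this substitution and force $Q = 0$, a contradiction. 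Applying \Cref{thm:satake} then yields $(k_1, k_2) = (k_1', k_2')$ and a Dirichlet character $\chi$ with $\pi \simeq \pi' \otimes \chi$.
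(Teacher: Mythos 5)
Your proposal is correct and follows essentially the same route as the paper: average $P$ over sign changes (the $d=2$ case of \Cref{lem:divis}) to get a polynomial in the squares $x_i^2$, substitute $x_i^2/s$ and $x_i'^2/s'$ to land in the setting of \Cref{thm:satake}, and verify coprimality with $s^\kappa - s'^{\kappa'}$ via the substitution $s\mapsto s^2$, $x_i\mapsto sx_i$. Your explicit identification of where the hypothesis that $\epsilon,\epsilon'$ are trivial or quadratic enters (namely $\epsilon(p^2)=1$, so $\overline\beta_{i,p}^2 = \beta_{i,p}^2/s_p$) is a point the paper leaves implicit, but the argument is the same.
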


\begin{remark}
    It is possible to formulate a statement of \Cref{cor:normalized satake} that applies with arbitrary central characters, however, the statement will be messier, due to the fact that $\overline\beta_{1,p}\overline\beta_{4,p} = \epsilon(p)$, which is not identically $1$.
\end{remark}

\begin{proof}
  By \Cref{lem:divis}, there exists a polynomial $Q\in \Qb[x_1^{\pm1},x_2^{\pm1},{x_1'}^{\pm1}, {x_2'}^{\pm1}]$ such that    
\begin{equation*}
P(x_1, x_2, x_1', x_2')\mid Q(x_1^{2},x_2^{2},{x_1'}^{2},{x_2'}^{2}).
\end{equation*}
The construction of $Q$ ensures that $Q$ satisfies the symmetry hypotheses. Define
$\mathcal P(s,s',x_1, x_2, x_1', x_2')\in \Qb[s^{\pm1},{s'}^{\pm1},x_1^{\pm1},x_2^{\pm1},{x_1'}^{\pm1}, {x_2'}^{\pm1}]$ by
\[
\mathcal P(s,s',x_1, x_2, x_1', x_2', s, s')= Q(x_1^{2}/s,x_2^{2}/s, {x_1'}^{2}/{s'},{x_2'}^{2}/{s'}).
\]


By construction, $\mathcal P$ satisfies all the symmetry and invariance conditions of \Cref{thm:satake}. Moreover, $\mathcal{P}$ is coprime to $s^{\kappa} - {s'}^{\kappa'}$. Indeed, if $\mathcal{P}$ had a common factor with $s^\kappa-s'^{\kappa'}$, then $\mathcal P(s^2,{s'}^2, x_1, x_2, x_1',x_2')$ would have a common factor with $s^{2\kappa}-s'^{2\kappa'}$. But 
\[\mathcal P(s^2,{s'}^2, x_1, x_2, x_1',x_2') 
 = Q((x_1/s)^{2}, (x_2/s)^{2}, (x_1'/s)^{2}, (x_2'/s)^{2})\]
 and by the construction in \Cref{lem:divis}, the right-hand side is a product of the polynomials {$P(\zeta_1 x_1/s, \zeta_2 x_2/s, \zeta_1' x_1'/s, \zeta_2' x_2'/s)$, for $\zeta_i,\zeta_i'\in \mu_{2d}$.} The change of variables $x_i\mapsto sx_i$ and $x_i'\mapsto s'x_i'$ shows that each of these polynomials is coprime to $s^{2\kappa}-s^{2\kappa'}$.

For each prime $p$, we have
\[\overline\beta_{i,p}^{2} = \frac{\beta_{i,p}^{2}}{s_p}\quad\text{ and }\quad (\overline\beta'_{i,p})^{2} = \frac{(\beta'_{i,p})^{2}}{{s'_p}},\]
where
\[(s_p, s_p') =(p^{k_1+k_2-3},p^{k'_1+k'_2-3}) .\] 
Hence,
    \[\mathcal{P}(s_p, s_p',\beta_{1,p},\beta_{2,p},\beta_{1,p}',\beta_{2,p}')=Q(\overline \beta_{1,p}^{2}, \overline \beta_{2,p}^{2},\overline \beta_{1,p}'^{2},\overline \beta_{2,p}'^{2})=0\]
   for a positive proportion of primes and the result follows from \Cref{thm:satake}.
\end{proof}

We immediately deduce the following corollary, which strengthens \cite{kumar-meher-shankhadar}*{Thm.~3.1} to higher level forms and generalises \cite{weiss2018image}*{Cor.~5.11}.

\begin{corollary}
    Suppose that $\Pi$ is not a character twist of $\Pi'$. Then for a set of primes of density $1$:
    \begin{enumerate}
        \item The Satake parameters $\beta_{i,p}$ and $\beta_{i,p}'$ are pairwise distinct.
        \item If $\epsilon$ and $\epsilon'$ are trivial, then the normalised Satake parameters $\overline\beta_{i,p}$ and $\overline\beta_{i,p}'$ are pairwise distinct.
    \end{enumerate}
\end{corollary}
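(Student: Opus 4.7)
The plan is to deduce both parts by contrapositive: I exhibit polynomials that vanish at the Satake data of $(f,f')$ precisely when some Satake parameter of $f$ equals some Satake parameter of $f'$, and check that the hypotheses of \Cref{thm:satake} (for part (1)) and of \Cref{cor:normalized satake} (for part (2)) are satisfied. The conclusions of those theorems then contradict the assumption that $\pi$ is not a character twist of $\pi'$, forcing the relevant set of primes to have density $0$. Internal distinctness within $\{\beta_{i,p}\}$ (and within $\{\beta'_{i,p}\}$) is handled separately via the single-form theory.

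For part (1), introduce the abbreviations $X_1 = x_1$, $X_2 = x_2$, $X_3 = s/x_2$, $X_4 = s/x_1$, and analogously $X_1',\dots,X_4'$, and set
\[
P(s,s',x_1,x_2,x_1',x_2') \;=\; \prod_{i,j=1}^{4}(X_i - X_j').
\]
The transposition $x_1 \leftrightarrow x_2$ induces the permutation $(X_1\,X_2)(X_3\,X_4)$, while each substitution $x_i \mapsto s/x_i$ swaps $X_i \leftrightarrow X_{5-i}$ and fixes the other two; in particular the multiset $\{X_1,X_2,X_3,X_4\}$ is preserved, so $P$ is symmetric in $(x_1,x_2)$ and invariant under $x_i \mapsto s/x_i$. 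The same holds for the primed variables. For coprimality, each factor $X_i - X_j'$ becomes, up to a unit in the Laurent polynomial ring, an irreducible element that genuinely involves at least one of $x_1,x_2,x_1',x_2'$, and therefore does not divide the Laurent polynomial $s^\kappa - {s'}^{\kappa'} \in \Qb[s^{\pm 1},{s'}^{\pm 1}]$; hence $P$ is coprime to $s^\kappa - {s'}^{\kappa'}$.

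Setting $s_p = \epsilon(p^2)p^{k_1+k_2-3}$ and $s_p' = \epsilon'(p^2)p^{k_1'+k_2'-3}$, we have $X_i|_p = \beta_{i,p}$ and $X_j'|_p = \beta_{j,p}'$, so $P$ vanishes at $(s_p,s_p',\beta_{1,p},\beta_{2,p},\beta_{1,p}',\beta_{2,p}')$ exactly when some $\beta_{i,p}$ equals some $\beta_{j,p}'$. If this set of primes had positive upper density, \Cref{thm:satake} would force $\pi \simeq \pi' \otimes \chi$ for some Dirichlet character $\chi$, contradicting the hypothesis. Combined with the pairwise distinctness of the four Satake parameters of a single Siegel newform of general type without CM, RM, or symmetric cube lift for a density one set of primes (from \cite{weiss2018image}*{Cor.~5.11}, or rederivable via \Cref{thm:one-form} applied to the discriminant of the Hecke polynomial, viewed as a polynomial in $a_p$, $b_p$, and $\epsilon(p^2)p^{k_1+k_2-3}$), we deduce that all eight Satake parameters are pairwise distinct outside a set of density $0$.

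For part (2), replace the coordinates by $\widetilde X_1 = x_1$, $\widetilde X_2 = x_2$, $\widetilde X_3 = 1/x_2$, $\widetilde X_4 = 1/x_1$ and analogously for the primed variables, and form
\[
\widetilde P(x_1,x_2,x_1',x_2') \;=\; \prod_{i,j=1}^{4}(\widetilde X_i - \widetilde X_j').
\]
Because $\epsilon$ and $\epsilon'$ are trivial or quadratic, $\overline\beta_{1,p}\overline\beta_{4,p} = \overline\beta_{2,p}\overline\beta_{3,p} = 1$, so $\widetilde X_i|_p = \overline\beta_{i,p}$ and $\widetilde X_j'|_p = \overline\beta_{j,p}'$, and the symmetry and invariance conditions of \Cref{cor:normalized satake} hold by exactly the same permutation argument as before. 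A positive-upper-density vanishing would again force $\pi \simeq \pi'\otimes\chi$, and the argument concludes as in part (1). The only substantive step is the symmetry and coprimality verification; everything else is a direct contrapositive of the results already in hand.
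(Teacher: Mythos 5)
Your proof is correct and follows essentially the same route as the paper: apply \Cref{thm:satake} (resp.\ \Cref{cor:normalized satake}) to a product of differences of the Satake coordinates $\{x_1,x_2,s/x_1,s/x_2\}$ and their primed counterparts. The only difference is that the paper puts \emph{all} differences among the eight coordinates (including the internal ones such as $x_1-x_2$ and $x_1-s/x_1$) into a single polynomial, so pairwise distinctness of all eight parameters follows from one application and your separate single-form step is not needed --- though your alternative treatment of internal distinctness via the discriminant of the Hecke polynomial and \Cref{thm:one-form} is also valid.
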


\begin{proof}

\begin{enumerate}
    \item Take $P$ to be the product of all the differences in the set
    \[\{x_1, x_2, s/x_1, s/x_2, x_1', x_2', s/x_1', s/x_2'\}\]
    in \Cref{thm:satake}.
    \item Take $P$ to be the product of all the differences in the set
    \[\{x_1, x_2, 1/x_1, 1/x_2, x_1', x_2', 1/x_1', 1/x_2'\}\]
    in \Cref{cor:normalized satake}.
\end{enumerate}
\end{proof}

Recall from \Cref{sec:eigenvalues} that $\lambda_{p^2}$ and $\lambda_{p^2}'$ are the normalised Hecke eigenvalues of $f$ and $f'$ under the operator $T(p^2)$. Using \Cref{cor:normalized satake} we show that $f$ is determined up to twist by $\lambda_{p^2}$ for a positive density of $p$. 

    \begin{corollary}\label{cor:square}
   Suppose that $f$ and $f'$ are Siegel paramodular newforms. Suppose that $\lambda_{p^2}=\lambda_{p^2}'$ for a set of primes $p$ of positive upper density. 
Then $(k_1, k_2) = (k_1', k_2')$ and there is a quadratic Dirichlet character $\chi$ such that $f$ is a scalar multiple of $f'\otimes\chi$. 
    \end{corollary}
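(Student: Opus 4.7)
The plan is to reduce \Cref{cor:square} to \Cref{cor:normalized satake} by expressing $\lambda_{p^2} - \lambda_{p^2}'$ as the value at $(\overline\beta_{1,p}, \overline\beta_{2,p}, \overline\beta_{1,p}', \overline\beta_{2,p}')$ of a fixed Laurent polynomial $P$ in four variables that satisfies the symmetry and invariance hypotheses of \Cref{cor:normalized satake}. This is a natural approach because $\lambda_{p^2}$ is itself a polynomial in the Hecke eigenvalues $a_p$ and $a_{p^2}$ (and therefore in the Satake parameters), and because the only $p$-dependent contribution will cancel when we subtract.

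For the computation, I would first read off from the Hecke polynomial displayed in \eqref{eq:l-function} that $a_{p^2} = e_1^2 - e_2 - \epsilon(p^2)p^{u-1}$ with $u = k_1 + k_2 - 3$, where $e_i$ is the $i$-th elementary symmetric polynomial in $\beta_{1,p},\dots,\beta_{4,p}$. Dividing by $p^u$ and using $\epsilon = 1$ yields
\[
\lambda_{p^2} = \overline{e}_1(\overline\beta_{1,p},\dots,\overline\beta_{4,p})^2 - \overline{e}_2(\overline\beta_{1,p},\dots,\overline\beta_{4,p}) - \tfrac{1}{p}.
\]
Using the relations $\overline\beta_{4,p} = 1/\overline\beta_{1,p}$ and $\overline\beta_{3,p} = 1/\overline\beta_{2,p}$, which hold because $\epsilon = 1$, one may view $\overline{e}_1$ and $\overline{e}_2$ as explicit Laurent polynomials in $\overline\beta_{1,p},\overline\beta_{2,p}$. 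The analogous formula holds for $\lambda_{p^2}'$, and upon subtracting the $-1/p$ terms cancel, so the hypothesis $\lambda_{p^2} = \lambda_{p^2}'$ becomes $P(\overline\beta_{1,p}, \overline\beta_{2,p}, \overline\beta_{1,p}', \overline\beta_{2,p}') = 0$, where
\[
P(x_1, x_2, x_1', x_2') = \overline{e}_1(x_1, x_2)^2 - \overline{e}_2(x_1, x_2) - \overline{e}_1(x_1', x_2')^2 + \overline{e}_2(x_1', x_2')
\]
is a fixed Laurent polynomial independent of $p$.

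Finally, I would check that $P$ fulfils the hypotheses of \Cref{cor:normalized satake}. Symmetry in $(x_1, x_2)$ and in $(x_1', x_2')$ is inherited from the symmetry of $\overline{e}_1,\overline{e}_2$ in $\overline\beta_1,\dots,\overline\beta_4$; invariance under each of $x_i \mapsto 1/x_i$ and $x_i' \mapsto 1/x_i'$ follows because such substitutions merely permute the set $\{\overline\beta_i, 1/\overline\beta_i\}$ on which $\overline{e}_1, \overline{e}_2$ depend symmetrically; and non-vanishing is immediate from the observation that the monomial $x_1^2$ appears in $P$ only via $\overline{e}_1(x_1,x_2)^2$, with coefficient $1$, and cannot be cancelled by any primed term. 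Applying \Cref{cor:normalized satake} then yields $(k_1, k_2) = (k_1', k_2')$ and $\pi \simeq \pi' \otimes \chi$ for some Dirichlet character $\chi$, and since $\epsilon = \epsilon' = 1$, \Cref{rem:twisting} upgrades this to the conclusion that $f$ is a scalar multiple of a quadratic twist $f' \otimes \chi$. I do not anticipate any real obstacle: the only routine work is the explicit expansion of $\overline{e}_1$ and $\overline{e}_2$ as Laurent polynomials in two variables, and the verification that the cancellation of the $1/p$ contributions makes $P$ truly independent of $p$.
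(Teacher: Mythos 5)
Your proposal is correct and follows essentially the same route as the paper: both express $\lambda_{p^2}$ as $\overline{e}_1^2-\overline{e}_2-\frac1p$ in the normalised Satake parameters, note that the $\frac1p$ terms cancel in the difference, and apply \Cref{cor:normalized satake} to the resulting fixed Laurent polynomial (the paper simply writes it out explicitly as $\sum_{i,\varepsilon}x_i^{2\varepsilon}+\sum_{\varepsilon_1,\varepsilon_2}x_1^{\varepsilon_1}x_2^{\varepsilon_2}$ minus the primed analogue, which agrees with yours up to an irrelevant additive constant). Your verification of the symmetry, inversion-invariance, and non-vanishing hypotheses is also sound.
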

    \begin{proof}
  From \Cref{eq:l-function}, we have
        \[
            \lambda_{p^2}= \sum_{1\le i \le 4} \overline\beta_{i,p}^2+ \sum_{1\le i<j\le 4}\overline\beta_{i,p}\overline\beta_{j,p}- \frac1p \quad {\rm and} \quad 
             \lambda_{p^2}'= \sum_{1\le i \le 4} \overline\beta_{i,p}'^2+ \sum_{1\le i<j\le 4}\overline\beta_{i,p}'\overline\beta_{j,p}'- \frac1p.
      \]
The result follows from \Cref{cor:normalized satake} by taking
     \[ P(x_1,x_2,x_1',x_2'):=
     \br{\sum_{\substack{i=1,2\\ \delta\in \pm1}}x_i^{2\delta} +   \sum_{\delta_1,\delta_2\in \pm1} x_1^{\delta_1}x_2^{\delta_2}}   -\br{\sum_{\substack{i=1,2\\ \delta\in \pm1}} {x_i'}^{2\delta} + \sum_{\substack{\delta_1,\delta_2\in \pm1}} {x_1'}^{\delta_1}{x_2'}^{\delta_2}}.
       \]
    \end{proof}




\subsection{Distinguishing eigenforms by their $L$-functions}

If $r\:\Gf\to \GL_n$ is an algebraic representation of $\Gf$, then we can define an $L$-function $L(\pi, s, r)$
\[L(\pi, s, r) = \prod_p L_p(\pi, s,r),\]
where, if $p\nmid N$, then
\[L_p(\pi, s, r) = \det(1 - r(\diag(\beta_{1, p}, \beta_{2, p}, \beta_{3, p}, \beta_{4, p})p^{-s})\ii.\]
Here, $\beta_{1, p}, \beta_{2, p}, \beta_{3, p}, \beta_{4, p}$ are the Satake parameters defined in the previous section. In particular, if we take $r\:\Gf\to \GL_4$ to be the inclusion map, then we have
\[L(\pi, s, r) = L_{\mathrm{spin}}(\pi, s),\]
while if we take $r = \std\:\Gf\to\PGSp_4\xrightarrow{\sim}\SO_5\hookrightarrow\GL_5$, then
\[L(\pi, s, r) = L_{\std}(\pi, s).\]

Expanding the Euler product, we can write
\[L(\pi, s, r) = \sum_{n}r_nn^{-s},\]
 where for $p\nmid N$ a prime, $r_p$ is the coefficient of $p^{-s}$ in the Euler factor $L_p(\pi, s, r)$. We can similarly define $L(\pi', s, r)$ and $r_p'$. 
 
 Thus, if $r\:\Gf\to \GL_4$ is the inclusion map, $r_p = a_p$, while if $r = \std$, then $r_p = b_p$. We can also take $r = \Sym^m\:\Gf\to \GL_{m+3\choose m}$ to recover the symmetric $m$-power $L$ function, $r = \mathrm{adj}\:\Gf\to \GL_{10}$ for the adjoint $L$-function, or $r$ to be any other representation of arithmetic interest.

 Note that if we take $r\:\Gf\to\GL_1$ to be the similitude character, then $L(\pi, s, r) = L(\epsilon, s - (k_1 + k_2 - 3))$, which depends only on the weight and character of $f$. Excluding this case, we show that if $r$ is not a sum of powers of the similitude character, then $f$ is determined up to twist by $r_p$:
 
\begin{theorem}\label{thm:l-function}
    Let $r\:\Gf\to\GL_n$ be any semisimple algebraic representation of $\Gf$. Assume that $r$ is not isomorphic to a direct sum of one-dimensional representations. 
    Let $P(x,y)\in\Qb[x,y]$ be any non-zero polynomial, and suppose that for a set of primes $p$ of positive density
    \[P(r_p, r_p') = 0.\]
Then $(k_1, k_2) = (k_1', k_2')$ and there is a Dirichlet character $\chi$ such that $\Pi\simeq\Pi'\tensor\chi$. 
\end{theorem}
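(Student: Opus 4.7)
The plan is to apply \Cref{thm:galois} to the conjugation-invariant function
\[\phi(\gamma,\gamma') := P(\Tr r(\gamma), \Tr r(\gamma')) \in \O(\Gkk)^{\Gkk}.\]
Expanding the Euler product defining $L(f,s,r)$ at an unramified prime $p$, the coefficient of $p^{-s}$ equals $\Tr r(\rho_\l(\Frob_p))$, so $r_p = \Tr r(\rho_\l(\Frob_p))$ and, similarly, $r_p' = \Tr r(\rho_\l'(\Frob_p))$. Hence the hypothesis $P(r_p, r_p') = 0$ translates into $\phi(R_\l(\Frob_p)) = 0$ for a positive density of primes, and \Cref{thm:galois} will then produce a Dirichlet character $\chi$ with $\pi \simeq \pi' \otimes \chi$ (from which $(k_1,k_2) = (k_1',k_2')$ follows immediately, since twisting by a Dirichlet character preserves archimedean components) \emph{provided} that $\phi$ does not vanish on any connected component of $\Gkk$. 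The remainder of the proof is this component-wise non-vanishing.

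First, I would decompose $r$ into irreducible summands and fix one summand $r_0$ with $\dim r_0 \ge 2$, which exists by hypothesis. Since $\Gf = \mathrm{Sp}_4 \cdot Z(\Gf)$ with $Z(\Gf)$ acting on $r_0$ by a central character, the restriction $r_0|_{\mathrm{Sp}_4}$ is irreducible and non-trivial; otherwise $r_0$ would factor through $\simil\colon \Gf \to \GG_m$ and be one-dimensional. By the Jacobson density theorem, the $\Qb$-linear span of $r_0(\mathrm{Sp}_4)$ is all of $\mathrm{End}(V_0)$, where $V_0$ is the representation space of $r_0$. Consequently, for any $g \in \Gf$, the map $h \mapsto \Tr r(gh)$ is non-constant on $\mathrm{Sp}_4$: the one-dimensional summands of $r$ factor through $\simil$ and contribute a constant on $\mathrm{Sp}_4$, while the $r_0$-contribution $h \mapsto \Tr(r_0(g) r_0(h))$ is the restriction of the non-zero linear functional $A \mapsto \Tr(r_0(g) A)$ to the spanning set $r_0(\mathrm{Sp}_4)$, and so takes infinitely many values on the connected variety $r_0(\mathrm{Sp}_4)$.

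Next, I would analyse the component structure of $\Gkk$. The map $(\simil,\simil)\colon \Gkk \to \GG_m \times \GG_m$ has image $\{(u,v) : u^\kappa = v^{\kappa'}\}$, which is a disjoint union of $\gcd(\kappa,\kappa')$ geometrically irreducible components; the fibres, being cosets of $\mathrm{Sp}_4 \times \mathrm{Sp}_4$, are connected. Hence each connected component $C$ of $\Gkk$ is a translate of the identity component, the first projection $p_1\colon C \to \Gf$ is surjective, and every fibre of $p_1$ over $g_0 \in \Gf$ contains at least one coset of $\mathrm{Sp}_4$ in the $\gamma'$-factor. Suppose, for contradiction, that $\phi$ vanishes identically on some component $C$. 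Writing $P(x,y) = \sum_i p_i(x) y^i$, the previous paragraph forces $y \mapsto P(\Tr r(g_0), y)$ to vanish on an infinite set for every $g_0 \in \Gf$, so $p_i(\Tr r(g_0)) = 0$ for every $i$ and every $g_0$. Since $\Tr r$ is non-constant on $\Gf$ (again by the Jacobson density input applied to the identity coset), it has infinite image, and so each $p_i$ is identically zero, forcing $P = 0$ and contradicting the hypothesis.

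The principal obstacle is precisely the component-wise non-vanishing of $\phi$: it requires both the description of the connected components of $\Gkk$ and the Jacobson density input that makes $\Tr r$ behave generically on each $\mathrm{Sp}_4$-coset. Once these ingredients are in place, \Cref{thm:galois} delivers the stated conclusion.
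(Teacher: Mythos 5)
Your overall strategy is sound and is genuinely different from the paper's: you apply \Cref{thm:galois} directly to $\phi(\gamma,\gamma')=P(\Tr r(\gamma),\Tr r(\gamma'))$ and verify the component-wise non-vanishing by a fibre argument on the group, whereas the paper expresses $\Tr r$ in terms of the Satake parameters $\diag(x_1,x_2,s/x_1,s/x_2)$ and reduces to \Cref{thm:satake} (hence to \Cref{thm:b_p}), checking coprimality to $s^\kappa-{s'}^{\kappa'}$ via algebraic independence of $S(x_1,x_2)$ and $S(x_1',x_2')$. The translation $r_p=\Tr r(\rho_\l(\Frob_p))$ and the reduction to non-vanishing on components are correct.

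However, the key intermediate claim — that for \emph{every} $g\in\Gf$ the map $h\mapsto\Tr r(gh)$ is non-constant on $\Sp_4$ — is false, because your argument only accounts for the one-dimensional summands and the single chosen summand $r_0$, ignoring cancellation between several higher-dimensional summands. Concretely, take $r=V\oplus(V\otimes\simil)$ with $V$ the standard representation, so $\Tr r(\gamma)=(1+\simil(\gamma))\Tr(\gamma)$; for any $g$ with $\simil(g)=-1$ (e.g.\ $\diag(1,1,-1,-1)$) the function $h\mapsto\Tr r(gh)$ is identically zero on $\Sp_4$. Consequently the deduction ``$p_i(\Tr r(g_0))=0$ for every $g_0$'' does not follow as stated. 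The gap is repairable: grouping the irreducible summands by their (pairwise non-isomorphic) restrictions to $\Sp_4$ and using linear independence of matrix coefficients, the coefficient multiplying each non-trivial $\sigma$-block is a Laurent polynomial in $\simil(\gamma_0')$ that vanishes for only finitely many similitudes, so $h\mapsto\Tr r(\gamma_0'h)$ is non-constant for all $\gamma_0'$ outside a proper closed subset; restricting to such generic $g_0$ (which still forces each $p_i$ to vanish on the infinite image of the non-constant function $\Tr r$) completes the argument. A second, smaller inaccuracy: a non-zero linear functional restricted to a spanning set need not be non-constant (the set can lie in an affine hyperplane $L^{-1}(c)$, $c\neq0$); the correct justification even in the single-summand case is that a constant linear combination of matrix coefficients of the non-trivial irreducible $r_0|_{\Sp_4}$ must vanish, forcing $r_0(g)=0$ by Burnside, a contradiction.
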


\begin{proof}
    Note that we have $\beta_{1,p}\beta_{4,p} = \beta_{2,p}\beta_{3,p} = \epsilon(p)p^{k_1 + k_2 -3}$, and similarly for the $\beta_{i,p}'$. Let 
    \[R(s, x_1, x_2) = \Tr r(\diag(x_1, x_2, s/x_1, s/x_2),\]
    so that $R(\epsilon(p)p^{k_1 + k_2 -3}, \beta_{1, p}, \beta_{2,p}) = r_p$. Let
    \[Q(s, s', x_1,  x_2,x_1', x_2') = P(R(s, x_1, x_2) ,R(s', x_1', x_2')).\]
    The conjugation invariance of the trace map ensures that $Q$ satisfies the first two hypotheses of \Cref{thm:satake}. 
    
    Suppose for contradiction that $Q$ is not coprime to $s^{\kappa} - {s'}^{\kappa'}$. Then there exist roots of unity $\zeta, \zeta'$ and integers $n, n'$ such that $Q(\zeta t^n, \zeta' t^{n'}, x_1,  x_2,x_1', x_2')$ is the zero polynomial of the polynomial ring $\Qb[t^{\pm1}, x_1^{\pm1},x_2^{\pm1}, {x_1'}^{\pm1}, {x_2'}^{\pm1}]$. Now, since $r$ is not a direct sum of one-dimensional representations, it does not factor through the similitude character $\simil\:\Gf\to \GL_1$. It follows that for some $t_0\in\Qb$, the polynomial $S(x_1, x_2) = R(\zeta t_0^n, x_1, x_2)$ is a non-constant element of $\Qb[x_1^{\pm 1},x_2^{\pm1}]$.
    
    By assumption, $P({R}(\zeta t^n, x_1, x_2),{R}(\zeta t^{n'}, x_1', x_2')) = 0$ and hence, 
    \[P(S( x_1, x_2),S(x_1', x_2'))\]
    is the zero polynomial. Since $S$ is non-trivial, $S( x_1, x_2)$ and $S( x_1', x_2')$ are algebraically independent, so $P$ must be the zero polynomial, a contradiction.

    Thus $Q$ satisfies the hypotheses of \Cref{thm:satake}, and the result follows.
\end{proof}

\begin{remark}\label{rmk:symmetric}
    Suppose that $f$ and $f'$ are elliptic modular forms. Then, taking $r = \Sym^m\:\GL_2\to \GL_{2m+1}$, we have $r_p = a_{p^m}$. Thus, from the modular forms analogue of \Cref{thm:l-function}, we deduce that for a non-zero polynomial $P\in\Qb[x,y]$, if $P(a_{p^m},a_{p^m}')=0$ for a positive density of primes $p$ then $f$ is a twist of $f'$. This result strengthens \Cref{cor:apn}. A similar result should hold for Siegel modular forms too, however, the proof will be more convoluted. Indeed, if we take $r = \Sym^m\:\Gf\to \GL_{{m+3}\choose m}$,  then $r_p$ is the $p^m$-th coefficient of $L_{\mathrm{spin}}(\pi, s) = L(\epsilon, 2s - u + 1)\sum_{n\ge 1} a_nn^{-s}$, which is not equal to $a_{p^m}$ unless $m = 1$. 
\end{remark}

\subsection{Distinguishing eigenforms by the distances of their eigenvalues}

Suppose that $\pi$ and $\pi'$ both have integer Hecke eigenvalues. We deduce the following result:

\begin{corollary}\label{cor:distance}
    Suppose that both $\pi$ and $\pi'$ have integer Hecke eigenvalues, that there does not exist a Dirichlet character $\chi$ such that $\Pi\cong\Pi\tensor\chi$, and let $X$ be any real number. Then
    \[\set{p : |a_p - a'_p|<X}\]
    has density $0$.
\end{corollary}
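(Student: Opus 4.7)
The plan is to convert the inequality $|a_p - a_p'| < X$ into a finite disjunction of exact polynomial identities and then apply \Cref{thm:b_p}. Since $a_p$ and $a_p'$ are integers, $a_p - a_p'$ takes only finitely many values on the set in question, so one may write
\[\set{p : |a_p - a_p'| < X} \;=\; \bigcup_{\substack{N \in \Z \\ |N| < X}} T_N, \qquad T_N := \set{p : a_p - a_p' = N},\]
as a finite union. Because upper density is subadditive on finite unions, it suffices to show that each $T_N$ has (upper) density zero.

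I would argue by contradiction. Suppose some $T_N$ has positive upper density, and consider the polynomial
\[P := a - a' - N \;\in\; \Qb\bigl[s, \tfrac{1}{s}, s', \tfrac{1}{s'}, a, b, a', b'\bigr].\]
This $P$ is non-zero and, being independent of $s$ and $s'$, is trivially coprime to $s^{\kappa} - {s'}^{\kappa'}$. By construction, $P$ vanishes at the tuple $\bigl(\epsilon(p^2)p^{k_1+k_2-3},\, \epsilon'(p^2)p^{k_1'+k_2'-3},\, a_p, b_p, a_p', b_p'\bigr)$ for every $p \in T_N$. \Cref{thm:b_p} then yields $(k_1, k_2) = (k_1', k_2')$ and a Dirichlet character $\chi$ with $\pi \simeq \pi' \otimes \chi$, contradicting the standing hypothesis in this section that $f$ and $f'$ are not twists of one another.

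There is no serious obstacle in the argument: the entire content reduces to \Cref{thm:b_p} after the single pigeonhole step that discretises the inequality. The only technical point to verify is the coprimality condition on $P$, which is immediate since $P$ does not involve the variables $s$ or $s'$.
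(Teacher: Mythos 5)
Your proof is correct and takes essentially the same route as the paper: both exploit integrality of the eigenvalues to convert the inequality into finitely many exact linear relations, the paper by feeding the single product polynomial $\prod_{i\in[-X,X]\cap\Z}(x-y+i)$ into \Cref{cor:trace}, you by pigeonholing on the value of $a_p-a_p'$ and applying \Cref{thm:b_p} to each $a-a'-N$. The two packagings are interchangeable (and both, like the paper, implicitly use that $\pi$ is not a twist of $\pi'$ to read the conclusion as ``density $0$'').
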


\begin{proof}
    Apply \Cref{cor:trace} to the polynomial
    \[\prod_{i\in[-X,X]\cap \Z}(x-y + i).\]
\end{proof}

More generally, we can always find a number field $E$ such that $\pi$ and $\pi'$ both have Hecke eigenvalues in its ring of integers $\O_E$. Following \cite{Silverman}*{VIII.5}, for any $x\in E$, we define a height function
\[H_E(x) = H_E([x : 1]) = \prod_{v\in M_E}\max{(|x|_v^{[E_v : \Q_{v}]},1)},\]
where $M_E$ denotes the set of places of $E$, and for $v\in M_E$, $|\cdot|_v$ is the corresponding absolute value. 
The function $H_E(x)$ has the property that for any $X\iR$, the set
\[\set{x\in \O_E : H_E(x) < X}\]
is finite (see \cite{Silverman}*{VIII.5 Thm.~5.11}). 

Note that if $E = \Q$, and $x=a/b\iQ$ with $\gcd(a, b) = 1$, then $H_E(x) = \max(|a|,|b|)$ is the usual height function \cite{Silverman}*{VIII.5 Rem.~5.5}. Thus, the following corollary generalises \Cref{cor:distance}:

\begin{corollary}
Suppose that both $\pi$ and $\pi'$ have Hecke eigenvalues in the ring of integers $\O_E$ of a number field $E$, and let $X$ be any real number. Assume that for a set of primes $p$ of positive upper density
  \[\set{p : H_E(a_p - a_p') < X}.\]
Then $(k_1, k_2) = (k_1', k_2')$ and there is Dirichlet character $\chi$ such that $\Pi\simeq\Pi'\otimes\chi$.
\end{corollary}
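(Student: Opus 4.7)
The plan is to mimic the argument used for \Cref{cor:distance}, with the finiteness of $[-X,X]\cap\Z$ replaced by the cited finiteness of $\O_E$-points of bounded height. So the first step is to invoke Silverman's theorem (\cite{Silverman}*{VIII.5 Thm.~5.11}) to conclude that the set
\[S_X := \{c \in \O_E : H_E(c) < X\}\]
is finite. Every prime $p$ in the positive-density set hypothesised in the corollary then satisfies $a_p - a_p' \in S_X$.

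Next, I would define the polynomial
\[P(x,y) := \prod_{c \in S_X}(x - y - c) \in \bar\Q[x,y].\]
Each factor $x - y - c$ is a non-zero element of the integral domain $\bar\Q[x,y]$, and since $S_X$ is finite, $P$ is a well-defined non-zero polynomial. By construction, $P(a_p, a_p') = 0$ whenever $a_p - a_p' \in S_X$, i.e.\ whenever $H_E(a_p - a_p') < X$, which holds for a set of primes of positive upper density by assumption.

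The conclusion then follows directly by applying \Cref{cor:trace} to this polynomial $P$: we obtain $(k_1,k_2) = (k_1',k_2')$ and a Dirichlet character $\chi$ with $\pi \simeq \pi'\otimes\chi$. There is no real obstacle here, since all the substantive content is packaged into \Cref{cor:trace}; the only point demanding attention is verifying that $P$ is non-zero in $\bar\Q[x,y]$, which is automatic as explained above.
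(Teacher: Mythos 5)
Your proof is correct and is essentially identical to the paper's: both apply \Cref{cor:trace} to the product of the linear factors $x-y-c$ over the finite set of $c\in\O_E$ with $H_E(c)<X$, whose finiteness is exactly Silverman's theorem as cited. (The sign difference between your $x-y-c$ and the paper's $x-y+i$ is immaterial since $H_E$ is invariant under negation.)
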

\begin{proof}
    Apply \Cref{cor:trace} to the polynomial
    \[\prod_{\substack{i\in\O_E \\ H_E(i)<X}}(x-y + i).\]
\end{proof}

\section{Images of Galois representations}

\subsection{The group $\Gf$}

For a ring $R$, let
        \[\Gf(R) = \set{\gamma\in\GL_4(R): \gamma^t J \gamma = \nu J,\ \nu \in R\t},\]
        where  $J = \br{\begin{smallmatrix}
        	0 & 0 & 0 & 1\\
        	0 & 0 & 1 & 0\\
        	0 & -1 & 0 & 0\\
        	-1 & 0 & 0 & 0
        \end{smallmatrix}}$. For $\gamma\in \Gf(R)$, we call the constant $\nu$ the \emph{similitude} of $\gamma$ and denote it $\simil(\gamma)$. Let $\Sp_4$ denote the subgroup of $\Gf$  of elements with $\simil(\gamma)= 1$. We view $\Gf$ as an affine algebraic group over $\Spec\Qb$. Explicitly, we have
        \[\Gf = \Spec\O(\Gf),\]
        where
        \begin{equation*}
            \O(\Gf) = \frac{\Qb[\set{a_{ij} : 1\le i,j\le 4}, t]}{\left\langle\langle \sum_{j,k}ta_{ij}J_{jk}a_{kl} - J_{il}\rangle : 1 \le i < l \le 4\right\rangle}.
        \end{equation*}

\subsection{Galois representations attached to Siegel modular forms}

Let $\pi$ and $\pi'$ be as in the introduction. In particular, we assume throughout that $\pi$ and $\pi'$ are of general type and that neither has CM, RM, nor arises as a symmetric cube lift.

Let $E$ denote the number field generated by central characters $\epsilon, \epsilon'$ and the Hecke eigenvalues of $\pi$ and $\pi'$. We fix once and for all an embedding $E\hookrightarrow \Qb$ and embeddings $\Qb\hookrightarrow\Qlb$ for each prime $\l$.

By the work of Taylor, Laumon and Weissauer \cites{taylor1993, Laumon, Weissauer, Weissauersymplectic} when $k_2\ge 2$, and Taylor \cite{taylor1991galois} when $k_2=2$ (see also \cite{mok2014galois}), for each prime $\l$, attached to $\pi$, there exists a continuous semisimple symplectic Galois representation
	\[\rho_\l\:\Ga\Q\to \Gf(\Qlb)\]
	that is unramified at all primes $p\notin \{\l\}\cup S$, where $S$ is the finite set of primes where $\pi$ is ramified, and is characterised by the property
	\[\Tr\rho_{\l}(\Frob_p)= a_p,\qquad \simil\rho_{\l}(\Frob_p) = \epsilon(p)p^{k_1 + k_2 -3},\]
	for all primes $p\notin \{\l\}\cup S$. Let $\rho_\l'$ be the corresponding Galois representation attached to $\pi'$.

\begin{remark}
    We have opted to work with $\l$-adic representations $\rho_\l\:\Ga\Q\to\Gf(\Qlb)$ taking values in $\Qlb$ rather than $\lambda$-adic representations $\rho_\lambda\:\Ga\Q\to\Gf(\elb)$ taking values in $\elb$. This choice leads to simpler notation, and suffices for our purposes. In doing this, our fixed embeddings $E\hookrightarrow\Qb\hookrightarrow\Qlb$ ensure that even if $\pi$ and $\pi'$ are Galois conjugates of each other, we nevertheless have $\rho_\l\not\simeq\rho_\l'$. 
\end{remark}


	By work of Ramakrishnan \cite{Ramakrishnan}, Dieulefait--Zenteno \cite{DZ} and the second author \cite{weissthesis,weiss2018image}, the image of $\rho_\l$ is generically large, in the following sense. Let $\LL$ be the set of rational primes $\l$ such that $\l\ge 5$, such that $\rho_\l|_{\Ql}$ is de Rham if $\l\mid N$ and crystalline if $\l\nmid N$. Then $\LL$ is just the set of primes $\l\ge 5$ if $k_2>2$, while if $k_2 = 2$, then $\LL$ has Dirichlet density $1$ \cite{weiss2018image}*{Thm.\ 1.1}.
	
	\begin{theorem}\label{thm:large-image}
        \begin{enumerate}[leftmargin=*]
			\item If $\l\in \LL$, then $\rho_\l$ is absolutely irreducible.
			\item For all but finitely many $\l\in\LL$, the image of $\rho_\l$ contains a subgroup conjugate to $\Sp_4(\Zl)$.
		\end{enumerate}
	\end{theorem}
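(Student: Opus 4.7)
The overall strategy handles the two parts separately: part~(1) follows formally from Arthur's endoscopic classification together with the type~\textbf{(G)} hypothesis, while part~(2) requires a detailed analysis of possible images in the spirit of Serre's open image theorem for elliptic modular forms. For part~(1), I would appeal to Arthur's classification for $\Gf$ combined with the assumption that $\pi$ is of general type, which together imply that the weak functorial lift $\Pi$ of $\pi$ to $\GL_4(\AQ)$ is a cuspidal automorphic representation. Since $\rho_\l$ agrees with the Galois representation attached to such a regular algebraic essentially self-dual cuspidal representation of $\GL_4$, absolute irreducibility of $\rho_\l$ follows from the standard results on Galois representations attached to cuspidal $\GL_n$ automorphic representations.

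For part~(2), I would first show that the Zariski closure $G_\l^{\mathrm{Zar}}$ of the image of $\rho_\l$ is all of $\Gf$. Since $\rho_\l$ is absolutely irreducible by part~(1) and the similitude $\simil\circ\rho_\l$ has infinite image, $G_\l^{\mathrm{Zar}}$ is a reductive subgroup of $\Gf$ surjecting via $\simil$ onto $\GG_m$. Using that $\rho_\l$ has distinct Hodge--Tate weights at $\l$ (which requires $\l\in\LL$), one also sees that $G_\l^{\mathrm{Zar}}$ contains a regular semisimple element. The classification of such subgroups yields a short list of possibilities: $\Gf$ itself; the Levi $\GL_2\times_{\GG_m}\GL_2$ corresponding to Yoshida-type endoscopic lifts, ruled out by type~\textbf{(G)}; the image of $\GL_2$ under a symmetric cube embedding, ruled out by the no-symmetric-cube hypothesis; a restriction-of-scalars subgroup $\operatorname{Res}^F_\Q\GL_2$ for a real quadratic field $F$, ruled out by the no-RM hypothesis; and a torus, ruled out by absolute irreducibility together with the no-CM hypothesis. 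Hence $G_\l^{\mathrm{Zar}}=\Gf$.

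Having established Zariski density, I would then upgrade to $\l$-adic openness and residual largeness. A Pink-type argument shows that the Lie algebra of the $\l$-adic image equals $\mathfrak{gsp}_4$, so the image is open in $\Gf(\Zl)$. To improve ``open'' to ``contains a conjugate of $\Sp_4(\Zl)$'' for all but finitely many $\l\in\LL$, it suffices to show that the residual image $\bar\rho_\l(\Ga\Q)$ contains $\Sp_4(\Fl)$ modulo scalars. Here I would invoke the Aschbacher--Liebeck--Seitz classification of maximal subgroups of $\Gf(\Fl)$, ruling out each class (reducible, imprimitive, tensor-decomposable, subfield, cross-characteristic, and sporadic) using some combination of absolute irreducibility, the no-CM/RM/sym-cube hypotheses, Weil--Ramanujan bounds on Frobenius traces, and incompatibility with the residual similitude character. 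The main obstacle is precisely this systematic elimination, most subtly the subfield case (where the mod-$\l$ image could factor through $\Gf(\Fl_0)$ for a proper subfield) and the low-weight case $k_2=2$, where the $\l$-adic Hodge-theoretic input used to rule out extension-type obstructions is available only on a density-one set of primes---this is exactly why $\LL$ is required to have density one rather than cofinite complement when $k_2=2$.
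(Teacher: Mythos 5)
The paper does not reprove this theorem: part (1) is quoted verbatim as \cite{weiss2018image}*{Thm.~1.1}, and part (2) is deduced from \cite{weiss2018image}*{Thm.~1.2} (residual image contains $\Sp_4(\F_\l)$ for all but finitely many $\l\in\LL$) together with the standard lifting lemma that for $\l\ge 5$ a closed subgroup of $\Sp_4(\Z_\l)$ surjecting onto $\Sp_4(\F_\l)$ equals $\Sp_4(\Z_\l)$. Your proposal is therefore a reconstruction of the content of the cited references rather than an alternative to the paper's argument. For part (2) your outline matches the strategy of Dieulefait, Dieulefait--Zenteno and Weiss reasonably well, and you correctly identify the decisive reduction (residual surjectivity onto $\Sp_4(\F_\l)$, then lift), which is exactly the step the paper makes explicit.

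The genuine weak point is part (1). Absolute irreducibility does not ``follow from the standard results on Galois representations attached to cuspidal $\GL_n$ automorphic representations'': irreducibility of such representations is open in general for $n\ge 3$, and the available results for $\GL_4$ require either regularity of the Hodge--Tate weights or substantial case analysis. When $k_2=2$ the Hodge--Tate weights are $\{0,0,k_1-1,k_1-1\}$, i.e.\ irregular, so the regularity-based arguments you are implicitly invoking do not apply; this case is precisely the main theorem of \cite{weiss2018image}, and it is the reason $\LL$ is only a density-one set of primes rather than all $\l\ge 5$. In other words, the density-one caveat and the hard Hodge-theoretic input already enter in part (1), not only in the residual-image analysis of part (2) as your write-up suggests. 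As stated, your part (1) is a citation to results that do not exist in the generality you need.
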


    \begin{proof}
        Part $(i)$ is exactly \cite{weiss2018image}*{Thm.~1.1}. Part $(ii)$ follows from \cite{weiss2018image}*{Thm.~1.2} and the fact that if $\l\ge 5$ and $X$ is a closed subgroup of $\Sp_4(\Zl)$ that surjects onto $\Sp_4(\Fl)$, then $X = \Sp_4(\Zl)$ (c.f.\ \cite{serre-abelian}*{Lemma 3.4.3}).
    \end{proof}

Now, let $G_\l$ denote the Zariski closure of  $\im(\rho_\l)$ in $\GSp_{4/\Qlb}$. Then $G_\l$ is a Zariski closed subgroup of $\Gf(\Qlb)$ that contains $\Sp_4(\Zl)$ and whose similitude is non-trivial. We immediately deduce the following corollary:

\begin{corollary}\label{cor:zariski-closed}
    For all but finitely many $\l\in \LL$, we have $G_\l = \Gf(\Qlb)$.
\end{corollary}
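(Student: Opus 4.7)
The plan is to leverage \Cref{thm:large-image} to show that the Zariski closure $G_\l$ contains the full symplectic group $\Sp_4$, and then rule out the possibility that $G_\l$ is a proper subgroup containing $\Sp_4$ by appealing to the non-triviality of the similitude character of $\rho_\l$.

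First, by \Cref{thm:large-image}(ii), for all but finitely many $\l\in\LL$, the image of $\rho_\l$ contains a subgroup conjugate to $\Sp_4(\Zl)$. Since the Zariski closure is invariant under conjugation, we may assume $\im(\rho_\l)\supseteq\Sp_4(\Zl)$. The next step is to show that the Zariski closure of $\Sp_4(\Zl)$ inside $\Sp_{4/\Qlb}$ is all of $\Sp_4$. This is a standard fact from $\l$-adic Lie theory: $\Sp_4(\Zl)$ is an open subgroup of the $\l$-adic Lie group $\Sp_4(\Ql)$, so its Lie algebra (as an $\l$-adic Lie group) is the full $\mathfrak{sp}_4(\Ql)$. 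Hence its Zariski closure is a connected closed algebraic subgroup of $\Sp_4$ whose Lie algebra equals $\mathfrak{sp}_4$, and therefore must be $\Sp_4$ itself. Consequently $G_\l\supseteq\Sp_4$.

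Next, I would classify the closed algebraic subgroups of $\Gf$ containing $\Sp_4$. Since $\Sp_4$ is the kernel of the similitude character $\simil\colon\Gf\to\GG_m$ and has codimension one, any such subgroup is of the form $\simil\ii(H)$ for a closed algebraic subgroup $H\subseteq\GG_m$. The closed subgroups of $\GG_m$ over $\Qlb$ are precisely $\GG_m$ itself and the finite groups $\mu_n$ for $n\geq 1$. Thus either $G_\l=\Gf$ (as desired), or $\simil(G_\l)=\mu_n$ for some $n\geq 1$, in which case
\[\br{\epsilon(p^2)p^{k_1+k_2-3}}^n = \simil(\rho_\l(\Frob_p))^n = 1\]
for every prime $p\nmid \l N$. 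Since $k_1\geq k_2\geq 2$, we have $k_1+k_2-3\geq 1$, so the quantity $p^{n(k_1+k_2-3)}$ is unbounded as $p$ varies, whereas $\epsilon(p^2)^n$ is a root of unity. This is a clear contradiction, forcing $G_\l=\Gf(\Qlb)$.

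The only non-trivial step is the Zariski density claim for $\Sp_4(\Zl)$, and even this is routine once one invokes the correspondence between connected algebraic subgroups and Lie subalgebras; the rest of the argument is a clean classification followed by a trivial growth estimate on $p^{k_1+k_2-3}$.
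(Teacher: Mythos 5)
Your proof is correct and follows the same route the paper intends: the paper simply asserts the corollary is immediate from the facts that $G_\l$ contains $\Sp_4(\Zl)$ (hence its Zariski closure $\Sp_4$) and has non-trivial similitude, and you have just filled in those two steps — Zariski density of $\Sp_4(\Zl)$ in $\Sp_4$ and the classification of overgroups of $\Sp_4$ via the similitude character. No gaps; the elaboration is accurate.
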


\subsection{The algebraic group $\Gkk$}\label{sec:gkk}

Recall that 
 \[\Gkk = \set{(\gamma, \gamma')\in \Gf\times\Gf : \simil(\gamma)^{\kappa} = \simil(\gamma')^{\kappa'}},\]
where $\kappa$ and $\kappa'$ are the integers defined in \eqref{def:kappa}.
Let $d = \gcd(\kappa,\kappa')$, and for each root of unity $\zeta\in\mu_d$, let 
\[\Gkk^\zeta = \set{(\gamma, \gamma')\in \Gf\times\Gf : \simil(\gamma)^{\kappa/d} = \zeta\simil(\gamma')^{\kappa'/d}}. \]
Then there is a coset decomposition
 \[\Gkk = \bigsqcup_{\zeta\in\mu_d}\Gkk^\zeta.\]
In this subsection, we prove that $\Gkk^1$ is connected, so that this decomposition decomposes $\Gkk$ into its connected components.

As an affine algebraic group over $\Qb$, $\Gf\times\Gf = \Spec\O(\Gf\times\Gf)$, where
\[\O(\Gf\times\Gf) = \frac{\Qb\left[\set{a_{ij}, a'_{ij} : 1\le i,j\le 4}, t, t'\right]}{\left\langle\langle \sum_{j,k}ta_{ij}J_{jk}a_{kl} - J_{il}, \sum_{j,k}t'a_{ij}'J_{jk}a_{kl}' - J_{il}\rangle : 1 \le i < l \le 4\right\rangle} \]

\begin{proposition}\label{prop:ufd}
    Let $G = \Gf\times\Gf$, viewed as an algebraic group over $\Qb$. Then $\O(G)$ is a unique factorisation domain. 
\end{proposition}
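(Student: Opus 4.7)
The plan is to reduce the claim to the classical theorem---due to Popov---that the coordinate ring of a connected, simply connected semisimple linear algebraic group is a unique factorization domain.

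The key step will be to show that $\Gf$ is isomorphic \emph{as a variety} (though not as a group) to $\Sp_4 \times \GG_m$. I would exhibit an explicit section $s\colon \GG_m \to \Gf$ of the similitude character by setting $s(t) = \diag(t,t,1,1)$; a direct matrix computation gives $s(t)^t J s(t) = tJ$, so $s(t) \in \Gf$ with $\simil(s(t)) = t$. Since $\ker(\simil) = \Sp_4$, this section yields a morphism of varieties
\[
\Sp_4 \times \GG_m \longrightarrow \Gf, \qquad (g, t) \longmapsto g \cdot s(t),
\]
whose inverse $\gamma \mapsto (\gamma \cdot s(\simil \gamma)^{-1},\ \simil \gamma)$ is regular because $\simil$ is a unit on $\Gf$. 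Hence $\Gf \cong \Sp_4 \times \GG_m$ as varieties (the group structure this transports to the right-hand side is a semidirect product, but this is irrelevant to the statement about $\O(G)$).

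Passing to coordinate rings and taking the product of two copies, this gives an isomorphism of $\Qb$-algebras
\[
\O(\Gf \times \Gf) \cong \O(\Sp_4 \times \Sp_4)[t,t^{-1},t',t'^{-1}].
\]
Since $\Sp_4 \times \Sp_4$ is connected, simply connected, and semisimple, Popov's theorem (see, e.g., Voskresenskii, \emph{Algebraic Groups and their Birational Invariants}) implies that $\O(\Sp_4 \times \Sp_4)$ is a UFD. A Laurent polynomial extension of a UFD is again a UFD, so the claim follows.

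The only non-elementary ingredient is Popov's theorem on the UFD property for simply connected semisimple groups; everything else reduces to a direct verification. The main conceptual point is that $\simil\colon \Gf \to \GG_m$ admits the explicit algebraic section $s$, which lets us avoid analyzing the $\mu_2$-quotient presentation $\Gf \cong (\Sp_4 \times \GG_m)/\mu_2$ and the corresponding ring of invariants under a finite group action (which would not straightforwardly inherit the UFD property).
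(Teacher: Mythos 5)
Your proof is correct, but it takes a genuinely different route from the paper. The paper computes $\Pic(\Gf\times\Gf)=0$ directly, via Iversen's exact sequence relating $X^*(\Gf\times\Gf)$, $X^*(\GG_m^2)$ and $\Pic(\PGSp_4\times\PGSp_4)=X^*(\mu_2\times\mu_2)$, and then invokes $\Cl=\Pic$ for smooth varieties. You instead trivialise the similitude fibration: the section $s(t)=\diag(t,t,1,1)$ does satisfy $s(t)^tJs(t)=tJ$ for the paper's antidiagonal $J$, so $(g,t)\mapsto g\cdot s(t)$ really is an isomorphism of varieties $\Sp_4\times\GG_m\xrightarrow{\sim}\Gf$ with the regular inverse you describe, giving $\O(\Gf\times\Gf)\cong\O(\Sp_4\times\Sp_4)[t^{\pm1},{t'}^{\pm1}]$; Popov's theorem for the connected, simply connected, semisimple group $\Sp_4\times\Sp_4$ and stability of the UFD property under polynomial extension and localisation then finish the argument. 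The two proofs rest on the same underlying fact (triviality of the Picard group of a simply connected semisimple group), but yours avoids the Picard computation for the adjoint quotient entirely and is more explicit, at the cost of using the special feature that $\simil$ splits; the paper's homological argument is more formulaic and would adapt to situations where no such section is available.
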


\begin{proof}
    Since $G$ is a non-singular algebraic variety, we have $\Pic(G) = \Cl(G)$ \cite{hartshorne}*{Cor.~II.6.16}. Since $\Cl(G) = 0$ if and only if $\O(G)$ is a UFD, it is sufficient to show that $\Pic(G) = 0$.

    We adapt the argument of \cite{youcis}. By \cite{iversen}*{Prop.~2.6}, there is an exact sequence
    \[0\to X^*(\PGSp_4\times\PGSp_4)\to X^*(G) \to X^*(\GG_m^2)\to \Pic(\PGSp_4\times\PGSp_4)\to\Pic(G) \to 0, \]
    where $X^*(G) = \Hom(G, \GG_m)$. Since $\PGSp_4$ is semisimple, we have $X^*(\PGSp_4\times\PGSp_4) = 0$. 
    Moreover, $X^*(\GG_m^2)\simeq\Z^2$ and
    \[X^*(\Gf) = X^*(\Gf/\Sp_4) = X^*(\GG_m) \simeq\Z,\]
    so $X^*(G) \simeq\Z^2$. Thus, the exact sequence becomes
    \[0\to0\to \Z^2\to \Z^2\to \Pic(\PGSp_4\times\PGSp_4)\to \Pic(G)\to 0.\]
    Here, the map $\Z^2\to\Z^2$ is multiplication by $2$: any character $G\to \GG_m$ factors through the similitude, and the composition $\GG_m\to \Gf\xrightarrow{\simil}\GG_m$ is the map $x\mapsto x^2$.
    
    By \cite{iversen}*{Prop.~3.6}, we have $\Pic(\PGSp_4\times\PGSp_4) = X^*(\Pi^1(\PGSp_4\times\PGSp_4))$.  Since the map $\Sp_4\to \PGSp_4$ is a central isogeny with kernel $\mu_2$ and since $\Sp_4$ is simply connected, it 
 follows that $\Pi^1(\PGSp_4)\simeq\mu_2$. Thus,
    \[X^*(\Pi^1(\PGSp_4\times\PGSp_4)) = X^*(\mu_2\times\mu_2) = (\Z/2\Z)^2.\]
    It follows from the exact sequence that $\Pic(G) = 0$.
\end{proof}

\begin{corollary}\label{prop:comp-connected}
    Suppose that $\gcd(\kappa, \kappa') = 1$. Then $\Gkk$ is connected.
\end{corollary}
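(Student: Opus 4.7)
My plan is to construct an explicit isomorphism of varieties $\Gkk \cong \Sp_4 \times \Sp_4 \times \GG_m$, from which connectedness is immediate. The argument proceeds in two steps: first, splitting off the $\Sp_4$ factors using a section of the similitude; second, handling the resulting subtorus of $\GG_m^2$ via Bezout.

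First, I would observe that $\simil\: \Gf \to \GG_m$ admits an algebraic section. Namely, the morphism $s\: \GG_m \to \Gf$ given by $s(c) = \diag(c, c, 1, 1)$ satisfies $s(c)^t J s(c) = cJ$, as is verified directly from the explicit form of $J$, so $s(c) \in \Gf$ and $\simil(s(c)) = c$. Consequently, the morphism
\[\Sp_4 \times \GG_m \to \Gf, \qquad (\sigma, c) \mapsto \sigma \cdot s(c)\]
is an isomorphism of varieties, with inverse $\gamma \mapsto (\gamma \cdot s(\simil\gamma)^{-1}, \simil\gamma)$. Taking the product with itself yields a variety isomorphism $\Gf \times \Gf \cong \Sp_4 \times \Sp_4 \times \GG_m \times \GG_m$ that carries the pair of similitudes to the projection onto $\GG_m^2$. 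Under this isomorphism, $\Gkk$ corresponds to $\Sp_4 \times \Sp_4 \times H$, where
\[H := \set{(c, c') \in \GG_m^2 : c^\kappa = (c')^{\kappa'}}.\]

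It then suffices to prove that $H$ is connected when $\gcd(\kappa, \kappa') = 1$; in fact I would show $H \cong \GG_m$. By Bezout, choose integers $a, b$ with $a\kappa + b\kappa' = 1$, and define
\[\varphi\: \GG_m \to H,\ u \mapsto (u^{\kappa'}, u^\kappa), \qquad \psi\: H \to \GG_m,\ (c, c') \mapsto c^b (c')^a.\]
Both are regular morphisms, and a short calculation using $a\kappa + b\kappa' = 1$ and the defining relation $c^\kappa = (c')^{\kappa'}$ of $H$ shows that $\varphi$ and $\psi$ are mutually inverse. Since $\Sp_4$ is connected (being simply connected and semisimple), the product $\Sp_4 \times \Sp_4 \times \GG_m \cong \Gkk$ is connected.

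The argument consists of two short and routine computations---producing the section $s$ and verifying that $\varphi$ and $\psi$ are mutually inverse---so no step poses a serious obstacle. I note that one could alternatively deduce the result from Proposition \ref{prop:ufd} by showing that $t^\kappa - (t')^{\kappa'}$ is a prime element of $\O(\Gf \times \Gf)$, but the direct approach above avoids any appeal to the UFD property.
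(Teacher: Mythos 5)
Your proof is correct, and it takes a genuinely different route from the paper's. The paper works ring-theoretically: it first shows (via a Picard group computation using Iversen's exact sequences) that $\O(\Gf\times\Gf)$ is a UFD, then argues that $t^{\kappa}-{t'}^{\kappa'}$ is irreducible, hence prime, so that $\O(\Gkk)$ is an integral domain and $\Gkk$ is irreducible. Your argument is more elementary and more explicit: the section $s(c)=\diag(c,c,1,1)$ of the similitude does satisfy $s(c)^tJs(c)=cJ$ for the given $J$, the induced splitting $\Gf\times\Gf\cong\Sp_4\times\Sp_4\times\GG_m^2$ carries $\Gkk$ to $\Sp_4\times\Sp_4\times H$ with $H=\{c^{\kappa}=(c')^{\kappa'}\}$, and your Bezout maps $\varphi,\psi$ are indeed mutually inverse (the verification of $\varphi\circ\psi=\mathrm{id}$ uses the defining relation of $H$ exactly as you indicate). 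What your approach buys is a stronger and more concrete conclusion --- $\Gkk\cong\Sp_4\times\Sp_4\times\GG_m$ as a variety, hence irreducible and even rational --- while bypassing the UFD/Picard machinery entirely; it also adapts immediately to identify the components $\Gkk^{\zeta}$ in the general case, since each corresponds to a coset of $H$ in $\GG_m^2$. What the paper's approach buys is that the UFD statement is a reusable structural fact about $\O(\Gf\times\Gf)$, though in this paper it appears to serve only this corollary. Both arguments establish irreducibility, which is what the subsequent description of the component decomposition requires.
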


\begin{proof}
    In general, we have
    \[\O(\Gkk) = \frac{\O(\Gf\times\Gf)}{\langle t^{\kappa} - {t'}^{\kappa'}\rangle}.\]

    Since $\gcd(\kappa, \kappa') = 1$, the polynomial $t^{\kappa} - {t'}^{\kappa'}$ is irreducible in $\O(\Gf\times\Gf)$.
    
    By \Cref{prop:ufd}, it follows that $t^{\kappa} - {t'}^{\kappa'}$ is prime, and thus that $\O(\Gkk)$ is an integral domain. So $\Gkk$ is irreducible, and in particular, connected.
\end{proof}


\begin{corollary}\label{prop:connected-comps}
    The decomposition
    \[\Gkk = \bigsqcup_{\zeta\in\mu_d}\Gkk^\zeta,\]
    decomposes $\Gkk$ into connected components, with identity connected component $\Gkk^1 = \G_{\kappa/d,\kappa'/d}$. 
\end{corollary}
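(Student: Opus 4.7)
The plan is to realize $\Gkk^1$ as a connected normal subgroup of $\Gkk$ and each $\Gkk^\zeta$ as an explicit coset of it, so that the claimed decomposition becomes the coset decomposition of a surjection $\Gkk\twoheadrightarrow\mu_d$.

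First I would check that the decomposition is well-defined and disjoint. For any $(\gamma,\gamma')\in\Gkk$, the element $\simil(\gamma)^{\kappa/d}\simil(\gamma')^{-\kappa'/d}$ has $d$-th power equal to $\simil(\gamma)^{\kappa}\simil(\gamma')^{-\kappa'}=1$, and hence lies in $\mu_d$ and determines the unique $\zeta\in\mu_d$ with $(\gamma,\gamma')\in\Gkk^\zeta$. This realizes $\Gkk^1$ as the kernel of the character $\Gkk\to\mu_d$ sending $(\gamma,\gamma')\mapsto\simil(\gamma)^{\kappa/d}\simil(\gamma')^{-\kappa'/d}$, hence as a closed normal subgroup of $\Gkk$. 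Reading off the definitions, $\Gkk^1=\G_{\kappa/d,\kappa'/d}$, and since $\gcd(\kappa/d,\kappa'/d)=1$, Corollary \ref{prop:comp-connected} gives that $\Gkk^1$ is connected.

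Next, I would produce an explicit representative of each coset. For $\zeta\in\mu_d$, choose $s\in\Qb^\times$ with $s^{\kappa/d}=\zeta$ and let $g_\zeta=(\diag(s,s,1,1),I_4)$. A short verification with the form $J$ shows that $\diag(s,s,1,1)\in\Gf(\Qb)$ with similitude $s$, so $g_\zeta$ has similitudes $(s,1)$ and therefore lies in $\Gkk^\zeta$. Consequently, $\Gkk^\zeta=g_\zeta\cdot\Gkk^1$ as a coset. Since translation by $g_\zeta$ is an isomorphism of varieties, each $\Gkk^\zeta$ is connected; it is also closed in $\Gkk$, and open as the complement of the finitely many other cosets. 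Hence the $\Gkk^\zeta$ are precisely the connected components of $\Gkk$. Since $(I_4,I_4)\in\Gkk^1$, the identity component is $\Gkk^1$.

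The argument is essentially formal given Corollary \ref{prop:comp-connected}; the only concrete point to verify is the non-emptiness of each $\Gkk^\zeta$, which is handled by the diagonal construction above, so I do not anticipate any real obstacles.
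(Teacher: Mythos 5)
Your proof is correct and follows exactly the route the paper intends (the paper leaves this corollary unproved): identify $\Gkk^1$ with $\G_{\kappa/d,\kappa'/d}$, apply Corollary \ref{prop:comp-connected} since $\gcd(\kappa/d,\kappa'/d)=1$, and translate by explicit coset representatives to get the remaining components. The explicit element $(\diag(s,s,1,1),I_4)$ does lie in $\Gkk^\zeta$ as you claim, so no gaps.
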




\subsection{Joint large image and the proof of \Cref{thm:joint-large-image}}

For each prime $\l$, let
\[R_\l = \rho_\l\times\rho_\l'\:\Ga\Q\to \Gkk(\Qlb)\sub\Gf(\Qlb)\times\Gf(\Qlb)\]
be the product Galois representation associated to $f$ and $f'$. Recall that $d = \gcd(\kappa, \kappa')$.

\begin{lemma}\label{lem:all-components}
    Let $\zeta\in\mu_d$ and let $\Gkk^{\zeta}$
    be the corresponding connected component of $\Gkk$. Then $\im(R_\l)\cap\Gkk^{\zeta}(\Qlb)\ne \emptyset$.
\end{lemma}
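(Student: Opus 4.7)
My plan is to identify the connected component of $\Gkk$ containing $R_\l(\sigma)$ as being determined by an explicit character $\xi\colon\Ga\Q\to \mu_d$, and then to verify that this character is surjective, so that every component is hit by some Frobenius via Chebotarev.

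To set this up, let $\nu,\nu'\colon\Ga\Q\to\Qlb^\times$ denote the similitude characters of $\rho_\l$ and $\rho_\l'$, so that $\nu(\Frob_p) = \epsilon(p^2)p^{k_1+k_2-3}$ and $\nu'(\Frob_p) = \epsilon'(p^2)p^{k_1'+k_2'-3}$ for $p\nmid\l NN'$. By the very definition of $(\kappa,\kappa')$ we have $\nu^\kappa = (\nu')^{\kappa'}$, so
\[\xi \;:=\; \nu^{\kappa/d}\cdot(\nu')^{-\kappa'/d}\]
satisfies $\xi^d = 1$ and hence takes values in $\mu_d$. Unwinding the definition of $\Gkk^\zeta$, we have $R_\l(\sigma)\in\Gkk^\zeta(\Qlb)$ if and only if $\xi(\sigma) = \zeta$. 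Therefore, if $\xi$ is surjective onto $\mu_d$, Chebotarev produces, for every $\zeta\in\mu_d$, infinitely many unramified primes $p$ with $\xi(\Frob_p) = \zeta$, and for any such $p$ the element $R_\l(\Frob_p)$ lies in $\im(R_\l)\cap\Gkk^\zeta(\Qlb)$.

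To check surjectivity I would compute $\xi$ explicitly at Frobenius. Writing $n = \gcd(k_1+k_2-3,k_1'+k_2'-3)$ and using the formula \eqref{def:kappa}, we have $\kappa/d = (k_1'+k_2'-3)/n$ and $\kappa'/d = (k_1+k_2-3)/n$, so the powers of $p$ cancel and one finds
\[\xi(\Frob_p) \;=\; \frac{\epsilon(p^2)^{(k_1'+k_2'-3)/n}}{\epsilon'(p^2)^{(k_1+k_2-3)/n}}.\]
This is exactly the Dirichlet character appearing in \eqref{def:kappa}, which by construction has order precisely $d$; its image is therefore a subgroup of $\mu_d$ of size $d$, i.e.\ all of $\mu_d$. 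There is no substantial obstacle in the argument: it reduces to a direct calculation plus Chebotarev, and the only delicate point is the bookkeeping that identifies the order of $\xi$ with $d = \gcd(\kappa,\kappa')$, which is forced by the minimality built into the definition of $(\kappa,\kappa')$.
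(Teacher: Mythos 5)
Your proof is correct and follows essentially the same route as the paper's: both arguments observe that the power of $p$ cancels in $\simil\rho_\l(\Frob_p)^{\kappa/d}\cdot\simil\rho_\l'(\Frob_p)^{-\kappa'/d}$, identify the residual Dirichlet character $\epsilon^{2\kappa/d}(\epsilon')^{-2\kappa'/d}$ as having order exactly $d$ by the minimality in \eqref{def:kappa}, and then invoke Chebotarev/Dirichlet to find a Frobenius landing in each component $\Gkk^{\zeta}$. Your packaging of this as a surjective character $\xi\colon\Ga\Q\to\mu_d$ is a clean reformulation of the same computation.
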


\begin{proof}
    Let $p$ be a prime. Since
    \[(\epsilon(p)p^{k_1 + k_2 -3})^{\kappa} = (\epsilon'(p)p^{k_1' + k_2' -3})^{\kappa'},\]
    it follows that
    \[\epsilon(p)^{\kappa} = \epsilon'(p)^{\kappa'}\]
    and
     \[(p^{k_1 + k_2 -3})^{\kappa} = (p^{k_1' + k_2' -3})^{\kappa'}.\]
    Therefore, we have
         \[(p^{k_1 + k_2 -3})^{\kappa/d} = (p^{k_1' + k_2' -3})^{\kappa'/d}.\]
    By the minimality of $\kappa$ and $\kappa'$, the character $\epsilon^{\kappa/d}(\epsilon')^{-\kappa'/d}$ is a Dirichlet character of order $d$. Hence, there exists a prime $p\nmid\l N$ such that $\epsilon(p)^{\kappa/d} = \zeta\epsilon'(p)^{\kappa'/d}$. Then \[(\simil\rho_\l(\Frob_p))^{\kappa/d} = \epsilon(p)^{\kappa/d}(p^{k_1 + k_2 - 3})^{\kappa/d}= \zeta\epsilon'(p)^{\kappa'/d}(p^{k_1' + k_2' -3})^{\kappa'/d} = \zeta(\simil\rho_\l'(\Frob_p))^{\kappa'/d}.\]
    Thus, $R_\l(\Frob_p)\in\Gkk^\zeta(\Qlb)$.
\end{proof}

\begin{proof}[Proof of \Cref{thm:joint-large-image}]
Fix a prime $\l\in\LL$. By \Cref{lem:all-components}, we see that $\Gamma_\l = \Gkk(\Qlb)$ if and only if their identity connected components $\Gamma_\l^\circ$ and $\Gkk^1(\Qlb)$ are equal.


    By \Cref{cor:zariski-closed}, each of the two projections of $\Gamma_\l^\circ$ onto $\Gf(\Qlb)$ is surjective. Hence, by Goursat's Lemma, there exist normal subgroups $N, N'$ of $\Gf(\Qlb)$ such that $\Gamma_\l^\circ$ is the graph of an isomorphism $\Gf(\Qlb)/N\cong\Gf(\Qlb)/N'$. 
    
    First suppose that $\Sp_4(\Qlb) \sub N$. Then we must also have $\Sp_4(\Qlb)\sub N'$, and the two quotients $\Gf(\Qlb)/N$ and $\Gf(\Qlb)/N'$ are contained in $\Qlb\t$. It follows that $\Gamma_\l^\circ$ is a fibre product of $\Gf(\Qlb)$ with itself over some powers of the similitude map. Thus, $\Gamma_\l^\circ = \G_{a,a'}$ for some integers $a, a'$. Since $\Gamma_\l^\circ$ is connected and contained in the connected group $\Gkk^1(\Qlb) = \G_{\kappa/d,\kappa'/d}(\Qlb)$, it follows that $\Gamma_\l^\circ = \Gkk^1(\Qlb)$. Hence, $\Gamma_\l = \Gkk(\Qlb)$.

    So suppose that $\Sp_4(\Qlb) \not\sub N$. Then we must also have $\Sp_4(\Qlb) \not\sub N'$, so $N$ and $N'$ are contained in the subgroup of scalar matrices in $\Gf(\Qlb)$. It follows that the projective image of $\Gamma^\circ_\l$ in $\PGSp_4(\Qlb)\times\PGSp_4(\Qlb)$ is isomorphic to the diagonal embedding of $\PGSp_4(\Qlb)$.
    
    We deduce that the projective Galois representations
    \[\Proj\rho_\l\:\Ga\Q\to\PGSp_4(\Qlb)\]
    and
    \[\Proj\rho_\l'\:\Ga\Q\to\PGSp_4(\Qlb)\]
    are isomorphic. It follows that $\rho_\l\simeq\rho_\l'\tensor\chi$ for some character $\chi$. Now, since $\l\in\LL$, $\rho_\l$ and $\rho_\l'$ are Hodge--Tate with Hodge--Tate weights $\{0, k_2-2, k_1-1, k_1+k_2-3\}$ and $\{0, k'_2-2, k'_1-1, k'_1+k'_2-3\}$ \cite{mok2014galois}*{Thm.~3.1, Thm.~4.11}. Comparing Hodge--Tate weights, it follows that $\chi$ must be Hodge--Tate with Hodge--Tate weight $0$, i.e.\ $\chi$ must correspond to a Dirichlet character. 

    Since $\rho_\l\simeq\rho_\l'\tensor\chi$, by the strong multiplicity one theorem for $\GL_4(\AQ)$, we have $\Pi\cong\Pi'\tensor\chi$.
\end{proof}

\begin{remark}\label{rem:quadratic}
    Suppose that $\pi$ and $\pi'$ both have trivial central character. Then since $\simil\rho_\l \simeq \simil(\rho_\l\tensor\chi) = \chi^2\simil\rho_\l$, it follows that $\chi$ must be a quadratic character.
\end{remark}

\section{Proofs of \Cref{thm:b_p,thm:galois}}

\subsection{Proof of \Cref{thm:galois}}

We first recall the following algebraic version of the Chebotarev density theorem, due to Rajan.
	\begin{theorem}\label{rajan_result}\cite[Theorem 3]{raj}
		Let $E$ be a finite extension of $\Q_\ell$ and let $\mathcal G$ be
		an algebraic group over $E$. Let $\mathcal X$ be a subscheme of $\mathcal G$ defined over $E$ that is stable
		under the adjoint action of $\mathcal G$. Suppose that 
		$$
		R : \Ga\Q \rightarrow \mathcal G(E)
		$$
		is a Galois representation that is unramified outside a finite set of primes.
		Let $\Gamma$ denote the Zariski closure of  $\im(R)$ in $\mathcal G(E)$, with identity connected component $\Gamma^\circ$ and component group $\Phi = \Gamma/\Gamma^\circ$. For each $\phi \in \Phi$, let $\Gamma^\phi$ denote the	corresponding connected component. Let
	\[
		\Psi=\{\phi \in \Phi:\Gamma^\phi \subset \mathcal X\}.\]
		Then the set of primes $p$ such that $R(\Frob_p) \in \mathcal X(E) \cap \im(R)$ has density $\frac{|\Psi|}{|\Phi|}$.
	\end{theorem}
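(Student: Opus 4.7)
The plan is to combine classical Chebotarev, applied to the finite Galois extension cut out by the component group $\Phi$, with a density-zero argument on individual components. First, I would observe that $H := R\ii(\Gamma^\circ)$ is an open normal subgroup of $\Ga\Q$ of index $|\Phi|$, corresponding to a finite Galois extension $K/\Q$ with $\Gal(K/\Q) \cong \Phi$ (the surjection $\Ga\Q\to\Phi$ exists because the Zariski dense image $\im(R)$ must hit every component of $\Gamma$). For every prime $p$ unramified in $K$ and outside the ramification set of $R$, the class of $\Frob_p$ in $\Phi$ determines the component $\Gamma^\phi$ containing $R(\Frob_p)$. The stability of $\mathcal X$ under the adjoint action of $\mathcal G$, and therefore of $\Gamma$, implies that $\Psi$ is a union of $\Phi$-conjugacy classes: conjugation by any lift $\tilde g \in \Gamma$ of $g \in \Phi$ sends $\Gamma^\phi$ to $\Gamma^{g\phi g\ii}$ and preserves containment in $\mathcal X$.

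With this setup, classical Chebotarev produces the main term. Since $\Psi$ is a union of conjugacy classes, the density of primes with $\Frob_p$ projecting into $\Psi$ equals $|\Psi|/|\Phi|$, and every such prime automatically satisfies $R(\Frob_p) \in \Gamma^\phi \subset \mathcal X$ by the definition of $\Psi$. This gives the lower bound $|\Psi|/|\Phi|$ of the claimed density, and reduces the theorem to a matching upper bound.

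The main obstacle is this upper bound: I must show that for each $\phi \notin \Psi$, the set of primes with $R(\Frob_p) \in \mathcal X \cap \Gamma^\phi$ has density zero. Fixing a preimage $\sigma_\phi \in \Ga\Q$ of $\phi$ and translating by $R(\sigma_\phi)\ii$ (an adjoint-compatible operation) transports $\mathcal X \cap \Gamma^\phi$ to a proper closed subscheme $\mathcal Y \subsetneq \Gamma^\circ$, reducing the question to a universal statement: if a Galois representation has Zariski dense image in a connected algebraic group $H$ over $E$, then its Frobenius elements meet a fixed proper closed subscheme $\mathcal Y \subsetneq H$ on a set of primes of density zero. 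I would prove this via a limiting Chebotarev argument on finite quotients: for each $n$, let $\rho_n$ denote the reduction of $R$ modulo the $n$-th congruence subgroup of $\mathcal G(\mathcal O_E)$. Classical Chebotarev applied to the finite image of $\rho_n$ shows that the density of primes with $\rho_n(\Frob_p) \in \mathcal Y(\O_E/\ell^n)$ equals $|\mathcal Y(\O_E/\ell^n) \cap \im(\rho_n)|/|\im(\rho_n)|$. Zariski density of $\im(R)$ in $H$ forces $|\im(\rho_n)|$ to grow like $|H(\O_E/\ell^n)|$, whereas the strictly smaller dimension of $\mathcal Y$ makes $|\mathcal Y(\O_E/\ell^n)|$ grow at a smaller rate; sending $n \to \infty$ drives these ratios to zero.

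The technical heart of the argument — and the place where the bulk of the work in a rigorous proof lies — is this dimension-based point-count estimate on integral models, uniformly in $n$. One must either induct on $\dim \mathcal Y$ by stratifying $\mathcal Y$ into quasi-projective smooth pieces and applying Lang--Weil type bounds, or argue more directly via $\ell$-adic volume measures on the compact $\ell$-adic Lie group $\im(R) \cap H(\O_E)$, where proper closed analytic subsets automatically have volume zero. Both routes require some care in relating the Zariski density of $\im(R)$ in $H$ to a quantitative statement about its image mod $\ell^n$.
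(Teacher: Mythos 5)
This statement is not proved in the paper at all: it is quoted verbatim from Rajan \cite{raj}*{Thm.~3} and used as a black box, so there is no in-paper proof to compare against. Judged against Rajan's original argument, your outline is essentially a faithful reconstruction: the main term comes from classical Chebotarev applied to the finite quotient $\Ga\Q\to\Phi$ (with $\Psi$ a union of conjugacy classes by adjoint-stability of $\mathcal X$), and the whole content is the upper bound, namely that for $\phi\notin\Psi$ the primes with $R(\Frob_p)\in\mathcal X\cap\Gamma^\phi$ have density zero. Of your two suggested routes for this, the $\ell$-adic volume argument is the one Rajan actually uses and is the cleaner one: the set $\mathcal X(E)\cap\im(R)\cap\Gamma^\phi$ is a closed analytic subset of the compact $\ell$-adic Lie group coset $\im(R)\cap\Gamma^\phi(E)$ cut out by polynomials that do not all vanish identically on it (else $\Gamma^\phi\subset\mathcal X$ by Zariski density), hence has Haar measure zero; one then invokes the equidistribution form of Chebotarev for the compact group $\im(R)$, which applies to closed conjugation-stable sets whose boundary has measure zero. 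Two points deserve emphasis in a full write-up: first, you do not need (and in general do not have) openness of $\im(R)\cap\Gamma^\circ(E)$ in $\Gamma^\circ(E)$ --- Zariski density alone suffices for the measure-zero claim; second, the Lang--Weil/point-counting alternative is genuinely delicate to make uniform in $n$, since $|\mathcal Y(\O_E/\ell^n)|$ for a singular $\mathcal Y$ and the growth of $|\im(\rho_n)|$ for a non-open image are both nontrivial to control, so I would not present that route as interchangeable with the volume argument.
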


    \begin{proof}[Proof of \Cref{thm:galois}]
        Fix a prime $\l\in\LL$, where $\LL$ is as in \Cref{thm:joint-large-image}. Suppose that $\Pi$ is not a character twist of $\Pi'$. 
        Then by \Cref{thm:joint-large-image} the Zariski closure $\Gamma_\l$ of $R_\l$ in $(\Gf\times\Gf)_{/\Qlb}$ is $\Gkk(\Qlb)$. 
        Hence, there exists a finite Galois extension $E/\Ql$ such that $R_\l$ takes values in $\Gkk(E)$ and such that the Zariski closure of $R_\l$ in ${\Gkk}_{/E}$ is $\Gkk(E)$. Taking $E$ large enough, we can assume that the decomposition of \Cref{prop:connected-comps} occurs over $E$.

        Now, let $\mathcal{X}$ be the vanishing set of $\phi$. Since $\phi\in\O(\Gkk)^{\Gkk}$, $\mathcal{X}$ is stable under the adjoint action of $\Gkk$. Since $\phi$ does not vanish on any connected component of $\Gkk$, it follows that $\mathcal{X}$ intersects non-trivially with every connected component of $\Gkk$.
        Hence, by \Cref{rajan_result}, the set
        \[\set{p : R_\l(\Frob_p)\in\mathcal{X}(E)}\]
        has density $0$.    
        But, by definition
        \[\set{p : R_\l(\Frob_p)\in\mathcal{X}(E)} = \set{p : \phi(R_\l(\Frob_p))=0}.\]
        It follows that this set must have density $0$.
    \end{proof}

    \subsection{Proof of \Cref{thm:b_p}}

    We deduce \Cref{thm:b_p} from \Cref{thm:galois}.
    \begin{lemma}\label{lem:non-vanishing}
     Let
		$P(s,s',a,b,a',b')\in  \Qb[s, \frac1s, s',\frac1{s'},a, b,a',b']$. Assume that $P$ is coprime to by $s^{\kappa} - {s'}^{\kappa'}$. Let $\phi$ be the element
    \[\phi\:(\gamma,\gamma')\mapsto P(\simil(\gamma),\simil(\gamma'),\tr(\gamma), \tr\std(\gamma), \tr(\gamma'), \tr\std(\gamma'))\]
    of $\O(\Gkk)^{\Gkk}$. Then $\phi$ does not vanish on any connected component of $\Gkk$.
\end{lemma}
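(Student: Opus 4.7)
The plan is to prove the contrapositive: suppose $\phi$ vanishes identically on some connected component $\Gkk^\zeta$ for some $\zeta\in\mu_d$; I will show that $P$ must share a common factor with $s^\kappa - {s'}^{\kappa'}$, contradicting the hypothesis. Recall from \Cref{prop:connected-comps} that $\Gkk^\zeta = \{(\gamma,\gamma') : \simil(\gamma)^{\kappa/d} = \zeta\simil(\gamma')^{\kappa'/d}\}$, and let
\[V_\zeta = \{(s,s',a,b,a',b')\in \GG_m^2\times\AA^4 : s^{\kappa/d} = \zeta(s')^{\kappa'/d}\}.\]
The polynomial $P$, viewed as a regular function on $\GG_m^2\times\AA^4$, pulls back via the morphism
\[\Phi_\zeta\colon \Gkk^\zeta \to V_\zeta,\qquad(\gamma,\gamma')\mapsto\bigl(\simil(\gamma),\simil(\gamma'),\tr(\gamma),\tr\std(\gamma),\tr(\gamma'),\tr\std(\gamma')\bigr),\]
to the function $\phi$. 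The plan is to show $\Phi_\zeta$ is surjective on $\Qb$-points; then the vanishing of $\phi$ on $\Gkk^\zeta$ forces $P$ to vanish on $V_\zeta$, from which the divisibility conclusion follows.

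The key step is the surjectivity of $\Phi_\zeta$. Since $\Gkk^\zeta$ decomposes as a fibre product over the relation $s^{\kappa/d}=\zeta(s')^{\kappa'/d}$, this reduces to the statement that for any $s\in\Qb\t$ and any $a, b\in\Qb$, there exists $\gamma\in\Gf(\Qb)$ with $\simil(\gamma)=s$, $\tr(\gamma)=a$, $\tr\std(\gamma)=b$. Take $\gamma$ diagonal: a direct check against the form $J$ shows that diagonal elements of $\Gf$ are exactly those of the form $\gamma=\diag(x_1,x_2,s/x_2,s/x_1)$. Setting $u=x_1+s/x_1$ and $v=x_2+s/x_2$, a short calculation with $\wedge^2\gamma$ yields
\[\tr(\gamma)=u+v,\qquad \tr\wedge^2\gamma = uv + 2s,\qquad \tr\std(\gamma) = \frac{\tr\wedge^2\gamma}{s}-1 = \frac{uv}{s}+1.\]
The conditions $\tr(\gamma)=a$, $\tr\std(\gamma)=b$ become $u+v=a$, $uv=s(b-1)$, so $u,v$ are the roots of $T^2-aT+s(b-1)\in\Qb[T]$, and then $x_1, x_2\in\Qb\t$ are roots of $T^2-uT+s$ and $T^2-vT+s$. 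This furnishes the required $\gamma$, and an identical construction for $\gamma'$ (with similitude $s'$ constrained by $s^{\kappa/d}=\zeta(s')^{\kappa'/d}$) yields surjectivity.

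With surjectivity in hand, $\phi\equiv 0$ on $\Gkk^\zeta$ implies $P\equiv 0$ on the image of $\Phi_\zeta$, which is Zariski-dense in $V_\zeta$, hence $P$ lies in the ideal of $V_\zeta$. Because $\gcd(\kappa/d,\kappa'/d)=1$, the variety $\{s^{\kappa/d}=\zeta(s')^{\kappa'/d}\}$ is the image of the one-parameter subgroup $t\mapsto(\alpha t^{\kappa'/d},t^{\kappa/d})$ of $\GG_m^2$ (for any $\alpha$ with $\alpha^{\kappa/d}=\zeta$), hence is irreducible of codimension one; equivalently, $s^{\kappa/d}-\zeta(s')^{\kappa'/d}$ is irreducible in the UFD $\Qb[s^{\pm1},{s'}^{\pm1}]$, and therefore prime in the extension $\Qb[s^{\pm1},{s'}^{\pm1},a,b,a',b']$. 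Thus this polynomial divides $P$, and since
\[s^\kappa - {s'}^{\kappa'} = \prod_{\zeta\in\mu_d}\bigl(s^{\kappa/d}-\zeta(s')^{\kappa'/d}\bigr),\]
$P$ has a common factor with $s^\kappa-{s'}^{\kappa'}$, contradicting the coprimality assumption.

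The only genuinely non-trivial step is the explicit verification that the joint invariant map $(\simil,\tr,\tr\std)\colon\Gf\to\GG_m\times\AA^2$ is surjective onto points; the remaining ingredients (irreducibility of $s^{\kappa/d}-\zeta(s')^{\kappa'/d}$ via $\gcd(\kappa/d,\kappa'/d)=1$ and the factorisation of $s^\kappa-{s'}^{\kappa'}$ over $\mu_d$) are standard.
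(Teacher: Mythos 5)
Your proof is correct and follows essentially the same route as the paper's: both arguments come down to showing that any triple $(\simil,\tr,\tr\std)=(s,a,b)$ with $s\neq 0$ is realised by some $\gamma\in\Gf(\Qb)$, combined with the fact that coprimality to $s^{\kappa}-{s'}^{\kappa'}=\prod_{\zeta\in\mu_d}(s^{\kappa/d}-\zeta{s'}^{\kappa'/d})$ prevents $P$ from vanishing on the locus $s^{\kappa/d}=\zeta{s'}^{\kappa'/d}$. The only differences are cosmetic: you argue by contrapositive and realise the invariants with a diagonal matrix $\diag(x_1,x_2,s/x_2,s/x_1)$ by solving two quadratics, whereas the paper argues directly and writes down a companion-type symplectic matrix with prescribed characteristic polynomial; you also spell out the irreducibility/Nullstellensatz step that the paper leaves implicit.
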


\begin{proof}
    By \Cref{prop:connected-comps}, any connected component of $\Gkk$ is of the form
    \[\Gkk^\zeta = \set{(\gamma, \gamma')\in \Gf\times\Gf : \simil(\gamma)^{\kappa/d} = \zeta\simil(\gamma')^{\kappa'/d}},\]
    where $d = \gcd(\kappa, \kappa')$ and $\zeta\in\mu_d$. Let $\zeta'\in\Qb$ be such that $(\zeta')^{\kappa'/d} = \zeta$. 

    Since $P$ is coprime to $s^\kappa-{s'}^{\kappa'}$, it follows that $P$ is not divisible by $s^{\kappa/d} - \zeta {s'}^{\kappa'/d}$. Hence, there exist $t, a,b,a',b'\in\Qb$ with $t\neq 0$ and $P(\zeta't^{\kappa'},t^{\kappa},a,b,a',b')\ne 0$. Let
    \[h(x) = x^4 + ax^3 + bx^2 + \zeta' t^{\kappa'}ax + {\zeta'}^2t^{2\kappa'}\]
    and
    \[h'(x) =x^4 + a'x^3 + b'x^2 + t^\kappa a' x +  t^{2\kappa}. \]

    Then $h$ and $h'$ are the characteristic polynomials of matrices $\gamma$ and $\gamma'$ in $\Gf(\Qb)$.\footnote{For example, the matrix $\br{\begin{smallmatrix}
        0&0&-\sqrt{s}&0\\
        \sqrt{s}&0&-a&0\\
        0&0&0&\sqrt{s}\\
        0&\sqrt{s}&-b/\sqrt{s}&-a
    \end{smallmatrix}}$ is symplectic with respect to the matrix $J=\br{\begin{smallmatrix}
        	0 & 0 & 0 & 1\\
        	0 & 0 & 1 & 0\\
        	0 & -1 & 0 & 0\\
        	-1 & 0 & 0 & 0
        \end{smallmatrix}}$ and has characteristic polynomial $x^4 + ax^3 + bx^2 + asx +s^2$.} 
    Moreover, by construction, we have
    \[\simil(\gamma)^{\kappa/d} = (\zeta't^{\kappa'})^{k/d}=\zeta t^{\kappa\kappa'/d}\]
    and
    \[\zeta\simil(\gamma')^{\kappa'/d} = \zeta(t^{\kappa})^{\kappa'/d}=\zeta t^{\kappa\kappa'/d}\]
    Hence, $(\gamma,\gamma')$ is an element of $\Gkk^\zeta(\Qb)$ with 
    \[\phi(\gamma,\gamma') = P(\zeta't^{\kappa'},t^{\kappa},a,b,a',b')\ne 0.\]
    Thus, $\phi$ does not vanish on $\Gkk^\zeta$.
\end{proof}

\begin{proof}[Proof of \Cref{thm:b_p}]
    Let $\phi$ be the element
    \[\phi\:(\gamma,\gamma')\mapsto P(\simil(\gamma),\simil(\gamma'),\tr(\gamma), \tr\std(\gamma), \tr(\gamma'), \tr\std(\gamma'))\]
    of $\O(\Gkk)^{\Gkk}$. Then, by \Cref{lem:non-vanishing}, $\phi$ does not vanish on any connected component of $\Gkk$. The result follows from \Cref{thm:galois}.
\end{proof}

\section*{Acknowledgments}

The authors would like to thank Ralf Schmidt for helpful comments and corrections on an earlier version of this paper. We also thank Tobias Berger, Soumyadip Das, Manami Roy,  and Zhining Wei for helpful conversations. AK was supported by ANRF (DST, Govt of India)  under the Start-Up Research Grant SRG/2023/001202. AW was supported by an AMS-Simons travel grant.

    \bibliography{bibliography}
	\bibliographystyle{alpha}
	
\end{document}